\documentclass[11pt, reqno, english]{amsart}  
\usepackage[utf8]{inputenc}
\usepackage[T1]{fontenc}
\usepackage{amsmath,amsthm}
\usepackage{amsfonts,amssymb}
\usepackage{mathtools}  

\DeclarePairedDelimiter{\norm}{\lVert}{\rVert}
\DeclarePairedDelimiter{\abs}{\lvert}{\rvert}

\usepackage[colorlinks=true,urlcolor=blue,linkcolor=red,citecolor=magenta]{hyperref}
\usepackage{enumerate,paralist}
\usepackage{tikz-cd}
\usepackage[left=1in,right=1in,top=1in,bottom=1in]{geometry}
\usepackage{xcolor}
\usepackage{soul}
\usepackage{algorithm}
\usepackage{algpseudocode}
\usepackage{bbm}
\usepackage{comment}
\usepackage{appendix}
\usepackage{booktabs}
\usepackage{optidef}
\usepackage{subcaption}
\usepackage[capitalize,nameinlink]{cleveref}

\algrenewcommand\algorithmicrequire{\textbf{Input:}}
\algrenewcommand\algorithmicensure{\textbf{Output:}}

% Theorem Environments
\newtheoremstyle{myremark}
{7pt}{7pt}{}{}{\bf}{.}{.5em}{}
\theoremstyle{plain}
\newtheorem{theorem}{Theorem}[section]
\newtheorem{lemma}[theorem]{Lemma}
\newtheorem{corollary}[theorem]{Corollary}
\newtheorem{proposition}[theorem]{Proposition}

\theoremstyle{definition}
\newtheorem{definition}[theorem]{Definition}

\newtheorem{problem}[theorem]{Problem}

\makeatletter

\makeatother
\newcounter{parentnumber}

\theoremstyle{myremark}
\newtheorem{remark}[theorem]{Remark}
\newtheorem{example}[theorem]{Example}

% Math Operators and Notation

\DeclareMathOperator{\BD}{BD}
\DeclareMathOperator{\PD}{PD}
\DeclareMathOperator{\inj}{inj}

\DeclareMathOperator{\maximize}{maximize}
\DeclareMathOperator{\minimize}{minimize}

\newcommand{\eps}{\varepsilon}
\newcommand{\PDset}{\mathcal{D}}
\newcommand{\PDsetextended}{\tilde{\mathcal{D}}}
\newcommand{\gradedPDset}{\bar{\mathcal{D}}}
\newcommand{\gradedPDsetextended}{\check{\mathcal{D}}}
\newcommand{\DM}{\mathcal{M}}
\newcommand{\tran}{\top}
\DeclareMathOperator{\proj}{proj}

\newcommand{\N}{\mathbb{N}}

\newcommand{\R}{\mathbb{R}}

\newcommand{\Z}{\mathbb{Z}}

\linespread{1.2}

\begin{document}

\title[Topological feature selection for time series data]{Topological feature selection for time series data}

\author{Peter Bubenik}
\address[PB]{Department of Mathematics, University of Florida, Gainesville, FL 32611, USA}
\email{peter.bubenik@ufl.edu}

\author{Johnathan Bush}
\address[JB$^\ast$]{Department of  Mathematics and Statistics, James Madison University, Harrisonburg, VA 22807, USA}
\email{bush3je@jmu.edu}
\address{$^\ast$Corresponding author.}

\begin{abstract}
We use tools from applied topology for feature selection on time series data.
We develop a method for scoring the variables in a multivariate time series that reflects their contributions to the topological features of the corresponding point cloud. 
Our approach produces a piecewise-linear Lipschitz gradient path in the standard geometric simplex that starts at the barycenter, which weights the variables equally, and ends at the score.
Taking the mean of stochastic gradient descent results in a mean gradient path that satisfies a strong law of large numbers and central limit theorem.
Our theory is motivated by the analysis of the neuronal activities of the nematode \emph{C.~elegans}, and our method selects an informative subset of the neurons that optimizes the coordinated dynamics.
\end{abstract}

%\date{\today}
\maketitle

\section{Introduction}\label{sec:introduction}

Many physical and biological systems exhibit periodic or quasiperiodic dynamics driven by coordinated interactions among their components. 
These dynamics are often recorded as vector-valued time series, e.g., calcium imaging of neural populations or multi-sensor physiological recordings.   
In the spirit of dimensionality reduction, we consider the problem of assigning scores to the variables of these vector-valued time series based on their contribution to the global, coordinated dynamics. 
We refer to this problem as \emph{topological feature selection for time series}. 

Broadly, feature selection is a process aiming to identify the most relevant and informative features or variables from a given data set. 
A primary goal of feature selection is to reduce dimensionality by eliminating irrelevant or redundant attributes.
In our setting, we use persistent homology to measure a feature's relevancy and perform gradient descent to identify relevant features. 
We observe that in practice our method identifies sparse combinations of variables that align with known structure in synthetic and biological data. 

Our primary motivating example is calcium imaging data from the model organism
\emph{C.~elegans}, a nematode whose nervous system comprises just $302$ neurons.
The neural activity of \emph{C.~elegans} is known to evolve cyclically near a
low-dimensional manifold~\cite{kato2015global}, and a central question is which
neurons drive this coordinated behavior.
Our methods show that the coordinated dynamics recorded across a large portion of the \emph{C.\thinspace elegans} brain are well-represented by a small subset of neurons.

In more detail, we assume there is a possibly discontinuous function $f:\R \to \R^p$, underlying our data, which is usually unknown.
Let $t_1 \in \R$, $\tau > 0$, and let $n$ be a positive integer.
Let $T = \{t_i\}_{i=1}^n$, where $t_i = t_1 + (i-1)\tau$. 
We assume that our data consists of observations $g:T \to \R^p$ given by $g_j(t_i) = f_j(t_i) + \eps_{ij}$ for some unknown error term $\eps_{ij}$.
Letting $x_i = g(t_i)$, we have the time series $X = (x_1,\ldots,x_n)$ in $\R^p$.

Now consider an element $v$ in the standard geometric $(p-1)$-dimensional simplex, $\abs{\Delta^{p-1}}$, that is, $v \in \R^p$, $v_j \geq 0$ for all $j$, and $v_1 + \cdots + v_p = 1$.
From the times series $X$ we obtain the convex-combination time series $vX$ by scaling the $j$th coordinate of $x_i$ by $v_j$ for all $i,j$.
Let $\Psi(v)$ denote the persistence diagram (Section~\ref{sec:PD}) of the Vietoris-Rips complex~\cite{Virk:book} of the sliding window embedding  (Section~\ref{sec:sliding-windows})
of $vX$.
This diagram captures the topology of the sliding window embedding of $vX$; features with large lifetimes reflect robust geometric structure in the resulting point cloud.
Let $F$ be a real-valued function on the set of persistence diagrams.
We have the following optimization problem, which we call topological feature selection for time series (\cref{problem:tfs-time-series}).
    \begin{gather*}
        \text{maximize}_v \quad (F \circ \Psi)(v)\\
        \text{subject to} \quad v \in \abs{\Delta^{p-1}}
    \end{gather*}
Let $v^*\in \abs{\Delta^{p-1}}$ be a solution of our optimization algorithm.
We interpret each component $v_j^*$ as a score of the contribution of the $j$th components of the time series to the coordinated dynamics as quantified by persistent homology.

More generally, we address the problem of topological feature selection for an arbitrary finite collection of weights on a combinatorial simplex (\cref{problem:tfs-weights}), a special case of which is given by weights induced by symmetric matrices (\cref{problem:tfs-matrices}). 
We show that as these weights vary along affine paths, the associated persistence vineyards are piecewise linear (\cref{thm:PL-w-PD,thm:PL-M-PD}). 
From our analysis of persistence vineyards, we recover the $q$-Wasserstein stability of persistence diagrams (\cref{thm:stability-p}) for $1 \leq q \leq \infty$ and also prove some related results (Section \ref{sec:stability}).

For persistence-based objective functions that are piecewise linear, e.g., total persistence, we are able to compute gradients
explicitly and efficiently (Section~\ref{sec:tfs-pl})
and gradient descent provides a convergent path in the $(p-1)$-dimensional simplex.
For a much broader class of objective functions, projected stochastic subgradient descent likewise provides such convergent piecewise linear paths (Section~\ref{sec:tfs-sgd}).
We prove that for functionals on persistence diagrams that are Lipschitz with respect to a $q$-Wasserstein distance, evaluation of these functionals on persistence diagrams obtained from our gradient path is Lipschitz (\cref{cor:stability-FV-A,cor:stability-FV-3}).

Averaging our stochastic gradient paths or our gradient paths for which the observed data has noise added, produces mean gradient paths (Section~\ref{sec:mean-gradient-path}), which satisfy a strong law of large numbers (\cref{thm:slln}) and a central limit theorem (\cref{thm:clt}).
As a special case, our assigned scores satisfy a strong law of large numbers and central limit theorem.

Our analysis of sliding window embeddings of time series and linear-combination time series 
(\Cref{sec:time-series-sliding-windows}), which uses matrices and Hadamard products to express distance matrices for sliding window embeddings of linear-combination time series, may be of independent interest.
When using the $1$ norm, the distance matrix for the sliding window embedding of a multivariate time series is the sum of the distance matrices of the sliding window embeddings of the component time series. 
Furthermore, the distance matrix for the sliding window embedding is a convolution of the distance matrices for the sliding window embedding with a window length of one.

\subsection{Related work}\label{sec:related-work}

Foundational results regarding the persistent homology of sliding window embeddings were established by Perea and Harer~\cite{perea2015sliding}.
Persistent homology has been used to analyze time series data in many settings, including gene expression~\cite{dequeant2008comparison, perea2015sw1pers}, nematode locomotion and behavior~\cite{thomas2021topological}, periodic video~\cite{tralie2016high}, climate analysis~\cite{berwald2014automatic}, wheeze detection~\cite{emrani2014persistent}, and cryptocurrency variability~\cite{gidea2020topological}, for example. 
These works commonly employ sliding window embeddings of varying subsets of data to draw conclusions about the statistics of the resulting collection of persistence diagrams. 
In this paper, we focus instead on sliding window embeddings across the entirety of a fixed data set. 

In our setting, persistence vineyards~\cite{cohen2006vines}, which track the evolution of persistence diagrams along a parameterized family of filtrations, arise naturally as we vary linear combinations of time series. 
We are indebted to the foundational work of Cohen-Steiner, Edelsbrunner and Morozov~\cite{cohen2006vines}.
The proofs of our stability theorems that follow from our study of vineyards are essentially the same as those of Skraba and Turner~\cite{Skraba:2020} and Cohen-Steiner, Edelsbrunner and Morozov~\cite{cohen2006vines}.
An alternative approach to studying time series and persistence diagrams is given by path signatures~\cite{Giusti_2023}.

Our primary motivating example is neuronal data collected from  \emph{Caenorhabditis elegans}, a transparent roundworm with a simple nervous system comprising just $302$ neurons. 
Despite its simplicity, the \emph{C.\thinspace elegans} brain is complex enough to exhibit sensory modalities including a response to heat and odor. 
In particular, neurons in \emph{C.\thinspace elegans} are known to share information by engaging in coordinated dynamic activity that evolves cyclically near a low-dimensional manifold~\cite{kato2015global}. 
Notably, Kato et al.~\cite{kato2015global} use principal component analysis to reconstruct this activity, whereas we observe low-dimensional cyclic activity through a sliding window embedding.
We remark that persistent homology has been used to study the connectivity of the \emph{C.\thinspace elegans} nervous system~\cite{helm2020growing}, as well as to construct circular coordinates on the \emph{C.\thinspace elegans} neural activity phase space~\cite{blumberg2024subsamplingaligningaveragingcircular}.

Differentiation and the optimization of persistence-based functions is a topic of active research~\cite{bruel2020topology,carriere2021optimizing,gabrielsson2020topology,  leygonie2021framework,moor2020topological,MR4712290, MR4753555,  poulenard2018topological, pmlr-v235-scoccola24a}. 
We are particularly indebted to the work of Carri{\'e}re et al.~\cite{carriere2021optimizing} for showing how the work of Davis et al.~\cite{MR4056927} on stochastic subgradient descent applies to the optimization of functions of persistent homology and provides guarantees for convergence.
Leygonie, Oudot, and Tillmann~\cite{leygonie2021framework} also develop a framework for differential calculus on the space of persistence barcodes~\cite{carlsson2004persistence, barannikov1994framed}.

\subsection{Organization} 
In Section~\ref{sec:persistent-homology}, we summarize basic results of persistent homology in the setting of weights on an augmented combinatorial simplex.
In Section~\ref{sec:vineyards}, we prove that a piecewise linear (PL) curve of weights produces a PL vineyard. 
In addition, we show that it follows that persistence diagrams are stable for various choices of metrics. 
In Section~\ref{sec:time-series-sliding-windows}, we study time series and their sliding window embeddings. 
In Section~\ref{sec:feature-selection}, we make our notion of topological feature selection for time series data precise.
In Section~\ref{sec:optimization}, we define two gradient descent algorithms for this optimization problem and prove they result in Lipschitz paths that satisfy a strong law of large numbers a central limit theorem. 
Finally, we apply our method to synthetic and biological data in Section~\ref{sec:examples}. 

\subsection{Notation}

Here we give a brief glossary of the main notation we will introduce.
$\BD_k^\mathcal{A}(K,\trianglelefteq)$ is the birth-death matching in degree $k$ obtained by applying the persistence algorithm $\mathcal{A}$ to the simplex $K$ with a linear order $\trianglelefteq$ compatible with the face poset (Section~\ref{sec:PD}).
$\PD_k^\mathcal{A}(K,\trianglelefteq,w)$
is the persistence diagram in degree $k$ obtained by applying the persistence algorithm $\mathcal{A}$ to the simplex $K$ whose simplices have scalar values compatible with the face poset given by the weight $w$ and using the linear order $\trianglelefteq$ to break ties (Section~\ref{sec:PD}).
$W$ is the space of all weights (Section~\ref{sec:weight-space}).
$\mathcal{P}_\trianglelefteq$ is the partition of $W$ induced by a linear order $\trianglelefteq$ compatible with the face poset (Section~\ref{sec:weight-space}).
$\mathcal{M}$ is a set of $m \times m$ symmetric matrices (Section~\ref{sec:symmetric-matrices}).
$\PDset$ and $\gradedPDset$ are the sets of persistence diagrams and graded persistence diagrams, respectively (Section~\ref{sec:PD}).
$\PDsetextended$ and $\gradedPDsetextended$ are the sets of extended persistence diagrams and extended graded persistence diagrams, respectively (Section~\ref{sec:stability}).

\section{Persistent homology}\label{sec:persistent-homology}

In this section, we give basic results of persistent homology and introduce notation in the setting of weights on the augmented combinatorial simplex.

\subsection{Weights on the augmented combinatorial simplex}\label{sec:prelim}

To start, we define the augmented combinatorial simplex $K$ as well as weights on $K$.

For $m\geq 1$, let $[m]$ denote the linearly ordered set $\{1 < 2 < \cdots < m\}$. 
Let $K$ denote the power set of $[m]$.
The elements of $K$ are called \emph{simplices}, and the \emph{dimension} of a simplex $\sigma\in K$, denoted $\dim(\sigma)$, is given by $|\sigma|-1$.
We call $K$ the \emph{augmented combinatorial $(m-1)$-simplex}; it is an example of an augmented abstract simplicial complex~\cite{Goerss:2009}.
We write $\hat{K}$ to denote the \emph{combinatorial $(m-1)$-simplex}, $K \setminus \varnothing$. 
$\hat{K}$ is a simplicial complex and $K$ is an augmented simplicial complex.

The set $K$ has a partial order $\subseteq$ given by subset inclusion.
The poset $(K,\subseteq)$ is called the \emph{face poset} of $K$. 
The Hasse diagram of $(K,\subseteq)$ has directed edges given by ordered pairs of simplices $(\sigma,\tau)$ where $\sigma \subseteq \tau$ and $\dim(\tau) = \dim(\sigma) + 1$.
See \cref{fig:hasse-diagram}.
A \emph{linear extension} of $(K, \subseteq)$ is a total order $\trianglelefteq$ on $K$ such that $\sigma\subseteq \tau$ implies $\sigma\trianglelefteq\tau$. 
See \cref{fig:hasse-diagram}.

A \emph{weight} on $K$ is an order preserving map $w\colon (K,\subseteq)\to (\R, \leq )$. 
See \cref{fig:hasse-diagram}.
A weight $w$ on $K$ induces 
a total preorder $\preceq_w$ on $K$ given by $\sigma \preceq_w \tau$ if $w(\sigma) \leq w(\tau)$. 
Any linear extension $\trianglelefteq$ of the subset order on $K$ can be used to refine $\preceq_w$ to  a total order 
$\leq_{(w,\trianglelefteq)}$ on $K$
given by $\sigma \leq_{(w,\trianglelefteq)} \tau$ if either $w(\sigma) < w(\tau)$ or $w(\sigma) = w(\tau)$ and $\sigma \trianglelefteq \tau$,
i.e. we use $\trianglelefteq$ to break ties.
See \cref{fig:hasse-diagram}.

\begin{figure}[htb!]
    \centering
    (a)
    \begin{tikzcd}[column sep=tiny]
        & 123\\
        12 \ar[ur] & 13 \ar[u] & 23 \ar[ul] \\
        1 \ar[u] \ar[ur] & 2 \ar[ul] \ar[ur] & 3 \ar[u] \ar[ul] \\
        & \emptyset \ar[ul] \ar[u] \ar[ur]
    \end{tikzcd} 
    (b)
    \begin{tikzcd}[column sep=tiny, row sep=tiny]
        & 123\\
        & & 23 \ar[ul]\\
        & 13 \ar[ur]\\
        12 \ar[ur] \\
        & & 3 \ar[ull]\\
        & 2 \ar[ur]\\
        1 \ar[ur]\\
        & \emptyset \ar[ul]
    \end{tikzcd} 
    (c)
    \begin{tikzcd}[column sep=tiny, row sep=small]
        & & & w\\
        12 & 123 & 23 & 4\\
        & 13 & & 3\\ 
        1 
        & 2 & & 2\\
        & & 3 & 1\\ 
        & \emptyset & & 0
    \end{tikzcd} 
    (d)
    \begin{tikzcd}[column sep=tiny, row sep=tiny]
        & 123\\
        & & 23 \ar[ul]\\
        12 \ar[urr]\\
        & 13 \ar[ul] \\
        & 2 \ar[u]\\
        1 \ar[ur]\\
        & & 3 \ar[ull]\\
        & \emptyset \ar[ur]
    \end{tikzcd}

    \caption{Orders and weights for the augmented combinatorial $2$-simplex. 
    (a) The Hasse diagram of the face poset $\subseteq$.
    (b) The Hasse diagram of a linear extension $\trianglelefteq$ of $\subseteq$.
    (c) The weight $w$ given by $w(\emptyset) = 0$, $w(3) = 1$, $w(1) = w(2) = 2$, $w(13) = 3$, and $w(12) = w(23) = w(123) = 4$.
    (d) The Hasse diagram of the total order $\leq_{(w,\trianglelefteq)}$ refining $\preceq_w$ using $\trianglelefteq$.}
    \label{fig:hasse-diagram}
\end{figure}

\begin{lemma} \label{lem:weight-order}
    Let $w$ be a weight on $K$, and let $\trianglelefteq$ be a linear extension of the subset order on $K$. Let $\sigma,\tau \in K$.
    Then $\sigma \subseteq \tau$ implies that $\sigma \leq_{(w,\trianglelefteq)} \tau$ 
    and $\sigma \leq_{(w,\trianglelefteq)} \tau$ implies that $\sigma \preceq_w \tau$.
    In particular, 
    $\leq_{(w, \trianglelefteq)}$ is a linear extension of the subset order. 
\end{lemma}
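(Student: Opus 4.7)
The plan is to unwind the definitions directly, handling the two implications separately and using a case split on whether $w(\sigma) = w(\tau)$ or $w(\sigma) < w(\tau)$.

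For the first implication, I would assume $\sigma \subseteq \tau$. Since $w$ is by definition an order preserving map $(K,\subseteq)\to(\R,\leq)$, I immediately get $w(\sigma) \leq w(\tau)$. I then split into two cases. If $w(\sigma) < w(\tau)$, the first clause in the definition of $\leq_{(w,\trianglelefteq)}$ is satisfied, so $\sigma \leq_{(w,\trianglelefteq)} \tau$. If $w(\sigma) = w(\tau)$, I need the second clause, which requires $\sigma \trianglelefteq \tau$; this follows from the hypothesis $\sigma \subseteq \tau$ together with the fact that $\trianglelefteq$ is a linear extension of $\subseteq$.

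For the second implication, I would assume $\sigma \leq_{(w,\trianglelefteq)} \tau$ and again split into the two cases provided by the definition. In both cases ($w(\sigma) < w(\tau)$, or $w(\sigma) = w(\tau)$ with $\sigma \trianglelefteq \tau$), one has $w(\sigma) \leq w(\tau)$, which is exactly the definition of $\sigma \preceq_w \tau$.

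I do not anticipate any genuine obstacle here: the statement is a bookkeeping lemma that records compatibility among the three orders ($\subseteq$, $\preceq_w$, and $\leq_{(w,\trianglelefteq)}$), and every step is forced by the definitions. The only thing to be careful about is not to conflate the strict inequality appearing in the definition of $\leq_{(w,\trianglelefteq)}$ with the weak inequality in $\preceq_w$, which is why the case split on equality of weights is needed in the first implication.
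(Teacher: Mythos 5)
Your proof is correct and is exactly the definitional unwinding the paper intends: the lemma is stated without proof precisely because it follows immediately from the definitions of $\preceq_w$, $\leq_{(w,\trianglelefteq)}$, and the fact that $\trianglelefteq$ extends $\subseteq$. Your case split on $w(\sigma)<w(\tau)$ versus $w(\sigma)=w(\tau)$ is the right way to handle the first implication, and the second implication is handled correctly as well.
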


\subsubsection{Weights induced by symmetric matrices} \label{sec:symmetric-matrices}

An important special case arises from symmetric matrices, such as distance matrices.

Let $K$ be the augmented combinatorial $(m-1)$-simplex. 
Consider a symmetric matrix $M=(M_{ij})_{1\leq i,j\leq m}$ 
such that $M_{i i} \leq M_{ij}$ for all $i,j$. 
This defines a weight $w_M$ on $K$ as follows. 
For each $\sigma\in K$, define 
\begin{equation} \label{eq:wM}
    w_M(\sigma)=\begin{cases} \max\limits_{i, j\in \sigma} \, M_{ij} & \text{ if }  \sigma\neq \varnothing \\
\min\limits_{1 \leq i \leq m} \, M_{ii} 
& \text{ if }  \sigma = \varnothing
\end{cases}
\end{equation}
Note that
$\sigma\subseteq \tau$ implies $w_M(\sigma)\leq w_M(\tau)$. 

\begin{remark}
    A symmetric $m \times m$ matrix $M$ with $M_{ii} \leq M_{ij}$ for all $i,j$ may be viewed as a vertex and edge weighted complete graph $(K_m,w)$ for which the weights induce a filtration of the graph.
\end{remark}

\begin{example}
\textnormal{
Consider a finite metric space $(X,d)$ with totally ordered elements $\{x_1< x_2<\cdots <x_m\}$.
Then we have a corresponding distance matrix $(d(x_i,x_j))_{1 \leq i,j \leq m}$ 
and corresponding weight $w_d$.}
\end{example}

\begin{remark}
    Given any $m \times m$ matrix $M$, equation \eqref{eq:wM} defines a weight on $K$. However, this weight equals the weight of the matrix $M'$ given by $M'_{ij} = \max \{ M_{ii}, M_{jj}, M_{ij}, M_{ji}\}$.
    The matrix $M'$ is symmetric and satisfies $M'_{ii} \leq M'_{ij}$ for all $i,j$.
\end{remark}

\subsection{Persistence diagrams}\label{sec:PD} 

We now discuss persistence diagrams arising from a weight on the augmented combinatorial simplex.

Let $K$ denote the augmented combinatorial $(m-1)$-simplex.
The $f$-vector of $K$ is given by $(f_{-1},f_0,\ldots,f_{m-1})$, where $f_k = \binom{m}{k+1}$ equals the number of $k$-simplices in $K$.
For $-1 \leq k \leq m-1$, let $f_k^+ = \sum_{j=-1}^k (-1)^{k-j} f_j$.
Since the combinatorial $(m-1)$-simplex $\hat{K}$ is contractible, the reduced simplicial homology of $\hat{K}$ is zero.
It follows that for $K^{(k)}$, the $k$-skeleton of $K$, we have that the $k$-th reduced Betti number is given by $\overline{\beta}_k(K^{(k)}) = f_k^+$.

For what follows, let $\leq$ be any linear extension of the subset order of $K$. 
There is a corresponding square matrix $D$ with entries in $\Z/2\Z$ whose rows and columns are indexed by the simplices of $K$, ordered by $\leq$, such that $D_{\sigma \tau}$ equals $1$ if $\sigma \subseteq \tau$ and $\dim \tau = \dim \sigma + 1$ and equals $0$ otherwise.
$D$ is called the \emph{boundary matrix} of $(K,\leq)$.
A square matrix $R = (R_{ij})$ is called \emph{reduced} if each nonzero column $R_{\bullet j}$ has a unique lowest nonzero entry.
A reduced matrix produces a matching of its rows and columns given by matching nonzero columns with the rows corresponding to their lowest nonzero entry.
A \emph{persistence algorithm} is a row reduction algorithm consisting of column addition operations that add a column of a matrix to a column to the right, that, taking a boundary matrix as input, produce a reduced matrix~\cite{cohen2006vines}. 

Let $\mathcal{A}$ be a persistence algorithm.
Applying $\mathcal{A}$ to the boundary matrix of $(K,\leq)$ produces a matching of the complete graph on
the simplices of $K$ consisting of pairs $(\sigma,\tau)$ with $\dim \tau = \dim \sigma +1$,
which we call a \emph{birth-death matching}
and which we denote $\BD^\mathcal{A}(K,\leq)$.
Elements $(\sigma,\tau)$ of $\BD^\mathcal{A}(K,\leq)$ are called \emph{birth-death pairs}, and $\sigma$ and $\tau$ are called \emph{birth} and \emph{death} simplices, respectively.
Let $\BD_k^\mathcal{A}(K,\leq) = \{(\sigma,\tau) \in \BD^\mathcal{A}(K,\leq)\ | \ \dim(\sigma)=k\}$. 
See \cref{ex:bd}.

\begin{figure}[htb!]
    \centering
    (a)
    \begin{tikzcd}[column sep=tiny, row sep=tiny]
        & 123\\
        12 \ar[ur] \\
        & 13 \ar[ul]\\
        & & 23 \ar[ul]\\
        1 \ar[urr]\\
        & 2 \ar[ul]\\
        & & 3 \ar[ul]\\
        & \emptyset \ar[ur]
    \end{tikzcd} 
    (b)
    \begin{tikzcd}[column sep=tiny, row sep=tiny]
        & 123\\
        12 \ar[ur]\\
        & & 23 \ar[ull]\\
        & 13 \ar[ur] \\
        1 \ar[ur]\\
        & 2 \ar[ul]\\
        & & 3 \ar[ul]\\
        & \emptyset \ar[ur]
    \end{tikzcd}
    (c)
    \begin{tikzpicture}[scale=1]
        \draw[blue, thick] (0,0) -- (1,0) -- (0.5,0.866) -- cycle;
        \filldraw[blue] (0,0) circle (1.5pt) node[black,anchor=east]{$1$};
        \filldraw[blue] (1,0) circle (1.5pt) node[black,anchor=west]{$2$};
        \filldraw[blue] (0.5,0.866) circle (1.5pt) node[black,anchor=south]{$3$};
        \node [align=center,color=black] at (0.5,-0.2) {$12$};
        \node [align=center,color=black] at (0,0.5) {$13$};
        \node [align=center,color=black] at (1,0.5) {$23$};
        \node [align=center,color=black] at (0.5,0.25) {$123$};
    \end{tikzpicture}
    (d)
    \begin{tikzpicture}[scale=1]
        \draw[blue, thick] (0,0) -- (1,0) -- (0.5,0.866) -- cycle;
        \filldraw[blue] (0,0) circle (1.5pt) node[black,anchor=east]{$2$};
        \filldraw[blue] (1,0) circle (1.5pt) node[black,anchor=west]{$3$};
        \filldraw[blue] (0.5,0.866) circle (1.5pt) node[black,anchor=south]{$1$};
        \node [align=center,color=black] at (0.5,-0.2) {$5$};
        \node [align=center,color=black] at (0.1,0.5) {$4$};
        \node [align=center,color=black] at (0.9,0.5) {$6$};
        \node [align=center,color=black] at (0.5,0.2887) {$7$};
    \end{tikzpicture}
    (e)
    \begin{tikzpicture}[scale=1]
        \draw[blue, thick] (0,0) -- (1,0) -- (0.5,0.866) -- cycle;
        \filldraw[blue] (0,0) circle (1.5pt) node[black,anchor=east]{$3$};
        \filldraw[blue] (1,0) circle (1.5pt) node[black,anchor=west]{$2$};
        \filldraw[blue] (0.5,0.866) circle (1.5pt) node[black,anchor=south]{$1$};
        \node [align=center,color=black] at (0.5,-0.2) {$6$};
        \node [align=center,color=black] at (0.1,0.5) {$5$};
        \node [align=center,color=black] at (0.9,0.5) {$4$};
        \node [align=center,color=black] at (0.5,0.2887) {$7$};
    \end{tikzpicture}
    \caption{Orders and weights for the augmented combinatorial $2$-simplex, a continuation of \cref{fig:hasse-diagram}. 
    (a) The Hasse diagram of a linear extension $\trianglelefteq'$ of $\subseteq$.
    (b) The Hasse diagram of the total order $\leq_{(w,\trianglelefteq')}$ refining $\preceq_w$ using $\trianglelefteq'$.
    (c) The combinatorial $2$-simplex.
    (d) The total order given by $\leq_{(w,\trianglelefteq)}$.
    (e) The total order given by $\leq_{(w,\trianglelefteq')}$.
    }
    \label{fig:weighted-triangles}
\end{figure}

\begin{example} \label{ex:bd}
For $K$ the augmented combinatorial $2$-simplex, the birth-death matching corresponding to the total orders $\leq_{(w,\trianglelefteq)}$ and $\leq_{(w,\trianglelefteq')}$ in \cref{fig:hasse-diagram,fig:weighted-triangles}.

\[
\begin{array}{c|c|c|c}
k & -1 & 0 & 1 \\  \hline
\BD_k^\mathcal{A}(K,\leq_{(w,\trianglelefteq)})
&
(\emptyset,3)
&
(1, 13), (2, 12)
&
(23, 123)
\\ \hline
\BD_k^\mathcal{A}(K,\leq_{(w,\trianglelefteq')})
&
(\emptyset,3)
&
(1, 13), (2, 23)
&
(12, 123)
\end{array}
\]
\end{example}

\begin{lemma}[{\cite[Pairing Uniqueness Lemma]{cohen2006vines}}] \label{lem:ph}
    The birth-death matching $\BD^\mathcal{A}(K,\trianglelefteq)$ is independent of the choice of persistence algorithm $\mathcal{A}$.
\end{lemma}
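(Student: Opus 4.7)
The plan is to encode the birth-death matching as rank data of certain submatrices of the boundary matrix $D$ itself; since those ranks are oblivious to which particular persistence algorithm is applied, so is the matching. Fix the linear extension $\trianglelefteq$, set $N = |K|$, and enumerate $\sigma_1 \triangleleft \cdots \triangleleft \sigma_N$. For any $N \times N$ matrix $M$ over $\Z/2\Z$ and any indices $1 \leq i, j \leq N$, let $r_M(i,j)$ denote the rank of the submatrix of $M$ on rows $\{i, i+1, \ldots, N\}$ and columns $\{1, 2, \ldots, j\}$.

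First I would show that $r_M(i,j)$ is invariant under any left-to-right column addition. Indeed, if such an operation replaces column $b$ by the sum of column $a$ and column $b$ with $a < b$, then either $b > j$, in which case the submatrix is unchanged, or $b \leq j$ (forcing $a < b \leq j$), in which case the operation is an elementary column operation on the submatrix and so preserves its rank. Consequently, if a persistence algorithm $\mathcal{A}$ transforms $D$ into a reduced matrix $R$, then $r_R(i,j) = r_D(i,j)$ for every $(i,j)$.

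Next I would read the matching $\BD^\mathcal{A}(K,\trianglelefteq)$ off of $r_R$. Since $R$ is reduced, the lowest nonzero positions $\ell_c := \mathrm{low}(R_{\bullet, c})$ of the nonzero columns are all distinct, and any column with $\ell_c < i$ vanishes in rows $\geq i$ by definition of the low. This yields the counting identity
\[
r_R(i,j) \;=\; \#\{\, c \leq j : \text{column $c$ of $R$ is nonzero and } \ell_c \geq i \,\}.
\]
Differencing in $i$ then detects whether some $c \leq j$ has $\ell_c = i$, and differencing further in $j$ forces $c = j$. Hence $(\sigma_i, \sigma_j)$ lies in $\BD^\mathcal{A}(K,\trianglelefteq)$ if and only if
\[
r_R(i,j) - r_R(i+1,j) - r_R(i,j-1) + r_R(i+1,j-1) \;=\; 1,
\]
an expression that depends only on $D$ by the previous step, proving the lemma.

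The main obstacle is the counting identity for $r_R(i,j)$, where one must verify that the columns with $\ell_c \geq i$ remain linearly independent after restriction to rows $\geq i$; this follows from the distinctness of their lows via a standard leading-entry argument over $\Z/2\Z$. The rest is elementary bookkeeping, and the boundary-dimension condition $\dim(\sigma_j) = \dim(\sigma_i) + 1$ for each pair is inherited from the dimensional structure of $D$ and propagates through the reduction.
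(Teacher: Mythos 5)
Your proof is correct. The paper does not prove this lemma itself but cites it as the Pairing Uniqueness Lemma of Cohen-Steiner, Edelsbrunner, and Morozov, and your argument --- characterizing the lows of any reduced matrix via the double difference of the ranks $r_D(i,j)$ of lower-left submatrices, which are invariant under left-to-right column additions --- is essentially the standard proof given in that reference.
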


Recall that a weight $w:(K,\subseteq) \to (\R,\leq)$ induces a total preorder $\preceq_w$ on $K$, and
we may use $\trianglelefteq$ to refine this preorder to a linear extension $\leq_{(w, \trianglelefteq)}$ of the subset order.
Applying a persistence algorithm to this order,
we obtain a birth-death matching $\BD_k^\mathcal{A}\left(K,\leq_{(w,\trianglelefteq)}\right)$. 
Note that $\leq_{(w,\trianglelefteq)}$ depends on both $w$ and $\trianglelefteq$.
Let $\R^2_\leq = \{(x,y) \in \R^2~|~x \leq y\}$.
For $-1 \leq k \leq m-1$, the \emph{degree-$k$ persistence diagram} of $(K,\trianglelefteq,w)$ is defined to be
the formal sum (i.e.\ multiset) 
\begin{equation} \label{eq:pd}
    \PD_k^\mathcal{A}(K,\trianglelefteq,w) = \sum_{(\sigma,\tau) \in \BD_k^\mathcal{A}\left(K,\leq_{(w,\trianglelefteq)}\right)} (w(\sigma),w(\tau)).
\end{equation}

See \cref{ex:pd}.
The number of terms in the sum~\eqref{eq:pd} is called the \emph{cardinality} of the persistence diagram.

\begin{example} \label{ex:pd}

For $K$ the augmented combinatorial $2$-simplex, the degree-$k$ persistence diagrams corresponding to $w$ and to the total orders $\trianglelefteq$ and $\trianglelefteq'$ in \cref{fig:hasse-diagram,fig:weighted-triangles}.
\[
\begin{array}{c|c|c|c}
k & -1 & 0 & 1 \\  \hline
\PD_k^\mathcal{A}(K,\trianglelefteq,w)
&
(0,1)
&
(2,3) + (2,4)
&
(4,4)
\\ \hline
\PD_k^\mathcal{A}(K,\trianglelefteq',w)
&
(0,1)
&
(2,3) + (2,4)
&
(4,4)
\end{array}
\]
\end{example}

The formal sum $\PD_k^\mathcal{A}(K,\trianglelefteq,w)$ is an element of the free commutative monoid (the set of formal sums together with the operation given by addition) on the set $\R^2_\leq$, which we denote $D(\R^2_\leq)$.
We refer to $w(\sigma)$ as the \emph{birth}, $w(\tau)$ as the \emph{death}, and $w(\tau)-w(\sigma)$ as the \emph{lifetime} or \emph{persistence} of the birth-death pair $(\sigma,\tau)$.

For $a \in \R$, $K$ has a subcomplex $K_a$ consisting of those simplices $\sigma$ of $K$ with $w(\sigma) \leq a$.
For $a \leq b$ and $k \in \Z$, we have $K_a \subseteq K_b$, 
which induces a map on reduced simplicial homology in degree $k$ with coefficients in $\Z/2\Z$.
Let $\overline{\beta}_k^{ab}(K,w)$ denote the rank of this linear map, called the \emph{reduced persistent Betti number}.

The following is a central result of persistent homology.
For $a \leq b$, let $Q_{ab} = \{(x,y) \in \R^2 \ | \ x \leq a \leq b \leq y\}$.
We may interpret elements of $D(\R^2_{\leq})$ as integer-valued functions on $\R^2_{\leq}$ with finite support.
Furthermore, they may be viewed as discrete measures on $\R^2_{\leq}$.
Then $\PD_k^\mathcal{A}(K,\trianglelefteq,w) (Q_{ab})= \sum_{(x,y) \in Q_{ab}} \PD_k^\mathcal{A}(K,\trianglelefteq,w)(x,y)$,
where this sum is finite by definition.

\begin{lemma}[\cite{elz:tPaS}]
    $\PD_k^\mathcal{A}(K,\trianglelefteq,w) (Q_{ab}) = \overline{\beta}_k^{ab}(K,w)$ for any $a\leq b$. 
\end{lemma}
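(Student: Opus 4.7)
The plan is to compute the right-hand side using the reduction algorithm $\mathcal{A}$ and match the resulting bookkeeping to the definition of $\PD_k^\mathcal{A}$ in \eqref{eq:pd}.

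First I would apply rank--nullity to the map $\iota$ induced by the inclusion $K_a \hookrightarrow K_b$ on reduced homology in degree $k$: then $\overline{\beta}_k^{ab}(K,w) = \overline{\beta}_k(K_a) - \dim \ker \iota$, where $\ker \iota$ consists of reduced $k$-cycle classes supported in $K_a$ that bound in $K_b$. Next, apply $\mathcal{A}$ to the boundary matrix $D$ of $(K,\leq_{(w,\trianglelefteq)})$ to obtain a reduced matrix $R$, and call a simplex \emph{positive} if its column of $R$ vanishes and \emph{negative} otherwise. By \cref{lem:weight-order}, each subcomplex $K_c$ is an initial segment of $\leq_{(w,\trianglelefteq)}$, so truncating $R$ to the simplices of $K_c$ still yields a reduced matrix compatible with the boundary map of $K_c$.

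The structural content of the persistence algorithm then provides two facts: \emph{(i)} $\overline{\beta}_k(K_c)$ equals the number of positive $k$-simplices $\sigma \in K_c$ whose partner under $\BD^\mathcal{A}$ (if any) lies outside $K_c$; and \emph{(ii)} the class born at a positive $\sigma \in K_a$ lies in $\ker \iota$ if and only if its partner $\tau$ lies in $K_b$. By \cref{lem:ph}, both assertions depend only on $\BD^\mathcal{A}$, not on the particular reduction. Combining \emph{(i)} and \emph{(ii)}, $\overline{\beta}_k^{ab}(K,w)$ equals the number of pairs $(\sigma,\tau) \in \BD_k^\mathcal{A}(K,\leq_{(w,\trianglelefteq)})$ with $\sigma \in K_a$ and $\tau \notin K_b$, together with any unpaired positive $k$-simplex in $K_a$. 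Because $K$ is the full augmented combinatorial simplex and hence has vanishing reduced homology, every positive simplex is paired inside $K$, eliminating the unpaired terms. The remaining count is exactly $\PD_k^\mathcal{A}(K,\trianglelefteq,w)\, Q_{ab}$ by \eqref{eq:pd}.

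The main obstacle is establishing the structural facts \emph{(i)} and \emph{(ii)}, which encode the correctness of the reduction algorithm. The standard argument inducts over the filtration: each newly inserted simplex $\tau$ either makes its reduced column vanish (in which case it creates a new positive generator) or has a nonzero lowest entry in some row $\sigma$ (in which case $\tau$ kills the cycle corresponding to the positive partner $\sigma$ determined by that entry). \cref{lem:ph} then ensures this bookkeeping is independent of the algorithmic choices in $\mathcal{A}$, so the final count is an invariant of $(K,\trianglelefteq,w)$.
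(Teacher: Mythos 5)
The paper offers no proof of this lemma at all: it is imported verbatim from Edelsbrunner--Letscher--Zomorodian via the citation, and the surrounding text only uses it to deduce \cref{thm:ph}. So there is no in-paper argument to compare against; what you have written is a reconstruction of the standard proof from that reference, and structurally it is the right one. Your three ingredients --- rank--nullity applied to $\iota\colon \overline{H}_k(K_a)\to \overline{H}_k(K_b)$, the observation that each sublevel set $K_c$ is an initial segment of $\leq_{(w,\trianglelefteq)}$ so that truncating the reduced matrix is legitimate, and the positive/negative simplex bookkeeping giving facts \emph{(i)} and \emph{(ii)} --- are exactly the content of the classical argument, and your use of the perfect matching (every positive simplex is paired because $\hat{K}$ is contractible) is consistent with how the paper itself handles the augmented simplex. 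Facts \emph{(i)} and \emph{(ii)} are asserted rather than fully proved, but you identify them correctly as the inductive core of the ELZ correctness argument, which is an acceptable level of detail for a cited classical result.

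One concrete point deserves attention in your final identification step. Your argument establishes that $\overline{\beta}_k^{ab}(K,w)$ equals the number of pairs $(\sigma,\tau)$ with $\sigma\in K_a$ and $\tau\notin K_b$, i.e.\ with $w(\sigma)\leq a$ and $w(\tau)>b$ (a pair with $w(\tau)=b$ has $\tau\in K_b$, so the class it kills already vanishes in $K_b$ and contributes nothing to the rank). The paper's quadrant $Q_{ab}=\{(x,y)\mid x\leq a\leq b\leq y\}$ is closed at $y=b$, so $\PD_k^\mathcal{A}(K,\trianglelefteq,w)\,Q_{ab}$ counts those boundary points as well. The two quantities therefore agree only when no death value equals $b$ exactly; the classical statement uses the half-open condition $y>b$. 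This is a convention slip in the paper's formulation of $Q_{ab}$ rather than an error in your reasoning, but your closing sentence (``the remaining count is exactly $\PD_k^\mathcal{A}(K,\trianglelefteq,w)\,Q_{ab}$'') silently equates the two; you should either flag the discrepancy or restrict to generic $b$.
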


We have the following corollary.

\begin{corollary}
     \label{thm:ph}
    The persistence diagram
    $\PD_k^\mathcal{A}(K,\trianglelefteq,w)$ 
    is independent of both the total order $\trianglelefteq$ and the choice of persistence algorithm $\mathcal{A}$.
    We henceforth denote it by $\PD_k(K,w)$.
\end{corollary}

\begin{example}
\textnormal{
    Consider a finite ordered metric space $(X,d)$ of cardinality $m$. 
Then the degree-$k$ \emph{Vietoris--Rips} persistence diagram of $X$,
$\PD_k(K,w_d)$, is independent of the order on the metric space}.
\end{example}

Since $\hat{K}$ is contractible, its reduced simplicial homology is zero.
Therefore applying a persistence algorithm to the boundary matrix of $(K,\trianglelefteq)$ produces a perfect matching.
We have the following.

\begin{lemma} \label{lem:cardinality-BDk}
    The cardinality of $\BD_k^\mathcal{A}(K,\trianglelefteq)$ is given by $f_k^+ = \sum_{j=-1}^k (-1)^{k-j} f_j$, which is independent of both the total order $\trianglelefteq$ and the choice of persistence algorithm $\mathcal{A}$.
\end{lemma}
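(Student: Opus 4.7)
The plan is to exploit the two structural facts that are essentially already established in the excerpt: the matching $\BD^{\mathcal{A}}(K,\trianglelefteq)$ is \emph{perfect} (stated just before the lemma, as a consequence of the contractibility of $\hat{K}$ and zero reduced homology), and every birth--death pair $(\sigma,\tau)$ satisfies $\dim(\tau) = \dim(\sigma) + 1$ (which is a structural property: the boundary matrix $D$ has $D_{\sigma\tau} \neq 0$ only when $\dim(\tau) = \dim(\sigma) + 1$, and column operations preserve this block-biadjacency, so the lowest nonzero entry of any nonzero column in block $k+1$ lies in block $k$). From these two facts the result is a one-line induction.

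More precisely, for $-1 \leq k \leq m-1$, let $b_k = |\BD_k^{\mathcal{A}}(K,\trianglelefteq)|$ denote the number of birth simplices of dimension $k$, and let $d_k$ denote the number of death simplices of dimension $k$. Because the matching is perfect, every $k$-simplex is exactly one of the two, giving $b_k + d_k = f_k$. Because each pair crosses consecutive dimensions with the lower-dimensional simplex as the birth, we have $d_{k+1} = b_k$. For the base case, the unique $(-1)$-simplex $\varnothing$ cannot be the death of any pair (there is no $(-2)$-simplex for it to be paired with), so $b_{-1} = f_{-1} = 1$ and $d_{-1} = 0$. Combining the two relations yields the recursion
\begin{equation*}
    b_k = f_k - d_k = f_k - b_{k-1},
\end{equation*}
which unrolls to $b_k = f_k - f_{k-1} + f_{k-2} - \cdots \pm f_{-1} = \sum_{j=-1}^k (-1)^{k-j} f_j = f_k^+$.

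Since this expression for $b_k$ involves only the $f$-vector of $K$, the cardinality $|\BD_k^{\mathcal{A}}(K,\trianglelefteq)|$ depends neither on the linear extension $\trianglelefteq$ nor on the persistence algorithm $\mathcal{A}$. I do not anticipate any real obstacle: the only thing to be careful about is making the dimension-increment property precise (one could instead invoke \cref{lem:ph} to identify $b_k$ with the rank of $\partial_{k+1}$, but the combinatorial recursion above is self-contained and avoids any appeal to chain complexes beyond what is already used in the paragraph preceding the statement).
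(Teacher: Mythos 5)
Your proof is correct, and it matches the paper's (largely implicit) argument: the paper states the lemma without a separate proof, relying on the immediately preceding observations that contractibility of $\hat{K}$ forces a perfect matching and that $f_k^+$ is exactly the alternating sum computing $\overline{\beta}_k(K^{(k)})$. Your counting recursion $b_k = f_k - b_{k-1}$ with $b_{-1} = f_{-1} = 1$ simply makes that implicit bookkeeping explicit, and your closing remark is the right one --- the dimension-increment property of pairs is already built into the paper's definition of a birth-death matching (and is safe to assume by \cref{lem:ph}), so nothing is missing.
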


We call elements of $D(\R^2_\leq)$ \emph{persistence diagrams} and 
let $\PDset$ denote the set of persistence diagrams.
It has a family of metrics called \emph{Wasserstein distances}, $W_q$, depending on $1 \leq q \leq \infty$ and a choice of metric on $\R^2_\leq$~\cite{csehm:lipschitz,cohen2005stability}, which we now define.
Following Skraba and Turner~\cite{Skraba:2020}, 
for the $q$-Wasserstein distance we will use the metric on $\R^2_\leq$ induced by the $q$-norm on $\R^2$.

Consider persistence diagrams $\sum_{i=1}^m (b_i,d_i)$ and $\sum_{j=1}^n (b'_j,d'_j)$.
Given $A \subseteq \{1,\ldots,m\}$
and $f:A \to \{1,\ldots,n\}$,
let $B = \{1,\ldots,m\} \setminus A$ and 
let $C = \{1,\ldots,n\} \setminus f(A)$.
Let
\begin{multline} \label{eq:Wasserstein-distance}
    W_q\left(\sum_{i=1}^m (b_i,d_i), \sum_{j=1}^n (b'_j, d'_j)\right) = \min
    \Bigg\lVert \Bigg( 
    \left( \norm{ (b_i,d_i) - (b'_{f(i)},d'_{f(i)}) }_q \right)_{i \in A},\\
    \left( \norm{ (b_i,d_i) - \bigg(\frac{b_i+d_i}{2},\frac{b_i+d_i}{2}\bigg) }_q \right)_{i \in B},
    \left( \norm{ (b'_j,d'_j) - \bigg(\frac{b'_j+d'_j}{2},\frac{b'_j+d'_j}{2}\bigg) }_q \right)_{j \in C}
    \Bigg) \Bigg\rVert_q,
\end{multline}
where the minimum is taken over all
$A \subseteq \{1,\ldots,m\}$ and all injective maps
$f:A \to \{1,\ldots,n\}$.

It is sometimes convenient to combine the persistence diagrams $\PD_k(K,w)$ for all $k$.
Define $\PD(K,w) \in D(\Z \times \R^2_\leq)$ by $\PD(K,w)(k,x,y) = \PD_k(K,w)(x,y)$,
where $\PD_k(K,w)=0$ for $k < -1$ and $k > m-1$.
We call elements of $D(\Z \times \R^2_\leq)$ \emph{graded persistence diagrams} and let $\gradedPDset$ denote the set of graded persistence diagrams.
For $1 \leq q \leq \infty$, we will extend the $q$-Wasserstein distance from $\PDset$ to $\gradedPDset$
by taking the $q$-norm of the $q$-Wasserstein distances for each $k$ (which are only possibly-nonzero for $-1 \leq k \leq m-1$).

\section{PL vineyards}
\label{sec:vineyards}

In this section we prove that the persistence diagrams of a PL curve of weights has the structure of what we call a PL vineyard. 

\subsection{The space of weights} \label{sec:weight-space}

To start, we consider the structure of the space of weights on the augmented combinatorial simplex.

Let $K$ denote the augmented combinatorial $(m-1)$-simplex.
Recall that a weight on $K$ is an order preserving map $w: (K,\subseteq) \to (\R,\leq)$ and that $\abs{K} = 2^m$.
Let $W\subset \R^{2^m}$ denote the set of all weights on $K$, which we call the \emph{weight space} of $K$.
Then $W$ is a $2^m$-dimensional polyhedron (a finite intersection of closed
half-spaces) given by the intersection of the closed half-spaces given by $w(\sigma) \leq w(\tau)$, where $\sigma\subseteq \tau \in K$.

Let $W_{\inj}$ denote the subset of $W$ consisting of weights that are injective, which we call the \emph{injective weight space}.
Then any $w \in W_{\inj}$ determines a total order $\leq_w$ on $K$ given by $\sigma \leq_w \tau$ if $w(\sigma) \leq w(\tau)$.
Say that $w,w' \in W_{\inj}$ are equivalent if they induce the same total order on $K$.
This equivalence relation determines a partition $\mathcal{P}_{\inj}$ of $W_{\inj}$, which we call the \emph{order partition} of $W_{\inj}$.
Note that the elements of $\mathcal{P}_{\inj}$ correspond to linear extensions of the subset order on $K$.

We wish to extend this partition of $W_{\inj}$ to a partition of $W$.
Note that if $w \in W \setminus W_{\inj}$, then 
$w$ lies on one or more of the hyperplanes in $\R^{2^m}$ given by 
$w(\sigma) = w(\tau)$, where $\sigma, \tau \in K$.
Also, any $w \in W$ determines a total preorder $\preceq_w$ on $K$ given by $\sigma \preceq_w \tau$ if $w(\sigma) \leq w(\tau)$.
Fix a linear extension $\trianglelefteq$ of the subset order of $K$, and recall that we may use $\trianglelefteq$ to refine the total preorder $\preceq_w$ to a total order $\leq_{(w,\trianglelefteq)}$ on $K$  
given by $\sigma \leq_{(w,\trianglelefteq)} \tau$ if $w(\sigma) < w(\tau)$ or $w(\sigma) = w(\tau)$ and $\sigma \trianglelefteq \tau$. 
Note that for $w \in W_{\inj}$ this total order does not depend on $\trianglelefteq$ and coincides with the previously defined order $\leq_w$.
Say that $w,w' \in W$ are equivalent if they induce the same total order $\leq_{(w,\trianglelefteq)}$ on $K$.
This relation gives a partition of $W$ that depends on $\trianglelefteq$, which we denote $\mathcal{P}_{\trianglelefteq}$ and which we call the \emph{order partition of $W$ induced by $\trianglelefteq $}.
Note that $\mathcal{P}_{\trianglelefteq}$ extends the previously defined partition $\mathcal{P}_{\inj}$, which did not depend on $\trianglelefteq$.
We will refer to an element of a partition as a $\emph{region}$.

\begin{lemma} \label{lem:regions}
    Let $\trianglelefteq$ be a linear extension of the subset order of $K$.  
    The regions of $\mathcal{P}_{\trianglelefteq}$ are $2^m$-dimensional polyhedrons bounded by hyperplanes given by $w(\sigma)=w(\tau)$, where $\sigma, \tau \in K$.
    The interior of a region of $\mathcal{P}_{\trianglelefteq}$ is a region of $\mathcal{P}_{\inj}$.
\end{lemma}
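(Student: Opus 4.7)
The plan is to present each region $R \in \mathcal{P}_\trianglelefteq$ explicitly by a finite system of linear inequalities between consecutive values in the common total order, read off the polyhedral structure and bounding hyperplanes, establish full dimensionality by exhibiting an explicit integer-valued weight, and finally identify the topological interior of $R$ with a region of $\mathcal{P}_\inj$.

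First I would fix $R \in \mathcal{P}_\trianglelefteq$ and let $\sigma_1 \prec' \sigma_2 \prec' \cdots \prec' \sigma_{2^m}$ denote the common total order $\leq_{(w,\trianglelefteq)}$ shared by all $w \in R$; by \cref{lem:weight-order} this is a linear extension of $(K,\subseteq)$. Unpacking the refinement rule defining $\leq_{(w,\trianglelefteq)}$, membership $w \in R$ is equivalent, over all $i \in \{1,\ldots,2^m-1\}$, to $w(\sigma_i) \leq w(\sigma_{i+1})$ when $\sigma_i \trianglelefteq \sigma_{i+1}$ and $w(\sigma_i) < w(\sigma_{i+1})$ when $\sigma_i \not\trianglelefteq \sigma_{i+1}$. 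A telescoping argument shows that these consecutive-pair inequalities already subsume every subset-order inequality defining $W$: since $\prec'$ extends $\subseteq$, any $\sigma \subseteq \tau$ equals $\sigma_i$ and $\sigma_j$ respectively with $i \leq j$, and chaining the inequalities from $i$ to $j$ yields $w(\sigma) \leq w(\tau)$. Hence $R$ is a partially open polyhedron in $\R^{2^m}$ bounded by hyperplanes of the form $w(\sigma) = w(\tau)$, which establishes the polyhedral claim.

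For full dimensionality, I would exhibit the integer-valued weight $w_0$ defined by $w_0(\sigma_i) = i$. Since $\prec'$ is a linear extension of $\subseteq$, the map $w_0$ is order-preserving and injective, so $w_0 \in W_\inj \cap R$, and every defining inequality is strict at $w_0$. A sufficiently small open ball about $w_0$ in $\R^{2^m}$ preserves every such strict inequality and therefore lies in $R$, so $R$ is $2^m$-dimensional.

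For the interior claim, I would argue that a point $w \in R$ lies in the topological interior of $R$ if and only if every defining inequality is strict at $w$. The forward direction follows by observing that any equality $w(\sigma_i) = w(\sigma_{i+1})$ can be broken by an arbitrarily small perturbation slightly decreasing $w(\sigma_{i+1})$, leaving every other inequality intact but pushing the perturbed point out of $R$; the reverse direction is the same small-ball argument used above. The resulting open set is exactly the set of injective weights inducing the total order $\prec'$, which is by definition a region of $\mathcal{P}_\inj$. The main obstacle is only the bookkeeping around strict versus non-strict inequalities dictated by $\trianglelefteq$; the one substantive step is the telescoping observation that the consecutive-pair inequalities absorb all the subset-order constraints cutting out $W$.
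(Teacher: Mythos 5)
The paper states this lemma without proof, so there is nothing to compare against; judged on its own, your argument is correct and supplies exactly the verification the authors left implicit. The key observation --- that a region $R$ is cut out by the consecutive-pair constraints $w(\sigma_i)\leq w(\sigma_{i+1})$ (non-strict when $\sigma_i\trianglelefteq\sigma_{i+1}$, strict otherwise), and that these telescope to recover the constraints defining $W$ --- is the right one, and the integer weight $w_0(\sigma_i)=i$ cleanly gives full dimensionality. Two small points. First, in the reverse direction of your membership equivalence you should say a sentence about why the consecutive-pair conditions pin down the \emph{entire} order $\leq_{(w,\trianglelefteq)}$ on non-adjacent pairs: for $i<j$, chaining gives $w(\sigma_i)\leq w(\sigma_j)$, and if equality holds then every intermediate comparison was an equality, so $\sigma_k\trianglelefteq\sigma_{k+1}$ for all $i\leq k<j$ and hence $\sigma_i\trianglelefteq\sigma_j$ by transitivity; since two total orders on a finite set that refine one another coincide, $\leq_{(w,\trianglelefteq)}=\prec'$. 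Second, your phrase ``partially open polyhedron'' is actually more precise than the lemma's wording --- regions other than the one corresponding to $\trianglelefteq$ itself are genuinely not closed (consistent with \cref{lem:region-closed}) --- and your interior characterization (interior points are exactly those at which all consecutive inequalities are strict, i.e.\ the injective weights inducing $\prec'$) correctly identifies $\mathrm{int}(R)$ with the corresponding region of $\mathcal{P}_{\inj}$.
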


\begin{lemma} \label{lem:region-closed}
     Let $\trianglelefteq$ be a linear extension of the subset order of $K$.  
    Each of the regions of $\mathcal{P}_{\trianglelefteq}$ corresponds to a linear extension of the subset order of $K$.
    The region of $\mathcal{P}_{\trianglelefteq}$ corresponding to $\trianglelefteq$ is closed.
\end{lemma}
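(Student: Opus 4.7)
The plan is to invoke the definitions directly: the first assertion reduces immediately to \cref{lem:weight-order}, while the second follows by exhibiting the region corresponding to $\trianglelefteq$ as an intersection of closed half-spaces in $\R^{2^m}$.

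For the first assertion, I would fix a region $R \in \mathcal{P}_\trianglelefteq$; by definition all $w \in R$ induce the same total order $\leq_{(w,\trianglelefteq)}$ on $K$, which is therefore a well-defined total order attached to $R$. \Cref{lem:weight-order} then gives that $\sigma \subseteq \tau$ implies $\sigma \leq_{(w,\trianglelefteq)} \tau$, so this total order is a linear extension of $\subseteq$.

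For the second assertion, let $R_\trianglelefteq \subseteq W$ be the region consisting of those $w$ with $\leq_{(w,\trianglelefteq)} = \trianglelefteq$. I would show
\[
R_\trianglelefteq = \bigcap_{\sigma \trianglelefteq \tau} \left\{ w \in \R^{2^m} : w(\sigma) \leq w(\tau) \right\}.
\]
The forward inclusion is immediate from the definition of $\leq_{(w,\trianglelefteq)}$. For the reverse inclusion, suppose $w$ satisfies $w(\sigma) \leq w(\tau)$ for every pair with $\sigma \trianglelefteq \tau$; then for each such pair either $w(\sigma) < w(\tau)$, placing $\sigma$ strictly below $\tau$ in $\leq_{(w,\trianglelefteq)}$, or $w(\sigma) = w(\tau)$, in which case the tiebreak rule in the definition of $\leq_{(w,\trianglelefteq)}$ again places $\sigma$ below $\tau$ because $\sigma \trianglelefteq \tau$. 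Hence no pair is reversed from $\trianglelefteq$, so $\leq_{(w,\trianglelefteq)} = \trianglelefteq$. Because $\trianglelefteq$ extends $\subseteq$, these inequalities already imply those defining $W$, so the intersection lies in $W$ automatically. As a finite intersection of closed half-spaces in $\R^{2^m}$, the region $R_\trianglelefteq$ is closed (and in fact polyhedral).

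The only step that requires any care is the reverse inclusion above, where one needs to remember that the tiebreak in $\leq_{(w,\trianglelefteq)}$ is governed by $\trianglelefteq$ itself, so the non-strict weight inequalities along $\trianglelefteq$-edges suffice to force $\leq_{(w,\trianglelefteq)} = \trianglelefteq$; everything else is a routine unwinding of definitions.
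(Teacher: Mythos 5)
Your proof is correct. The paper states \cref{lem:region-closed} without giving a proof, and your argument is exactly the intended one: the first assertion is the remark following \cref{lem:weight-order}, and identifying the region corresponding to $\trianglelefteq$ with the finite intersection of closed half-spaces $\{w : w(\sigma) \leq w(\tau)\}$ over pairs $\sigma \trianglelefteq \tau$ — including the check that the tiebreak rule makes the non-strict inequalities sufficient to force $\leq_{(w,\trianglelefteq)} = \trianglelefteq$, and that these inequalities subsume the defining inequalities of $W$ — is consistent with the description of regions as polyhedra bounded by the hyperplanes $w(\sigma)=w(\tau)$ in \cref{lem:regions}.
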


\begin{lemma} \label{lem:region-containment}
    Let $\trianglelefteq$ and $\leq$ be linear extensions of the subset order of $K$. 
    The regions of $\mathcal{P}_{\trianglelefteq}$ and the regions of $\mathcal{P}_{\leq}$ are subsets of $W$.
    The region of $\mathcal{P}_{\trianglelefteq}$ corresponding to $\leq$ is contained in the region of $\mathcal{P}_{\leq}$ corresponding to $\leq$.
\end{lemma}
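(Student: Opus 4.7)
The plan is a direct unfolding of the definitions, leveraging the observation that the choice of refining linear extension only affects the total order $\leq_{(w,\cdot)}$ on pairs of simplices where $w$ takes the same value. The first assertion of the lemma, that the regions of both partitions are subsets of $W$, is immediate since $\mathcal{P}_\trianglelefteq$ and $\mathcal{P}_\leq$ are by construction partitions of $W$.

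For the containment, I would let $w$ belong to the region of $\mathcal{P}_\trianglelefteq$ corresponding to $\leq$, so that by definition $\leq_{(w,\trianglelefteq)}\,=\,\leq$, and aim to show that $\leq_{(w,\leq)}\,=\,\leq$ as well; this would place $w$ in the region of $\mathcal{P}_\leq$ corresponding to $\leq$, as required. Fix an arbitrary pair $\sigma,\tau\in K$ and split into cases based on whether $w(\sigma)=w(\tau)$. If $w(\sigma)\neq w(\tau)$, both refinements $\leq_{(w,\trianglelefteq)}$ and $\leq_{(w,\leq)}$ order $\sigma$ and $\tau$ by the strict inequality between $w(\sigma)$ and $w(\tau)$, independently of the tie-breaking linear extension, so they agree on this pair, and hence both agree with $\leq$ by hypothesis. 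If instead $w(\sigma)=w(\tau)$, the definition of $\leq_{(w,\trianglelefteq)}$ together with the hypothesis $\leq_{(w,\trianglelefteq)}\,=\,\leq$ gives that $\sigma\leq\tau$ if and only if $\sigma\trianglelefteq\tau$; meanwhile the definition of $\leq_{(w,\leq)}$ tautologically yields $\sigma\leq_{(w,\leq)}\tau$ if and only if $\sigma\leq\tau$, so $\leq_{(w,\leq)}$ and $\leq$ also agree on this pair.

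Combining the cases across all pairs $\sigma,\tau\in K$ gives $\leq_{(w,\leq)}\,=\,\leq$, which is exactly the statement that $w$ lies in the region of $\mathcal{P}_\leq$ corresponding to $\leq$. The only subtlety worth flagging is the tied case: the hypothesis $\leq_{(w,\trianglelefteq)}\,=\,\leq$ indirectly forces $\trianglelefteq$ and $\leq$ to coincide on every pair where $w$ ties, which is precisely what makes the refinement by $\leq$ collapse trivially. There is no real obstacle here beyond careful bookkeeping of the refinement definitions.
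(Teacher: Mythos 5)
Your proof is correct. The paper states this lemma without proof, and your argument is the evident one it intends: the only non-tautological case is $w(\sigma)\neq w(\tau)$, where both refinements $\leq_{(w,\trianglelefteq)}$ and $\leq_{(w,\leq)}$ are determined by $w$ alone, while on tied pairs $\leq_{(w,\leq)}$ agrees with $\leq$ by definition.
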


\subsection{Persistence diagrams of a PL curve of weights} \label{sec:pl-weights-vineyards}

We now study persistence diagrams arising from a PL curve of weights.

Let $a\leq b\in \R$. A \emph{PL vine} is a piecewise linear (i.e.\ piecewise affine) continuous function $v:[a,b] \to \R^2_\leq$.
A \emph{PL vineyard} is a function $V:[a,b] \to \PDset$ such that there exists $n \geq 0$ and PL vines $v_1,\ldots,v_n: [a,b] \to \R^2_\leq$ for which $V = \sum_{j=1}^n v_j$.
Note that since the vines in a vineyard may intersect, the decomposition of a vineyard into vines need not be unique.

\begin{figure}[ht]
\centering

\begin{subfigure}{0.25\textwidth}
  \centering
  \includegraphics[width=\textwidth]{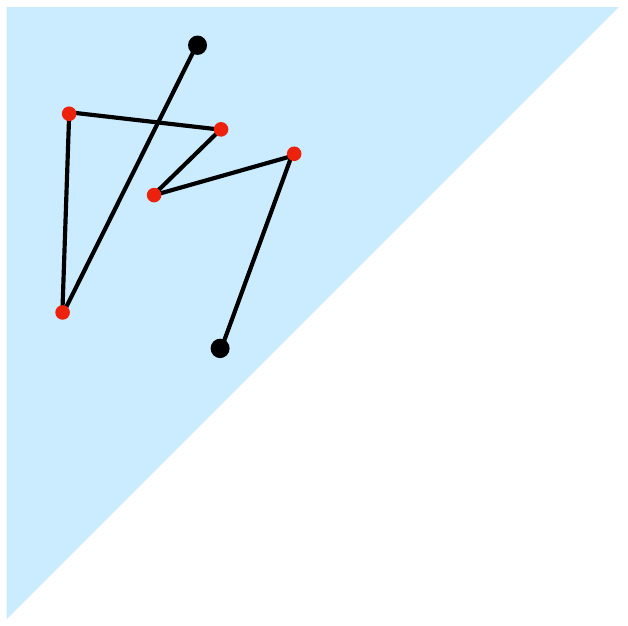}
  \label{fig:linear}
\end{subfigure}
\hspace{1cm}
\begin{subfigure}{0.28\textwidth}
  \centering
  \includegraphics[width=\textwidth]{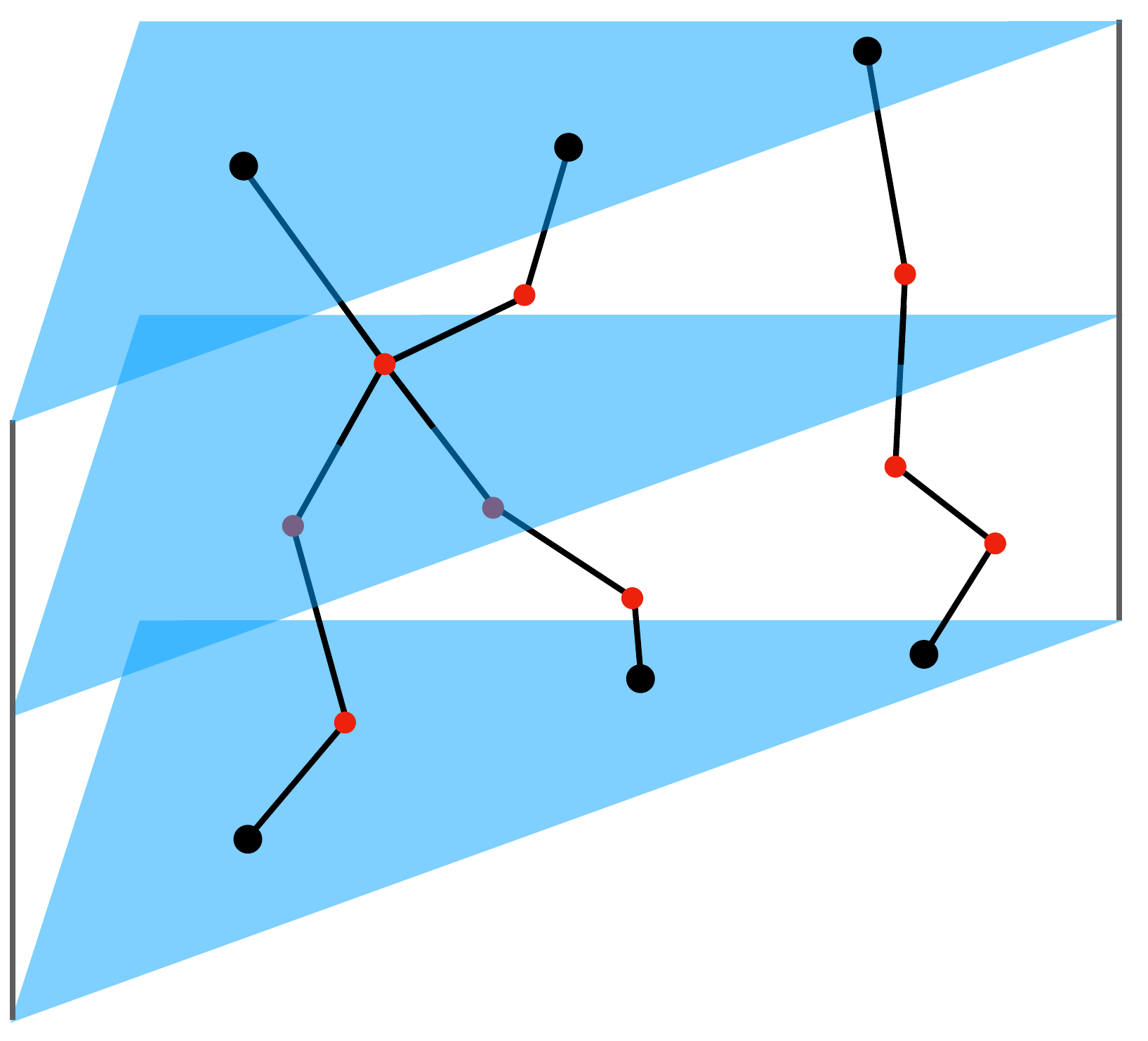}
  \label{fig:vineyard}
\end{subfigure}

\caption{Left: The image of a PL vine is a PL path in the half-plane $\R^2_\leq$. Right: A visualization of a PL vineyard whose horizontal slices are persistence diagrams, where the vertical axis represents the interval $[a,b]$. In both cases, red dots indicate where the birth-death pairing changes.}
\label{fig:combined}
\end{figure}

Let $K$ denote the augmented combinatorial $(m-1)$ complex.
Let $W\subset \R^{2^m}$ denote the weight space of $K$. 
Let $\trianglelefteq$ denote a linear extension of the subset order of $K$ and choose a persistence algorithm $\mathcal{A}$.
By \cref{lem:regions}, the total order $\trianglelefteq$ induces a partition $\mathcal{P}_{\trianglelefteq}$ of $W$ into finitely many polyhedrons.

\begin{lemma} \label{lem:pl-partition}
    Let $w:[a,b] \to W$ be a PL curve. 
    Then there is a finite partition of $[a,b]$ into intervals such that the restriction of $w$ to each of these intervals has image contained in a single region of $\mathcal{P}_{\trianglelefteq}$.    
\end{lemma}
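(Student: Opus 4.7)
The plan is to combine two sources of finiteness: the PL structure of $w$ (which breaks $[a,b]$ into finitely many affine pieces) and the finiteness of the hyperplane arrangement cutting out the regions of $\mathcal{P}_{\trianglelefteq}$. By the PL hypothesis, I would first choose a subdivision $a = s_0 < s_1 < \cdots < s_N = b$ on which $w$ is affine on each closed interval $[s_j, s_{j+1}]$.

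Next I would recall, using \cref{lem:regions}, that two weights $w, w' \in W$ lie in the same region of $\mathcal{P}_{\trianglelefteq}$ if and only if they induce the same total order $\leq_{(\cdot,\trianglelefteq)}$, and this in turn is determined by the sign pattern of the finitely many quantities $w(\sigma) - w(\tau)$ for $\sigma, \tau \in K$ (with ties broken by the fixed $\trianglelefteq$). In other words, membership in a fixed region is characterized by specifying, for each pair $(\sigma,\tau) \in K \times K$, whether $w(\sigma) - w(\tau)$ is positive, negative, or zero.

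Now I would exploit that on each affine piece $[s_j, s_{j+1}]$, the function $t \mapsto w(\sigma)(t) - w(\tau)(t)$ is affine in $t$, hence either identically zero or possessing at most one zero in $[s_j, s_{j+1}]$. For each piece and each ordered pair in $K \times K$, collect the (at most one) zero, if any; this yields a finite set of new breakpoints. Adjoining these to the $s_j$ produces a refined sequence $a = t_0 < t_1 < \cdots < t_M = b$.

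On each open subinterval $(t_i, t_{i+1})$, every sign $\sign\bigl(w(\sigma)(t) - w(\tau)(t)\bigr)$ is then constant: either the corresponding affine function is identically zero on the enclosing piece, or it is nonzero throughout $(t_i, t_{i+1})$ with constant sign (since its only possible zero has already been made a breakpoint). Thus $w\bigl((t_i, t_{i+1})\bigr)$ lies in a single region of $\mathcal{P}_{\trianglelefteq}$, and the same holds trivially for each singleton $\{t_i\}$. Declaring the partition to consist of the singletons $\{t_i\}$ and the open intervals $(t_i, t_{i+1})$ gives the desired finite partition of $[a,b]$ into intervals. There is no real obstacle here; the only subtlety worth naming is that a breakpoint $t_i$ may sit in a different region from the adjacent open intervals, which is why we list singletons separately in the partition rather than using closed subintervals $[t_i, t_{i+1}]$.
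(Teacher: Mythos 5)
Your proof is correct and follows the same approach the paper intends: the paper's own proof is a one-sentence appeal to the compactness of $[a,b]$, the PL structure of $w$, and the finiteness of the polyhedral partition $\mathcal{P}_{\trianglelefteq}$, and your argument simply makes this explicit by subdividing into affine pieces and collecting the finitely many zeros of the affine functions $t \mapsto w_t(\sigma) - w_t(\tau)$. Your care in separating the singleton breakpoints from the open subintervals (since a breakpoint may lie in a different region) is a detail the paper glosses over but which is needed for a genuine partition.
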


\begin{proof}
    Since $[a,b]$ is compact, $w$ is PL, and $\mathcal{P}_{\trianglelefteq}$ is a partition consisting of finitely many polyhedrons, the result follows.
\end{proof}

\begin{proposition} \label{prop:pl-main}
    Let $w:[a,b] \to W$ be a PL curve. Let $-1 \leq k \leq m-1$.
    Let $V:[a,b] \to \PDset$ be given by $V(t) = \PD_k(K,w_t)$.
    If $w((a,b))$ lies in a single region of $\mathcal{P}_{\trianglelefteq}$ then there exist $k$-simplices $\sigma_1,\ldots,\sigma_{f_k^+}$ and $(k+1)$-simplices $\tau_1,\ldots,\tau_{f_k^+}$ of $K$ such that for $t \in [a,b]$,
    \begin{equation} \label{eq:pd-single-region}
        \PD_k(K,w_t) = \sum_{i=1}^{f_k^+} (w_t(\sigma_i),w_t(\tau_i)).
    \end{equation}
\end{proposition}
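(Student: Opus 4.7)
The plan is to exploit the fact that within a single region of $\mathcal{P}_{\trianglelefteq}$ the total order on $K$ induced by the weight does not change, so a single run of any fixed persistence algorithm produces one birth-death matching that works for every $t$, and the resulting persistence diagram depends on $t$ only through the evaluation of the birth/death simplices under $w_t$.

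First I would let $\leq^*$ denote the linear extension of the subset order of $K$ corresponding, by Lemma \ref{lem:region-closed}, to the (unique) region of $\mathcal{P}_{\trianglelefteq}$ containing $w((a,b))$. By the definition of $\mathcal{P}_{\trianglelefteq}$ we have $\leq_{(w_t,\trianglelefteq)} = \leq^*$ for every $t \in (a,b)$. Next I would extend this to all of $[a,b]$: by Lemma \ref{lem:region-containment}, the region of $\mathcal{P}_{\trianglelefteq}$ corresponding to $\leq^*$ is contained in the region of $\mathcal{P}_{\leq^*}$ corresponding to $\leq^*$, and by Lemma \ref{lem:region-closed} the latter region is closed. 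Since the PL curve $w$ is continuous, $w([a,b])$ is contained in the closure of $w((a,b))$, hence in this closed region. Consequently, if we use $\leq^*$ in place of $\trianglelefteq$ as the linear extension, then $\leq_{(w_t,\leq^*)} = \leq^*$ for every $t \in [a,b]$.

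Next I would fix a persistence algorithm $\mathcal{A}$ and apply it to the boundary matrix of $(K,\leq^*)$, producing a birth-death matching $\BD^{\mathcal{A}}(K,\leq^*)$ that is independent of $t$. By Lemma \ref{lem:cardinality-BDk}, the degree-$k$ part $\BD_k^{\mathcal{A}}(K,\leq^*)$ has cardinality $f_k^+$, and I would enumerate its pairs as $(\sigma_1,\tau_1),\ldots,(\sigma_{f_k^+},\tau_{f_k^+})$ with $\dim\sigma_i = k$ and $\dim\tau_i = k+1$. By Proposition \ref{thm:ph} we have $\PD_k(K,w_t) = \PD_k^{\mathcal{A}}(K,\leq^*,w_t)$ for every $t \in [a,b]$, and by the defining formula \eqref{eq:pd} together with the identity $\leq_{(w_t,\leq^*)} = \leq^*$ established above, this equals $\sum_{i=1}^{f_k^+} (w_t(\sigma_i), w_t(\tau_i))$, which is the required identity.

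The main obstacle is the gap between the hypothesis, which only constrains $w$ on the open interval $(a,b)$, and the conclusion, which is asked for the closed interval $[a,b]$. On the boundary of a region of $\mathcal{P}_{\trianglelefteq}$ the weight $w_t$ may acquire ties, so the refined order $\leq_{(w_t,\trianglelefteq)}$ can jump to a different linear extension, breaking the constancy argument if one insists on using $\trianglelefteq$. The key maneuver is to switch from $\mathcal{P}_{\trianglelefteq}$ to $\mathcal{P}_{\leq^*}$ via Lemmas \ref{lem:region-closed} and \ref{lem:region-containment}, so that the relevant region becomes closed and continuity of $w$ carries the correct order all the way to the endpoints. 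Once this step is in place, the remainder of the argument follows directly from the definitions and from the order-independence given by Proposition \ref{thm:ph}.
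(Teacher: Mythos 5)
Your proposal is correct and follows essentially the same route as the paper's proof: identify the linear extension $\leq$ corresponding to the region containing $w((a,b))$, pass to the closed region of $\mathcal{P}_{\leq}$ via Lemmas \ref{lem:region-containment} and \ref{lem:region-closed}, use continuity to extend to $[a,b]$, and conclude with the constancy of the birth-death matching together with \cref{lem:cardinality-BDk}, \cref{thm:ph}, and \eqref{eq:pd}. Your treatment of the endpoint issue (switching the reference linear extension and invoking order-independence) is a slightly more explicit rendering of the same maneuver the paper performs.
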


\begin{proof}
     By \cref{lem:cardinality-BDk}, for all $t$, the cardinality of $V(t)$ is $f_k^+ = \sum_{j=1}^k (-1)^{k-j} \binom{m}{j+1}$.   
    By assumption, $w((a,b))$ lies in a region of $\mathcal{P}_{\trianglelefteq}$, which corresponds to some linear extension $\leq$ on $K$ of the subset order.
    By \cref{lem:region-containment}, $w((a,b))$ is contained in the region of $\mathcal{P}_{\leq}$ corresponding to $\leq$.
    By \cref{lem:region-closed}, this region is closed.
    Since $w$ is continuous, $w([a,b])$ lies in this region.
    Thus for all $t$, $\BD_k^\mathcal{A}(K,\leq_{(w_t,\trianglelefteq)}) = \BD_k^\mathcal{A}(K,\leq)$ and
    the desired result follows from \cref{thm:ph} and \eqref{eq:pd}.
\end{proof}

\begin{theorem} \label{thm:PL-w-PD}
    Let $w:[a,b] \to W$ be a PL curve. Let $-1 \leq k \leq m-1$.
    Then $V:[a,b] \to \PDset$ given by $V(t) = \PD_k(K,w_t)$ is a PL vineyard.
\end{theorem}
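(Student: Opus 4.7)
The plan is to derive the theorem by combining \cref{lem:pl-partition} with \cref{prop:pl-main} and then stitching the local descriptions into global vines.

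First, apply \cref{lem:pl-partition} to obtain a finite partition $a = a_0 < a_1 < \cdots < a_N = b$ such that, on each subinterval, the image of $w$ lies in a single region of $\mathcal{P}_\trianglelefteq$. On each subinterval $[a_{j-1}, a_j]$, \cref{prop:pl-main} supplies $k$-simplices $\sigma_1^{(j)}, \ldots, \sigma_{f_k^+}^{(j)}$ and $(k+1)$-simplices $\tau_1^{(j)}, \ldots, \tau_{f_k^+}^{(j)}$ with
\[
V(t) \;=\; \sum_{i=1}^{f_k^+} \bigl(w_t(\sigma_i^{(j)}),\, w_t(\tau_i^{(j)})\bigr) \qquad \text{for } t \in [a_{j-1}, a_j].
\]
Since $w \mapsto w(\sigma)$ is a linear functional on $W$ and $w$ is affine on $[a_{j-1}, a_j]$, each map $t \mapsto (w_t(\sigma_i^{(j)}), w_t(\tau_i^{(j)}))$ is affine; it takes values in $\R^2_\leq$ because every birth-death pair satisfies $w_t(\sigma_i^{(j)}) \leq w_t(\tau_i^{(j)})$ for all $t$.

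Next I would assemble global PL vines by matching indices across breakpoints. At each $a_j$, the persistence diagram $\PD_k(K, w_{a_j})$ is a well-defined multiset by \cref{thm:ph}, so the $f_k^+$ pairs in $\R^2_\leq$ produced from the left side of $a_j$ coincide, as a multiset, with those produced from the right side. Choosing any bijection between the two index sets that preserves pair-values, and relabeling the right-hand pieces accordingly, yields, after iterating over $j$, piecewise affine functions $v_1, \ldots, v_{f_k^+}\colon [a,b] \to \R^2_\leq$ that are continuous at every breakpoint by construction. These are PL vines, and $V = \sum_{i=1}^{f_k^+} v_i$ exhibits $V$ as a PL vineyard.

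The main obstacle is the gluing step, because the order $\leq_{(w_{a_j},\trianglelefteq)}$ at a breakpoint may disagree with the orders on both adjacent open subintervals, so the indexed collections of simplex pairs produced by \cref{prop:pl-main} on the two sides need not match. However, only the multiset of pair-values in $\R^2_\leq$ is relevant for a vineyard decomposition, and this multiset is the same on both sides by \cref{thm:ph} together with the continuity of $t \mapsto w_t(\sigma)$ for each fixed $\sigma$. Any value-preserving bijection of indices therefore suffices, and only finitely many such choices are required.
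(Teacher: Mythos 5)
Your proof follows the paper's argument exactly: the paper derives this theorem in one line by combining \cref{lem:pl-partition} and \cref{prop:pl-main}, precisely as you do. The gluing step you elaborate---matching the multisets of birth-death pairs at each breakpoint via \cref{thm:ph} and the continuity of $t \mapsto w_t(\sigma)$ so that the vines can be relabeled to be continuous---is left implicit in the paper, and your handling of it is correct.
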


\begin{proof}
 By \cref{lem:pl-partition}, there is a finite partition of $[a,b]$ into intervals such that the restriction of $w$ to each of these intervals has image contained in a single region of $\mathcal{P}_\trianglelefteq$.
    In particular, for each of these intervals, the image of its interior lies in a single region of $\mathcal{P}_\trianglelefteq$.
    By \cref{prop:pl-main}, the restriction of $V$ to the closure of each of these intervals is a PL vineyard.
    Since the partition of finite, $V$ is a PL vineyard.
\end{proof}

It will follow from work below (\cref{rem:extend-L-to-K}) that the above results extend to finite simplicial complexes.

We now specialize to the setting of weights induced by symmetric matrices. Let $\DM$ denote the set of $m \times m$ symmetric matrices considered in Section~\ref{sec:symmetric-matrices}.
That is, $\DM = \{M_{ij} \subseteq \R^{m^2} \ | \ \forall i,j, \  M_{ij} = M_{ji}, M_{ii} \leq M_{ij}\}$.

\begin{lemma} \label{lem:PL-M-wM}
    Let $M_\bullet: [a,b]  \to \DM$ be a PL curve. 
    Then $w_{M_\bullet}: [a,b] \to W$ is a PL curve.
\end{lemma}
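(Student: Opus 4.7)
The plan is to show that each of the $2^m$ coordinates of $w_{M_\bullet}$, viewed as a curve in $W\subset\R^{2^m}$, is a PL function $[a,b]\to\R$; this suffices to conclude that $w_{M_\bullet}$ itself is a PL curve into $W$.

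Unpacking the definition, $M_\bullet:[a,b]\to\DM$ being PL means that every matrix entry $(M_\bullet)_{ij}$ is a PL function of $t$, so one may choose a single finite partition of $[a,b]$ on which each of the $m^2$ entries is affine. Fix a simplex $\sigma\in K$. By the definition of $w_M$ in \eqref{eq:wM}, the coordinate $t\mapsto w_{M_t}(\sigma)$ equals either $\max\{(M_t)_{ij}:i,j\in\sigma\}$ when $\sigma\neq\varnothing$, or $\min\{(M_t)_{ii}:1\leq i\leq m\}$ when $\sigma=\varnothing$. On each subinterval of the chosen partition, this is the pointwise maximum (resp.\ minimum) of finitely many affine functions of $t$, hence convex (resp.\ concave) and PL; gluing across subintervals, $t\mapsto w_{M_t}(\sigma)$ is PL on all of $[a,b]$.

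To finish, I would take a common refinement of the finitely many subdivisions arising from each of the $2^m$ simplices $\sigma\in K$ to obtain a single finite partition of $[a,b]$ on which every coordinate of $w_{M_\bullet}$ is affine. Combined with the observation (noted in the paragraph after \eqref{eq:wM}) that for each $t\in[a,b]$ the map $w_{M_t}$ is order preserving and thus lies in $W$, this exhibits $w_{M_\bullet}$ as a PL curve $[a,b]\to W$. I do not anticipate any real obstacle: the argument reduces to the standard facts that a max or min of finitely many affine functions is PL, and that a common refinement of finitely many finite partitions is finite.
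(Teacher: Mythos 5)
Your proposal is correct and follows essentially the same route as the paper: the paper's proof likewise observes that the matrix entries are PL functions of $t$ and that each coordinate of $w_{M_\bullet}$ is a maximum of finitely many such functions, hence PL. You simply supply more detail (the common refinement of partitions and the explicit treatment of the $\sigma=\varnothing$ case as a minimum), which the paper leaves implicit.
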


\begin{proof}
    The entries of the matrices in the image of $M_\bullet$ are PL functions $[a,b]\to \R$, and the weight of any nonempty simplex is given by the maximum of a finite set of these functions by~\eqref{eq:wM}. 
    Hence, $w_{M_\bullet}$ is a PL curve in $W$. 
\end{proof}

Combining \cref{lem:PL-M-wM} and \cref{thm:PL-w-PD}, we have the following.

\begin{theorem} \label{thm:PL-M-PD}
    Let $M_\bullet: [a,b] \to \DM$ be a PL curve. 
    Let $-1 \leq k \leq m-1$.
    Then $V:[a,b] \to \PD$ given by $V(t) = \PD_k(K,w_{M_t})$ is a PL vineyard.
\end{theorem}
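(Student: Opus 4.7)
The plan is to observe that this theorem is an immediate corollary of the two immediately preceding results, \cref{lem:PL-M-wM} and \cref{thm:PL-w-PD}, so the proof is a one-line composition.

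First I would invoke \cref{lem:PL-M-wM} on the given PL curve $M_\bullet:[a,b] \to \DM$ to conclude that the induced curve of weights $w_{M_\bullet}:[a,b] \to W$, defined by $t \mapsto w_{M_t}$, is a PL curve in the weight space $W$ of the augmented combinatorial $(m-1)$-simplex $K$. This is the step that does the real content: it uses the fact that each entry $(M_t)_{ij}$ is PL in $t$ and that the weight of a simplex is a finite maximum of such entries, together with the observation that a pointwise finite max of PL functions is PL.

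Next, with $w_{M_\bullet}:[a,b] \to W$ in hand as a PL curve, I would apply \cref{thm:PL-w-PD} with this curve in place of the generic $w$ there. That theorem directly concludes that the map $t \mapsto \PD_k(K,w_{M_t})$ is a PL vineyard for each $-1 \leq k \leq m-1$, which is exactly the statement of $V(t) = \PD_k(K,w_{M_t})$ being a PL vineyard.

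Since both ingredients are already established, there is essentially no obstacle: the proof is purely a chaining of the two previous results. The only thing worth being careful about is that the definitions of PL curves in $\DM \subset \R^{m^2}$ and in $W \subset \R^{2^m}$ match up as required by the hypothesis of \cref{thm:PL-w-PD}, but this is handled in \cref{lem:PL-M-wM}. I would therefore write the proof in two sentences, citing the two prior results explicitly.
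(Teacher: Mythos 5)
Your proposal is correct and matches the paper exactly: the paper obtains \cref{thm:PL-M-PD} by the one-line remark ``Combining \cref{lem:PL-M-wM} and \cref{thm:PL-w-PD}, we have the following,'' which is precisely your chaining of the two prior results. No further comment is needed.
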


\subsection{Stability of persistence diagrams}
\label{sec:stability}

Numerous stability theorems follow easily from our results on PL vineyards.
We give a few examples.

Let $K$ denote the augmented combinatorial $(m-1)$-simplex,
and let $\trianglelefteq$ denote a linear extension of the subset order of $K$.

\begin{proposition} \label{prop:stability-main}
    Let $w$ and $w'$ be weights in a region of $\mathcal{P}_{\trianglelefteq}$ corresponding to a linear extension $\leq$ of the subset order on $K$.
    Let $1 \leq q \leq \infty$.
    Then $W_q(\PD(K,w),\PD(K,w')) \leq \norm{w-w'}_q$.
\end{proposition}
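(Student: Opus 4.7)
The plan is to exploit the fact that $w$ and $w'$ induce the same refined total order on $K$, so the persistence pairings coincide and provide a natural (not necessarily optimal) matching that we can evaluate to upper-bound the Wasserstein distance.

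First, since $w$ and $w'$ lie in a common region of $\mathcal{P}_{\trianglelefteq}$ corresponding to the linear extension $\leq$, by the definition of the order partition we have $\leq_{(w,\trianglelefteq)}\, =\, \leq_{(w',\trianglelefteq)}\, =\, \leq$. Fix any persistence algorithm $\mathcal{A}$ and let $\BD_k = \BD_k^\mathcal{A}(K,\leq)$; by the definition in~\eqref{eq:pd} we obtain
\[
\PD_k(K,w) = \sum_{(\sigma,\tau)\in \BD_k}(w(\sigma),w(\tau)), \quad \PD_k(K,w') = \sum_{(\sigma,\tau)\in \BD_k}(w'(\sigma),w'(\tau)),
\]
where the use of $\PD_k(K,\cdot)$ as opposed to $\PD_k^\mathcal{A}(K,\trianglelefteq,\cdot)$ is justified by \cref{thm:ph}. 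This identifies the two diagrams point-by-point through $\BD_k$ and, using this as a candidate matching in the definition of the $q$-Wasserstein distance on $\PDset$, we obtain (for $1 \leq q < \infty$) the bound
\[
W_q\bigl(\PD_k(K,w),\PD_k(K,w')\bigr)^{q} \leq \sum_{(\sigma,\tau) \in \BD_k}\bigl(|w(\sigma)-w'(\sigma)|^{q} + |w(\tau)-w'(\tau)|^{q}\bigr),
\]
since the cost of matching the pair $(w(\sigma),w(\tau))$ to $(w'(\sigma),w'(\tau))$ in the $q$-norm on $\R^2$ is exactly the right-hand summand, and the matching leaves no diagonal contributions.

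Next, summing over $k$ using the definition of $W_q$ on $\gradedPDset$ and aggregating the pairwise contributions, I would use the fact that $\bigsqcup_{k} \BD_k$ is a perfect matching of the simplices of $K$ (this follows from \cref{lem:cardinality-BDk} together with the fact that $\hat{K}$ is contractible, so every simplex appears either as a birth or a death in exactly one pair across all degrees). Therefore each $\sigma \in K$ contributes exactly once, and
\[
W_q\bigl(\PD(K,w),\PD(K,w')\bigr)^{q} \leq \sum_{k}\sum_{(\sigma,\tau)\in \BD_k}\!\bigl(|w(\sigma)-w'(\sigma)|^{q} + |w(\tau)-w'(\tau)|^{q}\bigr) = \sum_{\sigma \in K} |w(\sigma)-w'(\sigma)|^{q} = \norm{w-w'}_q^{q}.
\]
Taking $q$-th roots gives the claim.

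For $q=\infty$, the same matching yields $W_\infty(\PD_k(K,w),\PD_k(K,w')) \leq \max_{(\sigma,\tau)\in\BD_k} \max(|w(\sigma)-w'(\sigma)|, |w(\tau)-w'(\tau)|)$, and taking the max over $k$ (as in the definition of $W_\infty$ on $\gradedPDset$) together with the perfect-matching observation collapses the right-hand side to $\max_{\sigma \in K}|w(\sigma)-w'(\sigma)| = \norm{w-w'}_\infty$. The only real subtlety is verifying that the birth-death matching is perfect on all of $K$ so that the sum telescopes across degrees into $\norm{w-w'}_q^q$; this is immediate from \cref{lem:cardinality-BDk} since $\sum_k f_k^+ = |K|$, but it is the one step that needs to be called out explicitly.
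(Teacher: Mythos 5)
Your proof is correct and follows essentially the same route as the paper: both use the fact that $w$ and $w'$ share the birth-death matching $\BD^\mathcal{A}(K,\leq)$, which is a perfect matching of the simplices of $K$, and evaluate that matching as a candidate in the definition of the $q$-Wasserstein distance so the total cost collapses to $\norm{w-w'}_q$. One tiny quibble in your closing remark: the count should be $\sum_k 2f_k^+ = \abs{K}$ (each pair contributes two simplices), not $\sum_k f_k^+ = \abs{K}$, though the perfect-matching fact itself is stated in the paper and your argument does not depend on this miscount.
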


\begin{proof}
    Let $\mathcal{A}$ be a persistence algorithm.
    This persistence algorithm produces a perfect matching of the simplices of $K$.
    By assumption, $\BD^\mathcal{A}(K,\leq_{(w,\trianglelefteq)}) = \BD^\mathcal{A}(K,\leq) = \BD^\mathcal{A}(K,\leq_{(w',\trianglelefteq)})$.
    Let $\BD^\mathcal{A}(K,\leq) = \left\{(\sigma_i,\tau_i)\right\}_i$.
Then $\PD(K,w) = \sum_i (w(\sigma_i),w(\tau_i))$ and
    $\PD(K,w') = \sum_i (w'(\sigma_i),w'(\tau_i))$.
    Using \eqref{eq:Wasserstein-distance} and the identity map (for which $B$ and $C$ are empty)
    we have
    \begin{multline*}
        W_q(\PD(K,w),\PD(K,w')) \leq \norm{\left( \norm{ (w(\sigma_i),w(\tau_i)) - (w'(\sigma_i),w'(\tau_i))} \right)_i }_q\\
        = \norm{ \left( \norm{ (w(\sigma_i) - w'(\sigma_i), w(\tau_i) - w'(\tau_i)) }_q \right)_i }_q = \norm{ \left( w(\sigma) - w'(\sigma) \right)_{\sigma \in K} }_q =  \norm{w-w'}_q.
    \end{multline*}
\end{proof}

\begin{theorem} \label{thm:stability-A}
    Let $1 \leq q \leq \infty$.
    Consider $W$ with the metric obtained from the $q$-norm on $\R^{2^m}$ and 
    $\gradedPDset$ with the $q$-Wasserstein distance.
    Then 
    $\PD(K,-): W \to \gradedPDset$ is $1$-Lipschitz.
\end{theorem}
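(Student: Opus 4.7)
The plan is to reduce the global Lipschitz bound to the local estimate in \cref{prop:stability-main} by connecting $w$ and $w'$ with the linear path $w_t = (1-t)w + tw'$ for $t \in [0,1]$ and chaining the local estimate along a suitable partition of $[0,1]$.

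First, since $t \mapsto w_t$ is a PL curve in $W$, I apply \cref{lem:pl-partition} to obtain a partition $0 = t_0 < t_1 < \cdots < t_n = 1$ such that for each $i$ the image $w_\bullet((t_{i-1}, t_i))$ is contained in a single region $R_i$ of $\mathcal{P}_{\trianglelefteq}$. Let $\leq_i$ denote the linear extension of the subset order on $K$ associated with $R_i$. By \cref{lem:region-containment}, $R_i$ is contained in the region $R'_i$ of $\mathcal{P}_{\leq_i}$ corresponding to $\leq_i$, and by \cref{lem:region-closed} the region $R'_i$ is closed. Continuity of $t \mapsto w_t$ then pushes the entire closed segment $w_\bullet([t_{i-1}, t_i])$ into $R'_i$, and in particular both endpoints $w_{t_{i-1}}$ and $w_{t_i}$ lie in $R'_i$.

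Next, I apply \cref{prop:stability-main} with the auxiliary linear extension taken to be $\leq_i$ (rather than $\trianglelefteq$) to the pair of weights $w_{t_{i-1}}, w_{t_i} \in R'_i$, obtaining
$$W_q(\PD(K, w_{t_{i-1}}), \PD(K, w_{t_i})) \leq \norm{w_{t_{i-1}} - w_{t_i}}_q = (t_i - t_{i-1})\norm{w - w'}_q.$$
Summing over $i$ using the triangle inequality for the $q$-Wasserstein distance on $\gradedPDset$ yields
$$W_q(\PD(K,w), \PD(K,w')) \leq \sum_{i=1}^n W_q(\PD(K, w_{t_{i-1}}), \PD(K, w_{t_i})) \leq \sum_{i=1}^n (t_i - t_{i-1})\norm{w - w'}_q = \norm{w - w'}_q,$$
which is the claim.

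The main subtlety will be the handling of the breakpoints $t_i$. At such a value the refined order $\leq_{(w_{t_i},\trianglelefteq)}$ can differ from the one governing either adjacent open subinterval, so $w_{t_{i-1}}$ and $w_{t_i}$ need not lie in a common region of $\mathcal{P}_{\trianglelefteq}$ and \cref{prop:stability-main} cannot be applied directly with the original linear extension. The fix, which mirrors the argument inside \cref{prop:pl-main}, is to switch the auxiliary linear extension from $\trianglelefteq$ to the $\leq_i$ determined by the interior region; closedness of the corresponding region of $\mathcal{P}_{\leq_i}$ then absorbs the endpoints, after which the telescoping triangle inequality finishes the proof.
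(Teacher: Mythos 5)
Your proof is correct and follows essentially the same route as the paper: connect the two weights by a line segment, invoke \cref{lem:pl-partition} to split it into subintervals lying in single regions, apply \cref{prop:stability-main} on each piece, and telescope with the triangle inequality. The only difference is that you spell out the endpoint issue (via \cref{lem:region-containment} and \cref{lem:region-closed}, switching the auxiliary linear extension to $\leq_i$), a detail the paper's one-line proof leaves implicit.
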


\begin{proof}
    Let $w',w'' \in W$.
    Let $w:[a,b] \to W$ be a geodesic whose image is the line segment from $w'$ to $w''$. 
    Combining \cref{lem:pl-partition}, \cref{thm:PL-w-PD}, and \cref{prop:stability-main}, the result follows.
\end{proof}

\begin{theorem} \label{thm:stability-3}
    Consider $W$ with the metric obtained from the $\infty$-norm on $\R^{2^m}$ and 
    $\PDset$ with the 
    $1$-Wasserstein distance 
    distance.
    Then
    $\PD_k(K,-): W \to \PDset$ is $2 f_k^+$-Lipschitz.
\end{theorem}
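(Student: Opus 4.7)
The plan is to mirror the argument used for \cref{thm:stability-A}, with two adjustments: I restrict attention to a single degree $k$, and I convert between the $\infty$-norm on weights and the $1$-norm used implicitly in the Wasserstein bound.

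First, given $w', w'' \in W$, I would use convexity of $W$ (it is a polyhedron) to form the line segment $w: [0,1] \to W$, $w_t = (1-t)w' + t w''$, which is a PL curve. Applying \cref{lem:pl-partition}, I partition $[0,1]$ into finitely many subintervals $[t_{i-1},t_i]$ such that each $w([t_{i-1},t_i])$ lies in a single region of $\mathcal{P}_{\trianglelefteq}$, corresponding to some linear extension $\leq^{(i)}$ of the subset order on $K$.

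Next, I would prove a degree-$k$, $\infty$-norm analogue of \cref{prop:stability-main}: if $u, u'$ lie in a common region of $\mathcal{P}_{\trianglelefteq}$, then the persistence algorithm $\mathcal{A}$ produces the same birth-death matching $\BD_k^\mathcal{A}(K,\leq^{(i)}) = \left((\sigma_j,\tau_j)\right)_{j=1}^{f_k^+}$ for both orders $\leq_{(u,\trianglelefteq)}$ and $\leq_{(u',\trianglelefteq)}$ by \cref{lem:cardinality-BDk}. Pairing birth-death pairs for $u$ with the corresponding birth-death pairs for $u'$ under this common matching yields the estimate
\begin{align*}
W_1\bigl(\PD_k(K,u),\PD_k(K,u')\bigr)
&\leq \sum_{j=1}^{f_k^+} \bigl\lVert\bigl(u(\sigma_j) - u'(\sigma_j),\, u(\tau_j) - u'(\tau_j)\bigr)\bigr\rVert_1 \\
&= \sum_{j=1}^{f_k^+} \bigl(|u(\sigma_j)-u'(\sigma_j)| + |u(\tau_j)-u'(\tau_j)|\bigr) \\
&\leq 2 f_k^+ \,\lVert u - u' \rVert_\infty,
\end{align*}
where the last inequality bounds each of the $2 f_k^+$ coordinate differences by $\lVert u - u' \rVert_\infty$.

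Finally, applying this estimate on each subinterval and using the triangle inequality for $W_1$,
\begin{equation*}
W_1\bigl(\PD_k(K,w'),\PD_k(K,w'')\bigr) \leq \sum_{i} 2 f_k^+ \,\lVert w_{t_i} - w_{t_{i-1}} \rVert_\infty = 2 f_k^+ \sum_i (t_i - t_{i-1}) \lVert w'' - w' \rVert_\infty = 2 f_k^+ \,\lVert w'' - w' \rVert_\infty,
\end{equation*}
since the line segment is a geodesic in the $\infty$-norm, so the summed increments telescope. This gives the desired Lipschitz bound.

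The only subtle point, and not really an obstacle, is the passage from the $\infty$-norm on weights to a $1$-norm bound on the paired coordinate differences; the factor of $2$ comes from each birth-death pair contributing both a birth and a death coordinate, and the factor of $f_k^+$ from the number of such pairs in degree $k$ given by \cref{lem:cardinality-BDk}. Everything else is a direct reprise of the proof of \cref{thm:stability-A}.
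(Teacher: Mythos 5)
Your proposal is correct and follows essentially the same route as the paper: the paper's proof also takes the line segment from $w'$ to $w''$, invokes \cref{lem:pl-partition} to split it into subintervals on which the birth-death matching is constant, and combines \cref{lem:cardinality-BDk} and \cref{prop:pl-main} to get the per-region bound of $2f_k^+$ times the $\infty$-norm increment before summing. The only difference is that you spell out the matching estimate and the telescoping explicitly, which the paper leaves implicit.
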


\begin{proof}
    Let $w', w'' \in W$.
    Let $w:[a,b] \to W$ be a geodesic whose image is the line segment from $w'$ to $w''$. 
    Then $w$ is a PL curve.
    Combining \cref{lem:cardinality-BDk}, \cref{lem:pl-partition}, \cref{prop:pl-main}, and \cref{thm:PL-w-PD},
    the result follows.
\end{proof}

Let $\DM$ denote the set of symmetric $m \times m$ matrices $M$ satisfying $M_{ii} \leq M_{ij}$ for all $i,j$. 

\begin{theorem} \label{thm:stability-M-1}
    Consider $\DM$ with the metric obtained from the $\infty$-norm on $\R^{m^2}$ and 
    $\gradedPDset$ with the 
    bottleneck distance.
    Then 
    $\PD(K,-): \DM \to \gradedPDset$ is $1$-Lipschitz.
\end{theorem}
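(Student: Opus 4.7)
My plan is to factor the map $\PD(K,-) : \DM \to \gradedPDset$ as a composition $\DM \xrightarrow{\;w\;} W \xrightarrow{\;\PD(K,-)\;} \gradedPDset$, where the first map sends $M \mapsto w_M$, and to show that each factor is $1$-Lipschitz with respect to the appropriate metrics. The second factor is already handled by \cref{thm:stability-A} specialized to $q = \infty$, since the bottleneck distance on $\gradedPDset$ is exactly the $\infty$-Wasserstein distance as defined in \cref{sec:PD}. So the only real work is to verify that $M \mapsto w_M$ is $1$-Lipschitz when both $\DM \subset \R^{m^2}$ and $W \subset \R^{2^m}$ carry their respective $\infty$-norms.

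For that step, I would argue pointwise on simplices. For a nonempty simplex $\sigma \in K$, recall that $w_M(\sigma) = \max_{i,j \in \sigma} M_{ij}$. Since the maximum of any finite collection of real numbers is $1$-Lipschitz in its arguments with respect to the $\infty$-norm, one gets
\[
\abs{w_M(\sigma) - w_{M'}(\sigma)} \;\leq\; \max_{i,j \in \sigma} \abs{M_{ij} - M'_{ij}} \;\leq\; \norm{M - M'}_\infty.
\]
The empty simplex case is analogous, using the minimum formula in \eqref{eq:wM} (min is likewise $1$-Lipschitz). Taking the maximum over all $\sigma \in K$ then yields $\norm{w_M - w_{M'}}_\infty \leq \norm{M - M'}_\infty$, which is the desired Lipschitz bound on the first factor.

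Combining these two Lipschitz estimates by composition gives the result:
\[
W_\infty\bigl(\PD(K,M),\PD(K,M')\bigr) \;\leq\; \norm{w_M - w_{M'}}_\infty \;\leq\; \norm{M - M'}_\infty.
\]
There is no substantive obstacle here — the whole content is absorbed into \cref{thm:stability-A} (which in turn rests on \cref{thm:PL-w-PD} and \cref{prop:stability-main}) and the elementary fact that the max/min are $1$-Lipschitz. The only thing to double-check is the interpretation of the target metric: the theorem's statement refers to ``bottleneck distance,'' and by the extension convention from $\PDset$ to $\gradedPDset$ in \cref{sec:PD}, this is the maximum over degrees $k$ of the bottleneck distances $W_\infty(\PD_k(K,M),\PD_k(K,M'))$, matching precisely the $q=\infty$ case of \cref{thm:stability-A}.
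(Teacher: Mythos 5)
Your proposal is correct and follows essentially the same route as the paper: factor $\PD(K,-)$ through $M \mapsto w_M$ and apply \cref{thm:stability-A} with $q=\infty$, the only additional content being that \eqref{eq:wM} makes $M \mapsto w_M$ $1$-Lipschitz for the $\infty$-norms because $\max$ and $\min$ are $1$-Lipschitz. You actually spell out this last estimate more explicitly than the paper does, which simply cites \eqref{eq:wM}.
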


\begin{proof}
    Let $M', M'' \in \DM$. 
    Let $M:[a,b] \to \DM$ be a geodesic whose images is the line segment from $M'$ to $M''$.
    Using $w_{\bullet}: \DM \to W$ \eqref{eq:wM}, we have $w_{M_{\bullet}}: [a,b] \to W$ given by the composition $w_{\bullet} \circ M_{\bullet}$. The result follows from \eqref{eq:wM} and \cref{thm:stability-A}.
\end{proof}

\begin{theorem} \label{thm:stability-M-2}
    Consider $\DM$ with the metric obtained from the $1$-norm on $\R^{m^2}$ and 
    $\PDset$ with the 
    $1$-Wasserstein distance.
    Then 
    $\PD_k(K,-): \DM \to \PDset$ is $\binom{m-1}{k}$-Lipschitz.
\end{theorem}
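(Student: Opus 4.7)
The plan is to mirror the proof of \cref{thm:stability-M-1} while carefully tracking how many times each entry of $M$ contributes to the Wasserstein bound, using Pascal's identity for the counting. Take the geodesic $M:[a,b]\to\DM$ whose image is the line segment from $M'$ to $M''$. By \cref{lem:PL-M-wM} the composition $w_{M_\bullet}:[a,b]\to W$ is a PL curve, and by \cref{thm:PL-M-PD} the vineyard $V(t)=\PD_k(K,w_{M_t})$ is PL. Use \cref{lem:pl-partition} to subdivide $[a,b]$ into finitely many subintervals on each of which $w_{M_\bullet}$ lies in a single region of $\mathcal{P}_{\trianglelefteq}$.

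On such a subinterval $[s,s']$, \cref{prop:pl-main} provides $k$-simplices $\sigma_1,\dots,\sigma_{f_k^+}$ and $(k+1)$-simplices $\tau_1,\dots,\tau_{f_k^+}$ with $\PD_k(K,w_{M_t})=\sum_i (w_{M_t}(\sigma_i),w_{M_t}(\tau_i))$ for all $t\in[s,s']$. Taking the index-wise pairing as the matching in the $1$-Wasserstein distance bounds $W_1(\PD_k(K,M_s),\PD_k(K,M_{s'}))$ by $\sum_i\bigl(|w_{M_s}(\sigma_i)-w_{M_{s'}}(\sigma_i)|+|w_{M_s}(\tau_i)-w_{M_{s'}}(\tau_i)|\bigr)$. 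For any simplex $\sigma$ of dimension at least $1$, the hypothesis $M_{ii}\le M_{ij}$ gives $w_M(\sigma)=\max_{p\ne q\in\sigma} M_{pq}$, whence $|w_{M_s}(\sigma)-w_{M_{s'}}(\sigma)|\le\max_{p\ne q\in\sigma}|(M_s-M_{s'})_{pq}|\le\sum_{p\ne q\in\sigma}|(M_s-M_{s'})_{pq}|$. Swapping the order of summation, each ordered pair $(p,q)$ with $p\ne q$ occurs in at most $\binom{m-2}{k-1}$ of the $\sigma_i$ and at most $\binom{m-2}{k}$ of the $\tau_i$, since these are the numbers of $k$- and $(k+1)$-simplices of $K$ containing both $p$ and $q$; by Pascal's identity their sum is $\binom{m-1}{k}$. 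So on $[s,s']$ the Wasserstein distance is bounded by $\binom{m-1}{k}\sum_{p\ne q}|(M_s-M_{s'})_{pq}|\le \binom{m-1}{k}\|M_s-M_{s'}\|_1$. The case $k=0$ is checked directly: each vertex is the birth of at most one pair (contributing a single diagonal entry) and each edge is the death of at most one pair (contributing a single off-diagonal entry), which matches the constant $\binom{m-1}{0}=1$.

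Summing over the finite subdivision and applying the triangle inequality yields $W_1(\PD_k(K,M'),\PD_k(K,M''))\le \binom{m-1}{k}\|M'-M''\|_1$, since $\|M_s-M_{s'}\|_1$ is additive along the line segment $M_\bullet$. The main obstacle is the counting step: within a single region of $\mathcal{P}_{\trianglelefteq}$ only the comparisons between simplex weights are fixed, so the specific entry of $M_t$ realizing $w_{M_t}(\sigma)$ need not be constant in $t$. The device of passing from the tight bound $|w_{M_s}(\sigma)-w_{M_{s'}}(\sigma)|\le\max_{p\ne q\in\sigma}|(M_s-M_{s'})_{pq}|$ to the looser but uniform sum over distinct pairs in $\sigma$ circumvents this subtlety and makes Pascal's identity directly applicable.
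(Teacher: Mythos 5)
Your proof is correct, and it reaches the paper's constant by a genuinely different route. The paper's proof is a two-line reduction: it takes a geodesic in $\DM$ that changes the matrix entries \emph{one at a time}, counts how many simplex weights each single entry can control (at most $\binom{m-2}{k-1}$ $k$-simplices for an off-diagonal entry, with the $(k+1)$-simplex count and the Pascal identity $\binom{m-2}{k-1}+\binom{m-2}{k}=\binom{m-1}{k}$ left implicit), and then invokes \cref{thm:stability-A} as a black box. You instead work directly on the straight segment, use \cref{lem:pl-partition} and \cref{prop:pl-main} to get a constant index-wise matching on each subinterval, pass from the max to the sum over pairs in each simplex, and double-count; the two devices (entry-at-a-time path versus max-to-sum relaxation) play the same role of decoupling the bound from which entry realizes each weight. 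Your version is more self-contained and actually records the bookkeeping the paper elides --- in particular the count of $(k+1)$-simplices through a fixed pair and the explicit appearance of Pascal's identity, as well as the separate $k=0$ case where births are vertices and the weight formula $w_M(\sigma)=\max_{p\neq q\in\sigma}M_{pq}$ does not apply. The only caveat, shared with the paper's own statement, is that the claim should be read for $k\geq 0$: for $k=-1$ the constant $\binom{m-1}{-1}=0$ would force the map to be constant, which it is not.
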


\begin{proof}
    $K$ has $\binom{m}{k+1}$ $k$-simplices.
    Let $M',M''\in \DM$. 
    Let $M:[a,b] \to \DM$ be a geodesic from $M'$ to $M''$ that changes the entries one at a time. 
    Using $w_{\bullet}: \DM \to W$ \eqref{eq:wM}, we have $w_{M_{\bullet}}: [a,b] \to \PD$ given by the composition $w_{\bullet} \circ M_{\bullet}$.
    By \eqref{eq:wM}, 
    each diagonal entry in $M_t$ determines the weight of at most $\binom{m-1}{k}$ $k$-simplices in $w_{M_t}$, and
    each off-diagonal entry in $M_t$ determines the weight of at most $\binom{m-2}{k-1}$ $k$-simplices in $w_{M_t}$.
    The result now follows from \cref{thm:stability-A}.
\end{proof}

\begin{theorem} \label{thm:stability-M-3}
    Consider $\DM$ with the metric obtained from the $\infty$-norm on $\R^{m^2}$ and 
    $\PDset$ with the 
    $1$-Wasserstein distance.
    Then
    $\PD_k(K,-): \DM \to \PDset$ is $2 f_k^+$-Lipschitz.
\end{theorem}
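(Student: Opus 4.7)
The plan is to factor the map $\PD_k(K,-)\colon \DM \to \PDset$ through the weight space $W$ using the assignment $w_{\bullet}\colon \DM \to W$ from \eqref{eq:wM}. Since Theorem \ref{thm:stability-3} already provides the $2f_k^+$-Lipschitz bound for $\PD_k(K,-)\colon W \to \PDset$ when $W$ is equipped with the $\infty$-norm and $\PDset$ with the $1$-Wasserstein distance, it suffices to show that $w_{\bullet}$ is $1$-Lipschitz as a map $(\DM, \|\cdot\|_\infty) \to (W, \|\cdot\|_\infty)$.

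First I would verify this Lipschitz estimate for $w_{\bullet}$. For any nonempty $\sigma \in K$, formula \eqref{eq:wM} gives $w_M(\sigma) = \max_{i,j \in \sigma} M_{ij}$, and the maximum of finitely many real-valued functions is $1$-Lipschitz with respect to the $\infty$-norm on the entries involved; hence $|w_M(\sigma) - w_{M'}(\sigma)| \leq \|M - M'\|_\infty$. The same bound holds for $\sigma = \varnothing$ because $w_M(\varnothing) = \min_i M_{ii}$ is also $1$-Lipschitz in the entries. Taking the supremum over $\sigma \in K$ yields $\|w_M - w_{M'}\|_\infty \leq \|M - M'\|_\infty$.

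Then, for $M', M'' \in \DM$, applying Theorem \ref{thm:stability-3} to the pair $w_{M'}, w_{M''} \in W$ produces
\begin{equation*}
W_1\bigl(\PD_k(K, w_{M'}),\, \PD_k(K, w_{M''})\bigr) \leq 2 f_k^+ \, \|w_{M'} - w_{M''}\|_\infty \leq 2 f_k^+ \, \|M' - M''\|_\infty,
\end{equation*}
which is exactly the claim. Equivalently, and in the style of the proof of Theorem \ref{thm:stability-M-1}, one could take a geodesic $M\colon[a,b] \to \DM$ from $M'$ to $M''$ in the $\infty$-norm, compose with $w_\bullet$ to obtain a PL curve $w_{M_\bullet}\colon[a,b] \to W$ (as in \cref{lem:PL-M-wM}), and then invoke \cref{lem:pl-partition}, \cref{prop:pl-main}, and \cref{thm:PL-w-PD} along this path.

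There is no real obstacle: once the observation that $w_\bullet$ is $1$-Lipschitz in the $\infty$-norm is in hand, the result is a direct composition with Theorem \ref{thm:stability-3}. The only point worth checking carefully is that the $1$-Lipschitz property of $w_\bullet$ uses the $\infty$-norm on both sides — this is essential, since the analogous estimate fails when the domain carries the $1$-norm (and indeed Theorem \ref{thm:stability-M-2} yields the different constant $\binom{m-1}{k}$ in that setting).
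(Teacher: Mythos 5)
Your proposal is correct and takes essentially the same route as the paper: the paper's proof simply states that the result follows from \eqref{eq:wM} and \cref{thm:stability-3}, which is exactly your factorization through $w_\bullet\colon \DM \to W$ composed with the $2f_k^+$-Lipschitz bound of \cref{thm:stability-3}. You have merely made explicit the $1$-Lipschitz estimate for $w_\bullet$ in the $\infty$-norm that the paper leaves implicit.
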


\begin{proof}
    The result follows from \eqref{eq:wM} and \cref{thm:stability-3}.
\end{proof}

Let $\tilde{K}$ be an abstract simplicial complex with set of vertices $[m]$.
A \emph{weight} on $\tilde{K}$ is an order-preserving map $\tilde{w}:(\tilde{K},\subseteq)\to (\R,\leq)$.

\begin{lemma} \label{rem:extend-L-to-K}
    Let $\tilde{K}$ be an abstract simplicial complex with vertices $[m]$, together with an order-preserving map $\tilde{w}:(\tilde{K},\subseteq) \to (\R,\leq)$.
    Then $\tilde{w}$ may be extended to a weight $w$ on the augmented combinatorial $(m-1)$-simplex $K$ by defining $w(\emptyset) = \min(\tilde{w}) - (\max(\tilde{w}) - \min(\tilde{w}))$ and for each $\sigma \in K - \tilde{K}$, $w(\sigma) = \max(\tilde{w}) + (\max(\tilde{w}) - \min(\tilde{w}))$.
\end{lemma}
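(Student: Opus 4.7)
The only non-trivial content is that the extended function $w:K\to\R$ is still order-preserving with respect to $\subseteq$, and this is the entirety of what needs to be checked. The plan is a short case analysis on a pair $\sigma \subseteq \tau$ in $K$, based on where each of $\sigma$ and $\tau$ sits among the three regions: $\tilde K$, the empty face $\emptyset$, and the set of non-empty simplices in $K \setminus \tilde K$.

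The key structural observation, which I would record first, is that since $\tilde K$ is a simplicial complex, $\tau \in \tilde K$ and $\sigma \subseteq \tau$ force $\sigma \in \tilde K$. This eliminates the otherwise awkward case in which $\sigma \in K\setminus\tilde K$ is non-empty and $\tau \in \tilde K$. The remaining cases are then routine. When $\sigma,\tau \in \tilde K$, the inequality is the given order-preserving property of $\tilde w$. When $\sigma = \emptyset$ and $\tau$ is non-empty, one has $w(\emptyset) \le \min(\tilde w)$ while $w(\tau) \ge \min(\tilde w)$, since $w(\tau)$ is either a value of $\tilde w$ or exceeds $\max(\tilde w)$. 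When $\sigma \in \tilde K$ is non-empty and $\tau \in K\setminus\tilde K$ is non-empty, one has $w(\sigma) = \tilde w(\sigma) \le \max(\tilde w) \le w(\tau)$. Finally, when both $\sigma, \tau \in K\setminus\tilde K$ are non-empty, $w(\sigma) = w(\tau)$ by construction.

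There is no real obstacle here; in fact, order preservation alone would already follow from the weaker specifications $w(\emptyset) \le \min(\tilde w)$ and $w(\sigma) \ge \max(\tilde w)$ on the new simplices. I read the extra slack $\max(\tilde w) - \min(\tilde w)$ built into the stated definitions as being included to make $w(\emptyset)$ strictly less than every weight in $\tilde K$ and $w(\sigma)$ strictly greater than every weight in $\tilde K$ for $\sigma \in K\setminus\tilde K$. This strict separation is the natural condition ensuring that birth-death pairs of the extension cleanly split into pairs within $\tilde K$ and pairs involving only the padded simplices, which is presumably what makes the earlier results about weights on the augmented combinatorial simplex transfer back to arbitrary $\tilde K$.
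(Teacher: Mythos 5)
Your proof is correct, and the paper in fact states this lemma without any proof at all, treating the order-preservation check as immediate; your case analysis on where $\sigma$ and $\tau$ lie (in $\tilde K$, equal to $\emptyset$, or nonempty in $K\setminus\tilde K$), together with the observation that $\tilde K$ being a simplicial complex rules out the one problematic case, is exactly the routine verification the authors leave to the reader. Your closing remark about the purpose of the extra slack $\max(\tilde w)-\min(\tilde w)$ also matches how the paper uses the extension immediately afterward, where the strict separation lets the birth-death pairs of $(K,w)$ be converted back into the (possibly infinite) pairs of $(\tilde K,\tilde w)$.
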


If the weighted combinatorial $(m-1)$-simplex $(K,w)$ is obtained from a weighted finite simplicial complex $(\tilde{K},\tilde{w})$ via \cref{rem:extend-L-to-K}, then we may obtain the persistence diagrams of $(\tilde{K},\tilde{w})$, denoted $\PD_k(\tilde{K},\tilde{w})$, from the persistence diagrams of $(K,w)$, by replacing values of $\max(w)$ with $\infty$, and by removing the unique element $(\min(w),b)$ of $\PD_{-1}(K,w)$ and adding the element $(b,\infty)$ to $\PD_0(\tilde{K},\tilde{w})$.
Note that these persistence diagrams of $(\tilde{K},\tilde{w})$ will be elements of the free commutative monoid on the set $(-\infty,\infty]^2_\leq = \{(x,y) \ | \ x,y \in (-\infty,\infty], x \leq y\}$. Let $\PDsetextended$ denote the set of these persistence diagrams. The Wasserstein distances extend to $\PDsetextended$, where they are extended metrics.
Again, we can combine these persistence diagrams to obtain a persistence diagram $\PD(\tilde{K},\tilde{w}) \in D(\Z \times (-\infty,\infty]^2_\leq)$.
Let $\gradedPDsetextended$ denote the set of these persistence diagrams.

Let $\tilde{W} = \{w \in \R^{\tilde{K}} \ | \ w(\sigma) \leq w(\tau), \forall \sigma \subseteq \tau \in \tilde{K} \}$, which we call the \emph{weight space} of $\tilde{K}$.

\begin{theorem}[\cite{cohen2006vines,Skraba:2020}] \label{thm:stability-p}
    Let $1 \leq q \leq \infty$.
    Consider $\tilde{W}$ with the metric obtained from the $q$-norm on $\R^{\tilde{K}}$ and 
    $\gradedPDsetextended$ with the 
    $q$-Wasserstein distance.
    Then 
    $\PD(\tilde{K},-): \tilde{W} \to \gradedPDsetextended$ is $1$-Lipschitz.
\end{theorem}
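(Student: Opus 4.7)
The plan is to reduce to the augmented case, \cref{thm:stability-A}, via the extension construction of \cref{rem:extend-L-to-K}. Given $\tilde{w}, \tilde{w}' \in \tilde{W}$, I would use \emph{global} constants, chosen outside the range of both weights, rather than the weight-specific $\min(\tilde{w}), \max(\tilde{w})$ used in the remark, so that the two extensions differ only on $\tilde{K}$. Concretely, fix $c_1 < \min(\tilde{w}) \wedge \min(\tilde{w}')$ and $c_2 > \max(\tilde{w}) \vee \max(\tilde{w}')$ and extend both weights to $w, w' \in W$ by setting $w(\emptyset) = w'(\emptyset) = c_1$ and $w(\sigma) = w'(\sigma) = c_2$ for all $\sigma \in K \setminus (\tilde{K} \cup \{\emptyset\})$, with $w = \tilde{w}$ and $w' = \tilde{w}'$ on $\tilde{K}$. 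Since $w - w'$ is supported on $\tilde{K}$ and agrees there with $\tilde{w} - \tilde{w}'$, one has $\|w - w'\|_q = \|\tilde{w} - \tilde{w}'\|_q$, and applying \cref{thm:stability-A} yields $W_q(\PD(K, w), \PD(K, w')) \leq \|\tilde{w} - \tilde{w}'\|_q$.

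Next I would upgrade this to a bound on $W_q(\PD(\tilde{K}, \tilde{w}), \PD(\tilde{K}, \tilde{w}'))$ by exhibiting an explicit matching of the extended diagrams. Following the proof of \cref{thm:stability-A}, take the line segment $w_t = (1-t)w + tw'$, which is a PL path in $W$, and decompose $[0,1]$ via \cref{lem:pl-partition} into finitely many intervals on each of which $w_t$ lies in a single region of $\mathcal{P}_{\trianglelefteq}$. On each subinterval, \cref{prop:stability-main} provides a canonical matching coming from the persistence algorithm whose $q$-Wasserstein cost is bounded by the $q$-norm of the endpoint weight difference. Because $c_1$ and $c_2$ are held fixed along the path, this pairing has the same structural form throughout: a single pair $(\emptyset, v)$ with values $(c_1, \cdot)$; a fixed number (equal to $\dim H_k(\tilde{K};\Z/2\Z)$ in each degree $k$) of essential-in-$\tilde{K}$ pairs of the form $(\sigma,\tau)$ with $\sigma \in \tilde{K}$, $\tau \in K \setminus \tilde{K}$, and values $(\cdot, c_2)$; trivial $(c_2, c_2)$ pairs; and internal pairs with both birth and death simplices in $\tilde{K}$.

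Applying the transformation of the paragraph following \cref{rem:extend-L-to-K}, each canonical matching maps to a matching of the corresponding diagrams $\PD(\tilde{K}, \tilde{w}_t)$ of identical cost: $(c_1, b) \leftrightarrow (c_1, b')$ becomes $(b, \infty) \leftrightarrow (b', \infty)$ with the same distance $|b - b'|$; each $(b, c_2) \leftrightarrow (b', c_2)$ becomes $(b, \infty) \leftrightarrow (b', \infty)$, again of distance $|b - b'|$ under the convention $|\infty - \infty| = 0$; trivial $(c_2, c_2)$ pairs become trivial $(\infty, \infty)$ pairs contributing nothing; and the $\tilde{K}$-internal matches are unchanged. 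Summing across the PL partition by the triangle inequality then yields $W_q(\PD(\tilde{K}, \tilde{w}), \PD(\tilde{K}, \tilde{w}')) \leq \|\tilde{w} - \tilde{w}'\|_q$.

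The main obstacle is the bookkeeping in the third step: verifying that the transformed matchings are well-defined in the extended $q$-Wasserstein metric on $\gradedPDsetextended$ and that costs are preserved when passing from $\R^2_\leq$ to $(-\infty,\infty]^2_\leq$. This hinges on the structural rigidity of the pairing afforded by the fixed values $c_1, c_2$ and on the convention that a matched pair with both coordinates equal to $\infty$ contributes zero in that coordinate.
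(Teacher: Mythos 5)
Your proof is correct and follows the same route as the paper: extend $\tilde{w},\tilde{w}'$ to weights on the augmented combinatorial $(m-1)$-simplex and invoke \cref{thm:stability-A}. You are in fact more careful than the paper's four-line proof on the two points it glosses over — choosing global constants $c_1,c_2$ common to both extensions so that $\norm{w-w'}_q = \norm{\tilde{w}-\tilde{w}'}_q$ (the weight-specific constants of \cref{rem:extend-L-to-K} would not give this equality), and checking that the passage from $\PD(K,\cdot)$ to $\PD(\tilde{K},\cdot)$, which sends deaths at $c_2$ to $\infty$ and converts the degree-$(-1)$ point to an essential degree-$0$ point, preserves the cost of the canonical matchings.
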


\begin{proof}
    Let $\tilde{w}',\tilde{w}'' \in \tilde{W}$. 
    Let $K$ denote the augmented combinatorial $(m-1)$-simplex and
    let $W\subset \R^{2^m}$ denote the weight space of $K$. 
    By \cref{rem:extend-L-to-K}, extend $\tilde{w}', \tilde{w}''$ to $w',w'' \in W$.
    The result follows from \cref{thm:stability-A}.
\end{proof}

\begin{theorem}
    Let $0 \leq k \leq m-1$.
    Consider $\tilde{W}$ with the metric obtained from the $\infty$-norm on $\R^{\tilde{K}}$ and 
    $\PDsetextended$ with the 
    $1$-Wasserstein distance.
    Then
    $\PD_k(\tilde{K},-): \tilde{W} \to \PDsetextended$ is $2 f_k^+$-Lipschitz.
\end{theorem}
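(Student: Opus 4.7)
The plan is to adapt the proof of \cref{thm:stability-3} to the $\tilde{K}$ setting by reducing to the augmented combinatorial $(m-1)$-simplex $K$ via a variant of \cref{rem:extend-L-to-K} in which the two weights are extended so as to agree off $\tilde{K}$. Given $\tilde{w}', \tilde{w}'' \in \tilde{W}$, I would pick constants $d, c \in \R$ with $d$ smaller than every value of $\tilde{w}'$ and $\tilde{w}''$ and $c$ larger than every such value, and extend to $w', w'' \in W$ by setting $w'(\emptyset) = w''(\emptyset) = d$ and $w'(\sigma) = w''(\sigma) = c$ for every $\sigma \in K \setminus (\tilde{K} \cup \{\emptyset\})$. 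Because the extensions agree off $\tilde{K}$, one has $\norm{w' - w''}_\infty = \norm{\tilde{w}' - \tilde{w}''}_\infty$.

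Fix a linear extension $\trianglelefteq$ of the subset order on $K$ and consider the line segment $w:[0,1]\to W$ from $w'$ to $w''$; this is a PL curve in $W$ on which the values at simplices outside $\tilde{K}$ are constantly $d$ or $c$. By \cref{lem:pl-partition} I partition $[0,1]$ into intervals $[t_{i-1},t_i]$ on each of which $w$ lies in a single region of $\mathcal{P}_\trianglelefteq$, and by \cref{prop:pl-main}, on each such interval $\PD_k(K,w_t)$ is the formal sum of $f_k^+$ terms $(w_t(\sigma_j),w_t(\tau_j))$ for a fixed list of pairs $(\sigma_j,\tau_j)$. Because every simplex of $K\setminus\tilde{K}$ has weight $c$ exceeding every weight on $\tilde{K}$, the order $\leq_{(w_t,\trianglelefteq)}$ always places the simplices of $\tilde{K}$ before those of $K\setminus\tilde{K}$; consequently a persistence algorithm pairs within $\tilde{K}$ first, and only then pairs the essential classes of $\tilde{K}$ with simplices from $K\setminus\tilde{K}$. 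Thus the $f_k^+$ pairs split into three types: (a) both in $\tilde{K}$, producing the finite pairs of $\PD_k(\tilde{K},\tilde{w}_t)$; (b) $\sigma_j\in\tilde{K}$ and $\tau_j\in K\setminus\tilde{K}$, producing essential pairs $(\tilde{w}_t(\sigma_j),\infty)$ of $\PD_k(\tilde{K},\tilde{w}_t)$; and (c) both in $K\setminus\tilde{K}$, contributing the point $(c,c)$, which does not appear in $\PD_k(\tilde{K},\tilde{w}_t)$. In particular, the number of type~(a) and type~(b) pairs together is at most $f_k^+$.

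Because the pairing is constant on each interval, I obtain an explicit matching (finite to finite, essential to essential) between $\PD_k(\tilde{K},\tilde{w}_{t_{i-1}})$ and $\PD_k(\tilde{K},\tilde{w}_{t_i})$ whose $1$-Wasserstein cost is a sum of terms $|\tilde{w}_{t_{i-1}}(\rho) - \tilde{w}_{t_i}(\rho)|$ for $\rho\in\tilde{K}$, with at most two terms per type~(a) pair and one per type~(b) pair (using $|\infty-\infty|=0$), hence at most $2 f_k^+$ in total. Each term is bounded by $\norm{\tilde{w}_{t_{i-1}}-\tilde{w}_{t_i}}_\infty$, giving $W_1(\PD_k(\tilde{K},\tilde{w}_{t_{i-1}}),\PD_k(\tilde{K},\tilde{w}_{t_i})) \leq 2 f_k^+ \norm{\tilde{w}_{t_{i-1}}-\tilde{w}_{t_i}}_\infty$. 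The triangle inequality for $W_1$ together with $\sum_i \norm{\tilde{w}_{t_{i-1}}-\tilde{w}_{t_i}}_\infty = \norm{\tilde{w}'-\tilde{w}''}_\infty$ on the line segment then yields the claimed Lipschitz bound. The main subtlety is the pairing-translation step: rigorously verifying that the pairing on $K$ restricts to the correct pairing on $\tilde{K}$ hinges on the fact that all simplices in $K\setminus\tilde{K}$ come strictly last in $\leq_{(w_t,\trianglelefteq)}$, which is precisely why the agreeing extension (rather than the one in \cref{rem:extend-L-to-K}) is used.
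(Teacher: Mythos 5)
Your proof is correct and follows the same core strategy as the paper --- reduce to the augmented combinatorial $(m-1)$-simplex $K$ by extending the weights --- but it is substantially more careful at two points the paper elides. The paper's proof is two lines: extend $\tilde{w}',\tilde{w}''$ to $w',w''\in W$ and cite \cref{thm:stability-3}. As written, that leaves two gaps that you address. First, if one applies the extension of \cref{rem:extend-L-to-K} to each weight separately, the values assigned off $\tilde{K}$ depend on $\min(\tilde{w})$ and $\max(\tilde{w})$ and need not agree, so $\norm{w'-w''}_\infty$ can exceed $\norm{\tilde{w}'-\tilde{w}''}_\infty$ (by a factor up to $3$); your choice of common constants $d$ and $c$ fixes this and is the right way to read the paper's intent. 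Second, \cref{thm:stability-3} bounds the distance between diagrams in $\PDset$ on $K$, whereas the theorem concerns diagrams in $\PDsetextended$ on $\tilde{K}$ obtained by the translation described after \cref{rem:extend-L-to-K}; your case analysis (finite pairs within $\tilde{K}$ costing at most two terms, essential pairs costing one, pairs entirely in $K\setminus\tilde{K}$ landing on the diagonal and costing nothing) verifies that the $2f_k^+$ bound survives this translation. The price of your route is that you essentially re-derive \cref{thm:stability-3} via \cref{lem:pl-partition} and \cref{prop:pl-main} rather than quoting it; the benefit is a self-contained argument in which the passage to extended diagrams is actually justified rather than implicit.
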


\begin{proof}
    Let $\tilde{w}',\tilde{w}'' \in \tilde{W}$. 
    Extend $\tilde{w}', \tilde{w}''$ to $w',w'' \in W$.
    The result follows from \cref{thm:stability-3}.
\end{proof}

\begin{remark}
    These results easily generalize to cell complexes~\cite{Skraba:2020}.
\end{remark}

\section{Time series and sliding windows}\label{sec:time-series-sliding-windows}

In this section we discuss our primary motivating example, namely multivariate time series and their sliding window embeddings.

\subsection{Time series} \label{sec:time-series}

We start by introducing notation for time series and their distance matrices.

Our data consists of a finite sequence $(x_1,\ldots,x_n)$ in $\R^p$, called a \emph{time series}.
We interpret a time series as $n$ observations of $p$ variables.
We organize our data using matrix notation as follows.
For $1 \leq i \leq n$, we have 
$x_i  = \begin{pmatrix} x_{i1} & x_{i2} & \cdots & x_{ip}\end{pmatrix}^\tran \in \R^p$, and let
\begin{equation*}
    X = \begin{bmatrix}
        x_{1 1} & x_{1 2} & \cdots & x_{1 p} \\
        x_{2 1} & x_{2 2} & \cdots & x_{2 p} \\
        \vdots & \vdots & \ddots & \vdots \\
        x_{n 1} & x_{n 2} & \cdots & x_{n p} 
    \end{bmatrix}.
\end{equation*} 
We consider $(x_1,\ldots,x_n)$ to be a finite ordered metric space with the metric induced by the $1$-norm on $\R^p$.
Let $D$ be the corresponding distance matrix $(d(x_k,x_\ell))_{1 \leq k,\ell \leq n}$,
where $d$ is the metric induced by the $1$-norm.

For each $1 \leq j \leq p$, we have a \emph{component time series} 
$(x_{1j}, x_{2j}, \ldots, x_{nj})$ in $\R$. 
We let $x^j = \begin{pmatrix} x_{1j} & x_{2j} & \cdots & x_{nj} \end{pmatrix}^\tran \in \R^n$.
Note that $X = \begin{pmatrix} x^1 & x^2 & \cdots & x^p \end{pmatrix}$.
We consider each $x^j$
to be a finite ordered metric space with $d(x,y) = \abs{x-y}$.
Let $D^j$ be the corresponding distance matrix $(d(x_{k j},x_{\ell j}))_{1 \leq k,\ell \leq n}$.
For $1 \leq p \leq q \leq n$, let 
$D^j[p,\ldots,q; p,\ldots, q]$
denote the submatrix of $D^j$ that is the distance matrix for the subsequence $(x_{p j}, \ldots, x_{q j})$.

\subsection{Sliding windows} \label{sec:sliding-windows}

Next, we extend our notation to sliding windows.

Given an $n \times p$ matrix $X$ of $n$ observations in $p$ variables and $L \in \N$,
we define a new matrix $\bar{X}$ whose observations are the $L$ consecutive observations of $X$.
We call $L$ the \emph{window length} and $\bar{X}$ the \emph{sliding window matrix}.
In detail, $\bar{X}$ is the $(n-L+1) \times Lp$  matrix defined as follows.
\begin{equation*}
    X = 
    \begin{bmatrix}
        x_1^\tran \\ x_2^\tran \\ \vdots \\ x_n^\tran
    \end{bmatrix}
    \quad \quad \quad \quad 
    \overline{X} = 
    \begin{bmatrix}
        x_1^\tran & x_2^\tran & \cdots & x_L^\tran\\
        x_2^\tran & x_3^\tran & \cdots & x_{L+1}^\tran\\
        \vdots & \vdots & \ddots & \vdots\\
        x_{n-L+1}^\tran & x_{n-L+2}^\tran & \cdots & x_n^\tran
    \end{bmatrix}
\end{equation*}
The \emph{sliding window time series} is given by the rows of $\overline{X}$. 
That is, it is the sequence $(\overline{x}_1, \overline{x}_2, \ldots, \overline{x}_{n-L+1})$, where 
$\overline{x}_i = \begin{pmatrix} x_i^\tran & x_{i+1}^\tran & \cdots & x_{i+L-1}^
\tran \end{pmatrix}^\tran \in \R^{Lp}$.
As above, we consider it to be a finite ordered metric space with the metric induced by the $1$-norm on $\R^{L p}$.
Let $\overline{D}$ denote the corresponding distance matrix $(d(\overline{x}_k,\overline{x}_\ell))_{1 \leq k,\ell \leq n-L+1}$.

Similarly, for each $1 \leq j \leq p$, we have the \emph{component sliding window matrix} $\overline{X}^j$, which is the $(n-L+j) \times L$ matrix given as follows.
\begin{equation*}
    X^j = 
    \begin{bmatrix}
        x_{1 j} \\ x_{2 j} \\ \vdots \\ x_{n j}
    \end{bmatrix}
    \quad \quad \quad \quad 
    \overline{X}^j = 
    \begin{bmatrix}
        x_{1 j} & x_{2 j} & \cdots & x_{L j}\\
        x_{2 j} & x_{3 j} & \cdots & x_{{L+1}, j}\\
        \vdots & \vdots & \ddots & \vdots\\
        x_{{n-L+1}, j} & x_{{n-L+2}, j} & \cdots & x_{n j}
    \end{bmatrix}
\end{equation*}
The \emph{component sliding window time series} is given by the rows of $\overline{X}^j$.
That is, it is the sequence $(\overline{x}^j_1, \overline{x}^j_2, \ldots, \overline{x}^j_{n-L+1})$, where $\overline{x}^j_i = \begin{pmatrix} x_{i j} & x_{i+1,j} & \cdots & x_{i+L-1,j} \end{pmatrix}^\tran \in \R^L$.
As above, we consider it to be a finite ordered metric space with the metric induced by the $1$-norm on $\R^L$.
Let $\overline{D}^j$ denote the corresponding distance matrix $(d(\overline{x}_k^j,\overline{x}_\ell^j))_{1 \leq k,\ell \leq n-L+1}$
where $d$ is the metric induced by the $1$-norm.

\begin{proposition} \label{lem:swe}
    We have the following relationships between the distance matrices for the time series, component time series, sliding window time series, and component sliding window time series:
    \begin{equation*}
        D = \sum_{j=1}^p D^j, \quad \overline{D} = \sum_{j=1}^p \overline{D}^j, \quad \text{and} \quad
        \overline{D}^j = \sum_{q=0}^{L-1} 
        D^j[1+q,\ldots,n-L+q; 1+q,\ldots,n-L+q]
    \end{equation*}
\end{proposition}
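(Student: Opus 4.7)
The plan is to prove all three identities by unwinding the definitions and using the basic fact that $\|u - v\|_1 = \sum_{j} |u_j - v_j|$ for vectors $u, v$ in a Euclidean space. None of the three statements requires heavy machinery; the only real work is careful bookkeeping of indices, especially for the third identity.

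For the first identity $D = \sum_{j=1}^{p} D^{j}$, I would compute the $(k,\ell)$ entry of $D$ directly: $D_{k\ell} = \|x_{k} - x_{\ell}\|_1 = \sum_{j=1}^{p} |x_{kj} - x_{\ell j}| = \sum_{j=1}^{p} D^{j}_{k\ell}$, since $D^{j}_{k\ell} = |x_{kj} - x_{\ell j}|$ by the definition of the component distance matrix. For the second identity $\overline{D} = \sum_{j=1}^{p} \overline{D}^{j}$, I would compute $\overline{D}_{k\ell} = \|\overline{x}_{k} - \overline{x}_{\ell}\|_1$. Since $\overline{x}_{k}, \overline{x}_{\ell} \in \R^{Lp}$ are the concatenations of $L$ observations of $p$ variables, expanding the $1$-norm and interchanging the resulting two finite sums gives
\[
\overline{D}_{k\ell} = \sum_{i=0}^{L-1} \sum_{j=1}^{p} |x_{k+i,j} - x_{\ell+i,j}| = \sum_{j=1}^{p} \sum_{i=0}^{L-1} |x_{k+i,j} - x_{\ell+i,j}| = \sum_{j=1}^{p} \overline{D}^{j}_{k\ell},
\]
where the last equality uses the definition of $\overline{D}^{j}$ as the $\ell^1$ distance matrix of the component sliding window time series.

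The third identity $\overline{D}^{j} = \sum_{q=0}^{L-1} D^{j}_{1+q,\, n-L+q}$ requires the most care, but is still just an index reshuffle. For fixed $j$ and $q$, the submatrix $D^{j}_{1+q,\, n-L+q}$ is, by definition, the $\ell^1$ distance matrix of the consecutive subsequence $(x_{1+q,j}, x_{2+q,j}, \ldots)$ of the $j$th component time series. Its $(k,\ell)$ entry (in the inherited indexing starting at position $1+q$) equals $|x_{k+q,j} - x_{\ell+q,j}|$. Summing these submatrices over $q = 0, 1, \ldots, L-1$ and again applying the expansion of the $1$-norm gives
\[
\sum_{q=0}^{L-1} |x_{k+q,j} - x_{\ell+q,j}| = \|\overline{x}^{j}_{k} - \overline{x}^{j}_{\ell}\|_1 = \overline{D}^{j}_{k\ell},
\]
as required.

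The only step that could plausibly cause trouble is the third one, because of the slight tension between the ranges of the outer index $k,\ell \in \{1, \ldots, n-L+1\}$ of $\overline{D}^{j}$ and the inherited indexing of the subsequence submatrices $D^{j}_{1+q,\, n-L+q}$. I would handle this by fixing once and for all the convention that the submatrix $D^{j}_{1+q,\, n-L+q}$ is re-indexed so that its $(k,\ell)$ position corresponds to the entry $|x_{k+q,j} - x_{\ell+q,j}|$; with that convention the summation identity is immediate from the expansion of $\|\overline{x}^{j}_{k} - \overline{x}^{j}_{\ell}\|_1$ above. No topological or persistence-theoretic input is needed for this proposition — it is a purely algebraic lemma about distance matrices that sets up the efficient computation of Vietoris--Rips filtrations of sliding window embeddings later in the paper.
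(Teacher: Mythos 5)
Your proposal is correct and follows essentially the same route as the paper: the first two identities by expanding the $1$-norm coordinatewise, and the third by the entrywise computation $\overline{D}^{j}_{k\ell} = \sum_{q=0}^{L-1} \abs{x_{k+q,j} - x_{\ell+q,j}} = \sum_{q=0}^{L-1} (D^{j}_{1+q,\,n-L+q})_{k\ell}$, with the same implicit re-indexing convention for the submatrices. Your explicit remark about that indexing convention is a reasonable clarification but not a departure from the paper's argument.
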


\begin{proof}
    The first two identities follow from the definition of the $1$-norm.
    For the third,
    \begin{multline*}
        \overline{D}_{k \ell}^j 
        = d(\overline{x}_k^j,\overline{x}_\ell^j) 
        = \sum_{q=0}^{L-1} \abs{x_{k+q,j} - x_{\ell+q,j}} 
        = \sum_{q=0}^{L-1} D_{k+q,\ell+q}^j \\
        = \sum_{q=0}^{L-1} 
        D^j[1+q,\ldots,n-L+q; 1+q,\ldots,n-L+q]_{k \ell}.
    \end{multline*}
\end{proof}

\cref{lem:swe} says that when using the $1$-norm, the distance matrix for the sliding window embedding of a multivariate time series is the sum of the distance matrices of the sliding window embeddings of the component time series. 
Furthermore, the distance matrix for the sliding window embedding is a convolution of the distance matrices for the sliding window embedding with a window length of one.

\subsection{Linear-combination time series}

We now consider linear combinations in our constructions.

Consider the time series $(x_1,\ldots,x_n)$ in $\R^p$ and let $v \in \R^p$.
We define the \emph{linear-combination time series} $(v \odot x_1, \ldots, v \odot x_n)$ in $\R^p$, where
$v \odot x_i$ is the Hadamard product defined as $\begin{pmatrix} v_1 x_{i 1} & v_2 x_{i 2} & \cdots & v_p x_{i p} \end{pmatrix}^\tran$.
Following the notation in Sections~\ref{sec:time-series} and \ref{sec:sliding-windows}, we 
define 
\begin{equation*}
    vX = \begin{bmatrix}
        v_1 x_{1 1} & v_2 x_{1 2} & \cdots & v_p x_{1 p} \\
        v_1 x_{2 1} & v_2 x_{2 2} & \cdots & v_p x_{2 p} \\
        \vdots & \vdots & \ddots & \vdots \\
        v_1 x_{n 1} & v_2 x_{n 2} & \cdots & v_p x_{n p} 
    \end{bmatrix}.
\end{equation*} 
For $1 \leq j \leq p$, we have the \emph{linear-combination component time series} $(v_j x_{1 j}, \ldots, v_j x_{n j})$ in $\R$.
It has distance matrix $\abs{v_j} D^j$.
The linear-combination time series has distance matrix is $\sum_{j=1}^p \abs{v_j} D^j$, which we denote $vD$.
For $1 \leq j \leq p$, we have the \emph{linear-combination component sliding window time series}
$(v_j \overline{x}_1^j, \ldots, v_j \overline{x}_{n-L+1}^j)$ in $\R$,
which has distance matrix $\abs{v_j} \overline{D}^j$.
Furthermore, we have
the \emph{linear-combination sliding window time series} $(v \odot \overline{x}_1, \ldots, v \odot \overline{x}_{n-L+1})$ in $\R^p$,
where we have rearranged 
$\overline{x}_i$ to be $\begin{pmatrix}
    (\overline{x}_i^1)^\tran & (\overline{x}_i^2)^\tran & \cdots & (\overline{x}_i^p)^\tran
\end{pmatrix}^\tran$ 
so that
$v \odot \overline{x}_i = \begin{pmatrix}
    (v_1 \overline{x}_i^1)^\tran & (v_2 \overline{x}_i^2)^\tran & \cdots & (v_p \overline{x}_i^p)^\tran
\end{pmatrix}^\tran$. 
It has distance matrix $\sum_{j=1}^p \abs{v_j} \overline{D}^j$, which we denote $v\overline{D}$.
This decomposition, which follows from \cref{lem:swe} applied with scaling
factors $v_j$, is the key consequence of using the $1$-norm:\ it ensures that the distance matrix of the linear-combination sliding window time series is a linear-combination of the component distance matrices, which is the object over which we optimize in \cref{problem:tfs-time-series} below.

\subsection{Sliding window persistence diagrams}

Finally, we take persistence diagrams.

Given a time series $(x_1,\ldots,x_n)$ in $\R^p$ and window length $L$,
we have the $m \times m$ sliding window distance matrix $\overline{D}$, 
where $m = n - L + 1$.
Let $K$ be the augmented combinatorial $(m-1)$ simplex and let $-1 \leq k \leq m-1$.
Then we have the degree-$k$ Vietoris--Rips persistence diagram $\PD_k(K,w_{\overline{D}})$.

Now if we take linear combinations the component time series using $v \in \R^p$, we obtain a function 
$\Psi: \R^p \to \PDset$ given by $\Psi(v) = \PD_k(K,w_{v\overline{D}})$.
Note that $v\overline{D}$ is invariant under sign changes of the coordinates of $v$.
Thus we will restrict $\Psi$ to $\left(\R_{\geq 0 }\right)^p$.
Also note that for $\lambda \geq 0$, $(\lambda v)\overline{D} = \lambda (v\overline{D})$, and for any weight $w$ and distance matrix $d$, $w_{\lambda d} = \lambda w_d$ and $\PD_k(K,\lambda w) = \lambda \PD_k(K,w)$.
Therefore $\Psi(\lambda v) = \lambda \Psi(v)$.
Hence, we will further restrict $\Psi$ to the standard geometric $(p-1)$-simplex 
$\{v \in \R_{\geq 0}^p \ | \ \sum_{i=1}^p v_i = 1\}$, which we denote $\abs{\Delta^{p-1}}$.

\section{Topological feature selection}\label{sec:feature-selection}

In this section we introduce optimization problems which we call topological feature selection. 
For our motivating example of a multivariate time series, the problem is to find the convex combination of the variables that maximizes a functional on the persistence diagram of the corresponding convex-combination distance matrix. 
More generally, we are given a finite collection of weights and we search for the convex combination of these weights that maximizes a functional on the persistence diagram of the weighted augmented combinatorial simplex.

\begin{problem}[Topological feature selection for a time series]
\label{problem:tfs-time-series}
\textnormal{
    Let $(x_1,x_2,\ldots,x_n)$  be a time series in $\R^p$ and fix a window length $L$.
    Let $\overline{D}$ denote the distance matrix for the corresponding sliding window time series.
    Let $m = n-L+1$, 
    let $K$ be the augmented combinatorial $(m-1)$-simplex. 
    Define 
    $\Psi \colon \abs{\Delta^{p-1}} \to \gradedPDset$ by
    $\Psi(v) = \PD(K,w_{v\overline{D}})$.
    Given $F:\gradedPDset \to \R$, we have the following optimization problem.
    \begin{gather*}
        \text{maximize}_v \quad (F \circ \Psi)(v)\\
        \text{subject to} \quad v \in \abs{\Delta^{p-1}}
    \end{gather*}
}
\end{problem}

For example, $F$ may map a persistence diagram to the sum of the lifetimes of all points, called \emph{total persistence}, or it may map the diagram to the lifetime of the point furthest from the diagonal, called \emph{maximum persistence}.

We can also consider the following more general optimization problem.

\begin{problem}[Topological feature selection for $p$ 
matrices]
\label{problem:tfs-matrices}
\textnormal{
Recall that $\DM$ is the set of symmetric $m \times m$ matrices $M$ satisfying $M_{ii} \leq M_{ij}$ for all $i,j$.    
Consider $M^1,\ldots,M^p \in \DM$.
    For $v \in \abs{\Delta^{p-1}}$, we have $vM = \sum_{i=1}^p v_i M^i$.
    Let $K$ be the augmented combinatorial $(m-1)$ simplex. 
    Define $\Psi:\abs{\Delta^{p-1}} \to \gradedPDset$ by 
        $\Psi(v) = \PD(K,w_{vM})$.
    Given $F:\gradedPDset \to \R$, we have the following optimization problem.
    \begin{gather*}
        \text{maximize}_v \quad (F \circ \Psi)(v)\\
        \text{subject to} \quad v \in \abs{\Delta^{p-1}}
    \end{gather*}
}
\end{problem}

In fact, we will consider the following even more general optimization problem. 

\begin{problem}[Topological feature selection for $p$ weights]
\label{problem:tfs-weights}
\textnormal{
    Let $K$ be the augmented combinatorial $(m-1)$-simplex.
    Let $W \subset \R^{2^m}$ be the weight space for $K$.
    Assume we have weights $w = (w_1,\ldots,w_p) \in W^p$.
    Such weights induce a map $\abs{\Delta^{p-1}} \to W$ given by mapping $v=(v_1,\ldots,v_p)$ to $\sum_{i=1}^p v_i w_i$, which we denote $v \cdot w$.
    Define $\Psi: \abs{\Delta^{p-1}} \to \gradedPDset$ by 
        $\Psi(v) = \PD(K,v \cdot w)$.
    Given $F:\gradedPDset \to \R$, we have the following optimization problem.
    \begin{gather*}
        \text{maximize}_v \quad (F \circ \Psi)(v)\\
        \text{subject to} \quad v \in \abs{\Delta^{p-1}}
    \end{gather*}
}
\end{problem}

\section{Optimization}
\label{sec:optimization} 

In this section, we give convergent algorithms to the topological feature selection problems in Section~\ref{sec:feature-selection} using gradient descent and also  using a more generally applicable stochastic subgradient descent.

\subsection{Topological feature selection and piecewise linearity}
\label{sec:tfs-pl}

Let $K$ be the augmented combinatorial $(m-1)$-simplex and let $W \subset \R^{2^m}$ be the weight space of $K$.
Recall that $\abs{\Delta^{m-1}} \subset \R^m$ lies in the hyperplane $\Pi = \{v \in \R^m \ | \ v_1 + \cdots + v_m = 1\}$.
This hyperplane has normal vector $\mathbf{1}=(1, 1, \dots, 1)\in \R^{m}$.
Let $\Pi_0$ denote the $(m-1)$-dimensional subspace of $\R^m$ orthogonal to $\mathbf{1}$.

Let $\trianglelefteq$ be a linear extension of the subset order on $K$
and let $\mathcal{A}$ be a choice of persistence algorithm.
Let $\mathcal{P}_{\trianglelefteq}$ be the order partition of $W$ induced by $\trianglelefteq$.
Recall that we call elements of a partition regions.
By \cref{lem:regions}, the regions are $2^m$-dimensional polyhedrons.
The total order $\leq_{(w,\trianglelefteq)}$ on $K$ is constant on regions.
We have a birth-death matching $\BD^\mathcal{A}(K,\leq_{(w,\trianglelefteq)})$ that is constant on regions
and we have the function $\PD(K,-): W \to \gradedPDset$.

\begin{definition} 
Let $\hat{\mathcal{P}}_\trianglelefteq$ be a partition of $W$ consisting of $2^m$-dimensional polyhedra that is a refinement of $\mathcal{P}_{\trianglelefteq}$.
We say that a function $F: \gradedPDset \to \R$ is \emph{PL for $\hat{P}_\trianglelefteq$} if 
$F(\PD(K,-)): W \to \R$ is continuous and is affine on each region of $\hat{\mathcal{P}}_\trianglelefteq$.
That is, $F(\PD(K,-))$ is continuous and for $A \in \hat{\mathcal{P}}_\trianglelefteq$ and for all $w \in A$, $F(\PD(K,w)) = w \cdot a + b$ for some $a \in \R^{2^m}$ and $b \in \R$.
Similarly, given $-1 \leq k \leq m-1$, we say that a function $F: \PDset \to \R$ is \emph{PL for $\hat{P}_\trianglelefteq$} if 
$F(\PD_k(K,-)): W \to \R$ is continuous and is affine on each region of $\hat{\mathcal{P}}_\trianglelefteq$.
\end{definition}

For example, the total persistence of a persistence diagram is PL for $\hat{\mathcal{P}}_\trianglelefteq= \mathcal{P}_\trianglelefteq$.
The maximum persistence of a persistence diagram is PL for a proper subpartition in which each region has a constant birth-death pair with maximum persistence.

Now, we address \cref{problem:tfs-weights} for a function $F\colon \gradedPDset \to \R$ that is PL for such a $\hat{P}_\trianglelefteq$. 
Assume  we have weights $w = (w_1,\ldots,w_p) \in W^p$.
We have the following composite map.
\begin{equation} \label{eq:tfs-map}
    F(\PD(K,- \cdot w)): \abs{\Delta^{p-1}} \xrightarrow{- \cdot w} W \xrightarrow{\PD(K,-)} \gradedPDset \xrightarrow{F} \R
\end{equation}

\begin{lemma} \label{lem:partition-pullback}
    Let $\mathcal{P}$ be a finite partition of $W$ consisting of 
    polyhedra.
    The map $- \cdot w: \abs{\Delta^{p-1}} \to W$ induces a finite partition $\mathcal{S}$ of $\abs{\Delta^{p-1}}$ consisting of polytopes (bounded polyhedra).
\end{lemma}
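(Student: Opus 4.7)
The plan is to realize $- \cdot w : \abs{\Delta^{p-1}} \to W$ as the restriction of a linear map $L: \R^p \to \R^{2^m}$, namely $L(v) = \sum_{i=1}^p v_i w_i$, and then pull back the partition $\mathcal{P}$ through $L$. The main observation enabling this is that $L\bigl(\abs{\Delta^{p-1}}\bigr) \subseteq W$: since each $w_i$ is order-preserving and the coefficients $v_i$ are nonnegative, any convex combination $\sum_i v_i w_i$ of the $w_i$ is again an order-preserving map $(K,\subseteq) \to (\R,\leq)$, hence a weight. From this it follows formally that the sets
\[ \mathcal{S} = \bigl\{ L^{-1}(A) \cap \abs{\Delta^{p-1}} \ \big| \ A \in \mathcal{P} \bigr\} \]
(with empty members discarded) partition $\abs{\Delta^{p-1}}$, and $\mathcal{S}$ is finite because $\mathcal{P}$ is.

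The remaining step is to argue that each nonempty member of $\mathcal{S}$ is a polytope. By hypothesis, each region $A \in \mathcal{P}$ is a polyhedron, cut out in $\R^{2^m}$ by finitely many affine (in)equalities of the form $\ell(w) \bowtie c$, where $\ell$ is an affine functional, $c \in \R$, and ${\bowtie} \in \{<, \leq, =\}$. Precomposing any such inequality with the linear map $L$ yields an affine (in)equality $(\ell \circ L)(v) \bowtie c$ on $\R^p$, so $L^{-1}(A)$ is an intersection of finitely many affine (in)equalities in $\R^p$, i.e., a polyhedron. Its intersection with the polytope $\abs{\Delta^{p-1}}$ is therefore a bounded polyhedron, which is a polytope.

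I do not anticipate any serious obstacle: the whole argument amounts to the facts that (i) linear preimages of polyhedra are polyhedra, (ii) intersecting a polyhedron with a bounded polytope yields a polytope, and (iii) convex combinations of weights are weights. The only thing worth checking carefully is that the conclusion is insensitive to the precise mix of strict versus non-strict (in)equalities defining the regions — relevant because the partition $\mathcal{P}_\trianglelefteq$ and its refinements $\hat{\mathcal{P}}_\trianglelefteq$ will in general include both closed and non-closed pieces of the hyperplane arrangement $\{w(\sigma) = w(\tau)\}$. Since linear preimages preserve the sign of any affine (in)equality, strictness is preserved under pullback, and the argument is uniform.
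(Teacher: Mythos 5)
Your proof is correct and follows essentially the same route as the paper's: pull back the partition along the affine map $-\cdot w$, observe that affine preimages of polyhedra are polyhedra, and use the boundedness of $\abs{\Delta^{p-1}}$ to conclude the regions are polytopes. Your additional checks (that convex combinations of weights are weights, and that strict versus non-strict inequalities cause no trouble) are details the paper leaves implicit but do not change the argument.
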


\begin{proof}
    Let $\mathcal{S}$ be the set consisting of the nonempty inverse images of the regions in $\mathcal{P}$ for the map $- \cdot w: \abs{\Delta^{p-1}} \to W$.
    Then $\mathcal{S}$ is partition of $\abs{\Delta^{p-1}}$ called the pullback of $\mathcal{P}$ along $- \cdot w$.
    Since $- \cdot w$ is an affine map, the regions of $\mathcal{S}$ are polyhedrons.
    Since $\abs{\Delta^{p-1}}$ is compact, the regions of $\mathcal{S}$ are polytopes.
\end{proof}

We have the following.

\begin{proposition}
    Consider \cref{problem:tfs-weights}. 
    Let $\hat{\mathcal{P}}_\trianglelefteq$ be a partition of $W$ consisting of $2^m$-dimensional polyhedra that is a refinement of $\mathcal{P}_{\trianglelefteq}$.
    Let $\mathcal{S}$ be the partition of $\abs{\Delta^{p-1}}$ given by the inverse images of the regions in $\hat{\mathcal{P}}_\trianglelefteq$ for the map $- \cdot w: \abs{\Delta^{p-1}} \to W$.
    Assume that $F$ is PL for $\hat{\mathcal{P}}_\trianglelefteq$.
    Then $F(- \cdot w): \abs{\Delta^{p-1}} \to \R$ is continuous and affine on each region of $\mathcal{S}$.
    However, in general it is not convex on $\abs{\Delta^{p-1}}$.
\end{proposition}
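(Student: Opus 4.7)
The plan is to treat the two claims separately. The first, linearity on each region of $\mathcal{S}$, is essentially a formalization of the computation carried out in the paragraph immediately preceding the proposition. The second, failure of global convexity, is most efficiently handled by exhibiting a small explicit counterexample. I expect the only nontrivial work is the counterexample; the linearity claim is entirely routine once the right lemmas are cited.

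For the linearity claim, I would first apply \cref{lem:partition-pullback} to the finite polyhedral partition $\hat{\mathcal{P}}_\trianglelefteq$ and the affine map $v\mapsto v\cdot w$, producing $\mathcal{S}$ as a finite partition of $\abs{\Delta^{p-1}}$ into polytopes. Fix a region $S\in \mathcal{S}$, and let $A\in \hat{\mathcal{P}}_\trianglelefteq$ be the unique region whose preimage contains $S$. The hypothesis that $F$ is PL for $\hat{\mathcal{P}}_\trianglelefteq$ supplies $a\in \R^{2^m}$ and $b\in \R$ with $F(\PD(K,w'))=w'\cdot a + b$ for all $w'\in A$. Substituting $w'=\sum_i v_i w_i$ and using bilinearity of the dot product gives, for all $v\in S$, $F(\Phi(v)) = v\cdot c + b$ where $c=(w_1\cdot a,\ldots,w_p\cdot a)^\tran \in \R^p$. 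Combined with the affine equality $\sum_i v_i=1$, the nonnegativity constraints $v_i\geq 0$, and the finitely many affine inequalities cutting out $S$, the restriction of \cref{problem:tfs-weights} to $S$ is a linear program, i.e.\ linear in the sense claimed.

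For the non-convexity claim, the plan is to give an explicit small example. Take $p=2$ and $m=3$, and identify $\abs{\Delta^{p-1}}$ with $[0,1]$ via $v=(t,1-t)$. Let $M^1, M^2\in \DM$ be the $3\times 3$ symmetric matrices with zero diagonals and off-diagonal triples $(M^1_{12},M^1_{13},M^1_{23})=(1,2,3)$ and $(M^2_{12},M^2_{13},M^2_{23})=(3,2,1)$, and set $w_j=w_{M^j}$. Take $F$ to be total persistence. The convex combination $tM^1+(1-t)M^2$ then has off-diagonal entries $(3-2t,\,2,\,1+2t)$, and the ordering of these three entries flips at $t=1/2$. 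A direct reading of the Vietoris--Rips filtration (noting that on the combinatorial $2$-simplex the unique degree-$1$ class dies instantly and the reduced $-1$-dimensional class is trivial) yields $F(\Phi(t))=3+2t$ for $t\in[0,1/2]$ and $F(\Phi(t))=5-2t$ for $t\in[1/2,1]$. This tent-shaped piecewise-affine function has slope dropping from $+2$ to $-2$ at $t=1/2$ and is therefore not convex on $[0,1]$. The main obstacle is just being careful with the uniform bookkeeping of the degenerate pairs, but once that is verified the conclusion is immediate.
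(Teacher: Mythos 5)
Your treatment of the linearity claim coincides with the paper's: the paper's entire argument for that half is the computation in the paragraph immediately preceding the proposition, namely $f(v) = v\cdot(w_1\cdot a,\ldots,w_p\cdot a)^\tran + b$ on each region of $\mathcal{S}$, with $\mathcal{S}$ supplied by \cref{lem:partition-pullback}, and you reproduce it faithfully. The non-convexity claim is simply asserted in the paper with no argument, so your explicit counterexample is a genuine addition rather than an alternative route to an existing proof.

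That counterexample, however, contains a conflation you should repair. The proposition concerns \cref{problem:tfs-weights}, where the object fed to $\PD(K,-)$ is the convex combination of weights $t\,w_{M^1}+(1-t)\,w_{M^2}$, not the weight of the convex combination of matrices $w_{tM^1+(1-t)M^2}$ as in \cref{problem:tfs-matrices}. These two operations agree on vertices and edges, whose weights are single matrix entries, but not on the top simplex: $w_{tM^1+(1-t)M^2}(\{1,2,3\}) = \max(3-2t,\,1+2t)$, whereas $\bigl(t\,w_{M^1}+(1-t)\,w_{M^2}\bigr)(\{1,2,3\}) = 3$ for all $t$. Consequently, in the weights setting the degree-$1$ class does \emph{not} die instantly; it is born at $\max(3-2t,\,1+2t)$ and dies at $3$. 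Your formula $F(\Phi(t)) = 3+2t$ on $[0,1/2]$ and $5-2t$ on $[1/2,1]$ is the answer for the matrices problem; for the weights problem the correct values are $3+4t$ and $7-4t$ respectively. The conclusion is unaffected, since one still obtains a tent-shaped function with a concave kink at $t=1/2$ and hence non-convexity, so the slip is repairable; but as written your computation establishes non-convexity for a different optimization problem than the one named in the proposition.
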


Let $f$ denote the negative of the map \eqref{eq:tfs-map} restricted to a region $S$ of $\mathcal{S}$.
For $v$ in $S$, and by the assumption that $F$ is PL for $\hat{P}_\trianglelefteq$, we have 
$f(v) =  \left(\textstyle\sum_{i=1}^p v_i w_i\right)\cdot a + b 
     = \textstyle\sum_{i=1}^p v_i \left(w_i\cdot a\right) +b 
     = v\cdot (w_1\cdot a, w_2\cdot a, \dots, w_p\cdot a)^\tran  + b$ 
for some $a\in \R^{2^m}$ and $b\in\R$.
Hence, the composite map \eqref{eq:tfs-map} is continuous and is affine on each region of $\mathcal{S}$.
Furthermore the gradient $f$ as a function on $\R^p$, $\nabla f = (w_1\cdot a, w_2\cdot a, \dots, w_p\cdot a)^\tran$ is constant on $S$.
However, we want the gradient of $f$ as a function on the hyperplane $\Pi$ containing $S$,
which is given by the projection of $\nabla f$ onto $\Pi_0$,
since the gradient of $f|_\Pi$ is given by the maximum directional derivative for directions in $\Pi_0$, for which the component of $\nabla f$ in direction of $\mathbf{1}$ is irrelevant. Thus, $\nabla (f|_{\Pi}) = \proj_{\Pi_0}(\nabla f) = \nabla f - \frac{1}{p}\left(\sum_i \nabla f_i\right)\mathbf{1}$. 
Thus, we update $v\mapsto v - \gamma \proj_{\Pi_0}(\nabla f)$, where $\gamma$ is some step size.

Our first algorithm, \cref{alg:gradient-path-exact}, follows the gradient path exactly. 
The cardinality of the set $\mathcal{R}$, defined in the algorithm, is at most one.
By \cref{lem:partition-pullback}, $\mathcal{S}$ consists of a finite number of polytopes. It follows that $\mathcal{R}$ is empty only if $v_j$ is on the boundary of $\abs{\Delta^{p-1}}$ and $\proj_{\Pi_0}(\nabla f)(v_j)$ points outside of $\abs{\Delta^{p-1}}$.
For $S \in \mathcal{S}$, let $\overline{S}$ denote its closure.

\begin{algorithm}
\caption{Gradient descent}\label{alg:gradient-path-exact}
\begin{algorithmic}
    \Require 
    $N \in \mathbb{N}$, 
    $w \in W^p$
    \Ensure $v_0,v_1,\ldots,v_N \in \abs{\Delta^{p-1}}$
    \State $f \gets - F(\PD(K,- \cdot w)): \abs{\Delta^{p-1}} \to \R$
    \State $v_0 \gets \left( \frac{1}{p},\cdots,\frac{1}{p} \right)$
    \For{$i = 0, \dots, N-1$}
    \State $\mathcal{R} \gets \{S \in \mathcal{S} \ | \ \exists \eps > 0$ such that $\forall t \in (0,\eps), v_i + t \proj_{\Pi_0}(\nabla f)(v_i) \in S\}$
    \If{$\abs{\mathcal{R}} = 1$}
        \State $S \gets$ unique element of $\mathcal{R}$
        \State $\hat{t} \gets \min \{ t \geq 0 \ | \ v_i + t \proj_{\Pi_0}(\nabla f)(v_i) \in \overline{S}\}$
        \State $v_{i+1} \gets v_i + \hat{t} \proj_{\Pi_0}(\nabla f)(v_i)$
    \Else
        \State $v_{i+1} \gets v_i$
    \EndIf
    \EndFor
\end{algorithmic}
\end{algorithm}

While this algorithm is mathematically appealing, following this path to (a point close to) a local maximum is too expensive in practice due to the large number of required persistent homology computations because of the large number of regions in $\mathcal{S}$.

Instead of this deterministic algorithm, we will use a stochastic algorithm. 
Furthermore, we will generalize from using gradients to subgradients, which permits the use of more general functions $F$.
We will start by introducing the general theory.

\subsection{Projected stochastic subgradient descent}
\label{sec:subgradient}

Assume that $A \subseteq \R^q$ is a closed convex set with nonempty interior,
$f: A \to \R$,
and that we want to solve the optimization problem $\minimize_{x \in A} f(x)$.

Assume that $f$ is locally Lipschitz (and hence continuous).
Then by Rademacher's theorem, $f$ is differentiable almost everywhere and thus has a well defined Clarke subdifferential, $\partial f(z)$,
given by the convex hull of the set of limits of $\nabla f(z_i)$ taken over sequences $(z_i)$ converging to $z$ for which $f$ is differentiable at $z_i$.
When $f$ is $C^1$-smooth then $\partial f(z)$ consists of only the gradient of $f$ at $z$
and when $f$ is convex then $\partial f(z)$ coincides with the subdifferential defined in convex analysis.
Say that $z$ is \emph{critical} if $0 \in \partial f(z)$.

We will use the following \emph{projected stochastic subgradient method}~\cite{MR4056927}.
Choose a positive sequence $(a_k)$, called the learning rate, 
a sequence of random variables $(\zeta_k)$ in $\R^q$, called noise, and an initialization $z_0 \in A$.
Then iterate
\begin{equation} \label{eq:stochastic-subgradient}
    z_{k+1} = \proj_A(z_k - a_k(y_k + \zeta_k)), \text{ where } y_k \in \partial f (z_k),
\end{equation}
and $\proj_A: \R^{p+1} \to A$ indicates the unique projection to the nearest point of $A$.
We assume the following mild hypotheses:
1.\ $(a_k)$ is square summable but not summable;
2.\ almost surely $\sup \norm{z_k} < \infty$; and
3.\ each $\zeta_k$, conditioned on the past, has mean $0$ and the second moment grows at a controlled rate~\cite[Assumption C]{MR4056927}.

Say that $f$ is \emph{Whitney stratifiable} if its graph has a \emph{Whitney stratification}.
The class of such function is large, containing all continuous semi-algebraic functions
and all continuous definable functions for any o-minimal structure~\cite{MR4056927}.

\begin{theorem}[\cite{MR4056927}] \label{thm:subgradient}

    Assume that $f$ is locally Lipschitz and Whitney definable.
    Apply projected stochastic subgradient descent~\eqref{eq:stochastic-subgradient} to obtain a sequence $(z_k)$.
    Then almost surely the limit points of $(z_k)$ are critical points of $f$
and the sequence $(f(z_k))$ converges.

\end{theorem}

\begin{remark}
    The projected stochastic subgradient method is a proximal extension of the stochastic subgradient method: $\minimize_{z \in A} f(z) = \minimize_{z \in \R^q} f(z) + g(z)$, where $g(z) = 0$ if $z \in A$ and $g(z)=\infty$ if $z \not\in A$ and the proximal operator of $g$ is the projection operator.
\end{remark}

\subsection{Optimizing functions involving persistence diagrams}
\label{sec:optimizing-functions}

Let $A \subset \R^q$ be a closed convex set with nonempty interior.
Recall that $\PDset$ is the set of persistence diagrams.
Let $\Psi: A \to \PDset$ and $F: \PDset \to \R$.
Consider the optimization problem $\maximize_{z \in A} F(\Psi(z))$.

By \cref{thm:subgradient}, 
if $F \circ \Psi$ is locally Lipschitz and is Whitney definable then
we may find a local minimum to this problem using projected stochastic subgradient descent.
We will show that for the problems that we will consider, $F \circ \Psi$ is piecewise linear and continuous and hence locally Lipschitz and Whitney definable. 
However, the latter are vastly more general,
containing continuous semi-algebraic functions
and continuous definable functions for any o-minimal structure.
In particular, they include Vietoris-Rips, \v{C}ech, and Delaunay filtrations, 
and persistence landscapes and persistence images~\cite{carriere2021optimizing}.

\subsection{Topological feature selection using projected stochastic gradient descent} \label{sec:tfs-sgd}

For topological feature selection, we have $\Psi: \abs{\Delta^{p-1}} \to \PDset$ given by $\PD(K,- \cdot w)$.
If we have a function $F: \gradedPDset \to \R$ such that $f = -F(\PD(K,- \cdot w)): \abs{\Delta^{p-1}} \to \R$ is locally Lipschitz and Whitney definable
then we may use projected stochastic subgradient descent to find a local minimum for $f$.
We give pseudocode for this method in \cref{alg:stochastic-path},
where $N_{p-1}(0,\sigma^2 I)$ denotes the $(p-1)$-dimensional multivariate normal distribution centered at the origin with covariance matrix given by the scalar multiple of the identity matrix by $\sigma^2$.

\begin{algorithm}
\caption{Stochastic subgradient descent}\label{alg:stochastic-path}
\begin{algorithmic}
    \Require 
    $N \in \mathbb{N}$, $\eta_0, d, \sigma > 0$, $w \in W^p$
    \Ensure $v_0,v_1,\ldots,v_N \in \abs{\Delta^{p-1}}$
    \State $f \gets -F(\PD(K,- \cdot w)): \abs{\Delta^{p-1}} \to \R$
    \State $v_0 \gets \left( \frac{1}{p},\cdots,\frac{1}{p} \right)$
    \For{$i = 0, \dots, N-1$}
    \State $\eta_{i+1} \gets \frac{\eta_0}{1 + d i}$
    \State $\zeta_i \gets N_{p-1}(0,\sigma^2 I)$
    \State $y_i \in \partial f(v_i)$
    \State $v_{i+1} \gets \proj_{\abs{\Delta^{p-1}}}(v_i - \eta_{i+1} (y_i + \zeta_i))$
    \EndFor
\end{algorithmic}
\end{algorithm}

In the cases that we will consider, there is finite partition $\mathcal{S}$ of $\abs{\Delta^{p-1}}$ into polytopes such that $f$ is affine on each polytope (see Section~\ref{sec:tfs-pl}). 
We will use the gradient of this affine function, as a function of $\R^p$,
add Gaussian noise in $\R^p$, and then project onto $\Pi_0$
(see Section~\ref{sec:tfs-pl}). 
We will use the efficient algorithm of Duchi et al~\cite{duchi2008efficient} to find the closest point in $\abs{\Delta^{p-1}}$.

\subsection{Gradient path} \label{sec:gradient-path}

For either of our algorithms, 
gradient descent results in a 
sequence $v_0,v_1,\ldots,v_N \in \abs{\Delta^{p-1}}$ starting at the barycenter
and ending at $v_N$, which we denote $s$ and call the \emph{score}.
Then $s = (s_1,\ldots,s_p)$ and for $1 \leq j \leq p$ we call $s_j$ the \emph{score} of the $j$th weight $w_j$.
Linearly interpolating between the points $v_0,v_1,\ldots,v_N$, we obtain the corresponding PL curve $v:[0,N] \to \abs{\Delta^{p-1}}$.

Furthermore, we have
$v(-) \cdot w: [0,N] \to W$, where $v(t) \cdot w = \sum_{i=1}^p v_i(t) w_i$. 
For $1 \leq q \leq \infty$, let $W$ have the metric induced by the $q$-norm on $\R^{2^m}$.
Let $C = \max_{1 \leq i,j \leq p} \norm{w_i-w_j}_q$.
By Bauer's maximum principle, $v(-) \cdot w$ is $C$-Lipschitz.
If desired, we may rescale the PL curve $v$ so that for some $T > 0$, such as $T = C N$, $v:[0,T] \to \abs{\Delta^{p-1}}$ and $v(-) \cdot w$ is $1$-Lipschitz.

\subsection{Stability}

We have the following stability results.

\begin{theorem} \label{thm:stability-FV-A}
    Let $1 \leq q \leq \infty$.
    Let $W$ have the metric induced by the $q$-norm on $\R^{2^m}$ and 
    let $\gradedPDset$ have the $q$-Wasserstein distance.
    Let $w = (w_1,\ldots,w_p) \in W^p$.
    Let $v:[0,T] \to \abs{\Delta^{p-1}}$ be a PL curve such that the PL curve $v(-) \cdot w: [0,T] \to W$ is $C$-Lipschitz.
    Let $\hat{P}_\trianglelefteq$ be a partition of $W$ consisting of $2^m$-dimensional polyhedra which is a refinement of $P_\trianglelefteq$.
    Let $F:\gradedPDset \to \R$ be PL for $\hat{P}_\trianglelefteq$.
    Assume that $F$ is $L$-Lipschitz.
    Then $F(\PD(K,v(-)\cdot w)): [0,T] \to \R$ is $C L$-Lipschitz.
\end{theorem}

\begin{proof} 
    $F(\PD(K,v(-)\cdot w))$ is the composite of the functions
    $v(-) \cdot w: [0,T] \to W$, 
    $\PD(K,-): W \to \gradedPDset$ and
    $F: \gradedPDset \to \R$.
    By assumption, the first and third of these functions are $C$-Lipschitz and $L$-Lipschitz, respectively.
    The second function is $1$-Lipschitz by \cref{thm:stability-A}.
    The result follows.
\end{proof}

\begin{corollary} \label{cor:stability-FV-A}
    Let $1 \leq q \leq \infty$.
    Let $W$ have the metric induced by the $q$-norm on $\R^{2^m}$ and 
    let $\gradedPDset$ have the $q$-Wasserstein distance.
    Let $w = (w_1,\ldots,w_p) \in W^p$.
    Let $C = \max_{1 \leq i,j \leq p} \norm{w_i-w_j}_q$.
    Let $\hat{P}_\trianglelefteq$ be a partition of $W$ consisting of $2^m$-dimensional polyhedra which is a refinement of $P_\trianglelefteq$.
    Let $F:\gradedPDset \to \R$ be PL for $\hat{P}_\trianglelefteq$.
    Assume that $F$ is $L$-Lipschitz.
    Let $v:[0,N] \to \abs{\Delta^{p-1}}$ be the PL curve produced by either \cref{alg:gradient-path-exact} or \cref{alg:stochastic-path}.
    Then $F(\PD(K,v(-)\cdot w)): [0,N] \to \R$ is $C L$-Lipschitz.
\end{corollary}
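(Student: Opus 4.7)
The plan is to recognize this corollary as an immediate consequence of \cref{thm:stability-FV-A} once we verify that the PL curve $v(-)\cdot w:[0,N]\to W$ produced by either gradient ascent algorithm is $K$-Lipschitz, with $K = \max_{1\leq i,j\leq p}\norm{w_i - w_j}_q$. The Lipschitz bound is essentially asserted in the paragraph preceding \cref{thm:stability-FV-A} via Bauer's maximum principle; my task is just to spell it out.

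First I would describe $v$ explicitly: both \cref{alg:gradient-path-steps} and \cref{alg:gradient-path-exact} produce a finite sequence $v_0,v_1,\ldots,v_N \in \abs{\Delta^{p-1}}$, and $v$ is their piecewise-affine interpolation with breakpoints at the integers. The map $u \mapsto u\cdot w$ from $\abs{\Delta^{p-1}}$ to $W$ is affine, so $v(-)\cdot w$ is itself PL with breakpoints in $\{0,1,\ldots,N\}$. It therefore suffices to bound the slope on each segment: on $[j,j+1]$, for all $s,t$ in that segment, $(v(t) - v(s))\cdot w = (t-s)(v_{j+1} - v_j)\cdot w$, so I need $\norm{v_{j+1}\cdot w - v_j\cdot w}_q \leq K$.

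Both endpoints $v_{j+1}\cdot w$ and $v_j\cdot w$ are convex combinations of $w_1,\ldots,w_p$ and hence lie in the polytope $C = \conv(w_1,\ldots,w_p) \subset W$. For any fixed $z \in C$, the function $u \mapsto \norm{u-z}_q$ is convex, so by Bauer's maximum principle its supremum on $C$ is attained at a vertex of $C$. Iterating, the diameter of $C$ in the $q$ norm equals $\max_{1\leq i,j\leq p}\norm{w_i-w_j}_q = K$. Consequently $\norm{v_{j+1}\cdot w - v_j\cdot w}_q \leq K$ on each segment, and stitching together consecutive segments via the triangle inequality yields that $v(-)\cdot w$ is $K$-Lipschitz on $[0,N]$. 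Applying \cref{thm:stability-FV-A} with $T=N$ and this constant $K$ then gives the conclusion. I do not anticipate any serious obstacle; the only mildly nontrivial ingredient is the diameter bound, which is a standard application of Bauer's principle to the convex function $\norm{\,\cdot\,-z}_q$.
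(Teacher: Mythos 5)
Your proposal is correct and follows exactly the route the paper intends: the paragraph preceding \cref{thm:stability-FV-A} asserts via Bauer's maximum principle that $v(-)\cdot w$ is $K$-Lipschitz with $K=\max_{i,j}\norm{w_i-w_j}_q$, and the corollary is then an immediate application of \cref{thm:stability-FV-A} with $T=N$. Your explicit verification of the diameter bound on $\conv(w_1,\ldots,w_p)$ and the segment-by-segment gluing simply spells out what the paper leaves implicit.
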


\begin{theorem} \label{thm:stability-FV-3}
    Let $-1 \leq k \leq m-1$.
    Let $W$ have the metric obtained from the $\infty$-norm on $\R^{2^m}$ and let $\PDset$ have the $1$-Wasserstein distance. 
    Let $w = (w_1,\ldots,w_p) \in W^p$.
    Let $v:[0,T] \to \abs{\Delta^{p-1}}$ be a PL curve such that the PL curve $v(-) \cdot w: [0,T] \to W$ is $C$-Lipschitz.
    Let $\hat{P}_\trianglelefteq$ be a partition of $W$ consisting of $2^m$-dimensional polyhedra which is a refinement of $P_\trianglelefteq$.
    Let $F:\PDset \to \R$ be PL for $\hat{P}_\trianglelefteq$.
    Assume that $F$ is $L$-Lipschitz.
    Then $F(\PD_k(K,v(-)\cdot w)): [a,b] \to \R$ is $2 f_k^+ C L$-Lipschitz.
\end{theorem}

\begin{proof}
    The proof is the same as the proof of \cref{thm:stability-FV-A}, expect in this case $\PD_k(K,-):W \to \PDset$ is $2 f_k^+$-Lipschitz by \cref{thm:stability-3}. 
\end{proof}

\begin{corollary} \label{cor:stability-FV-3}
    Let $W$ have the distance induced by the $\infty$-norm on $\R^{2^m}$ and
    let $
    \PDset$ have the $1$-Wasserstein distance.
    Let $w = (w_1,\ldots,w_p) \in W^p$.
    Let $C = \max_{1 \leq i,j \leq p} \norm{w_i-w_j}_{\infty}$.
    Let $\hat{P}_\trianglelefteq$ be a partition of $W$ consisting of $2^m$-dimensional polyhedra which is a refinement of $P_\trianglelefteq$.
    Let $F:\PDset \to \R$ be PL for $\hat{P}_\trianglelefteq$.
    Assume that $F$ is $L$-Lipschitz.
    Let $v:[0,N] \to \abs{\Delta^{p-1}}$ be the PL curve produced by either \cref{alg:gradient-path-exact} or \cref{alg:stochastic-path}.
    Then $F(\PD(K,v(-)\cdot w)): [0,N] \to \R$ is $2 f_k^+ C L$-Lipschitz.
\end{corollary}

\begin{example}
    Let $F: \gradedPDset \to \R$ be given by the total persistence of the persistence diagram.
    Let $\gradedPDset$ have the $1$-Wasserstein distance.
    Then for $\alpha \in \gradedPDset$, $F(\alpha) = W_1(\alpha,0)$.
    By the reverse triangle inequality, for $\alpha, \beta \in \gradedPDset$, $\abs{F(\alpha) - F(\beta)} \leq W_1(\alpha,\beta)$.
    That is, $F$ is $1$-Lipschitz.
    Let $w = (w_1,\ldots,w_p) \in W^p$.
    Let $C = \max_{1 \leq i,j \leq p} \norm{w_i-w_j}_{1}$.
    Let $v:[0,N] \to \abs{\Delta^{p-1}}$ be the PL curve produced by either \cref{alg:gradient-path-exact} or \cref{alg:stochastic-path}.
    Let $V:[0,N] \to \gradedPDset$ be given by $\PD(K,v(-) \cdot w)$.
    By \cref{cor:stability-FV-A}, $F \circ V : [0,N] \to \R$ is $C$-Lipschitz.
\end{example}

More generally, we have the following.

\begin{example} \label{ex:gradient-path-lipschitz-general}
    Let $\ell \in \mathbb{N}$.
    Let $F: \gradedPDset \to \R$ be given by the sum of the persistences of the $\ell$ most-persistent points in the persistence diagram.
    Let $\gradedPDset$ have the $1$-Wasserstein distance.
    By the definition of $1$-Wasserstein distance~\cite{csehm:lipschitz},
    for $\alpha, \beta \in \gradedPDset$, $\abs{F(\alpha) - F(\beta)} \leq W_1(\alpha,\beta)$.
    That is, $F$ is $1$-Lipschitz.
    Let $w = (w_1,\ldots,w_p) \in W^p$.
    Let $C = \max_{1 \leq i,j \leq p} \norm{w_i-w_j}_{1}$.
    Let $v:[0,N] \to \abs{\Delta^{p-1}}$ be the PL curve produced by either \cref{alg:gradient-path-exact} or \cref{alg:stochastic-path}.
    Let $V:[0,N] \to \gradedPDset$ be given by $\PD(K,v(-) \cdot w)$.
    By \cref{cor:stability-FV-A}, $F \circ V : [0,N] \to \R$ is $C$-Lipschitz.
\end{example}

\subsection{Mean gradient path} \label{sec:mean-gradient-path}

For our topological feature selection problems, we have two gradient paths (Section~\ref{sec:gradient-path}), one from gradient descent and the other from stochastic gradient descent.
For each of these, we will produce an average path the satisfies a law of large numbers and a central limit theorem.

For the stochastic gradient descent, 
choose $N$, fix the learning rate ($\eta_0, d > 0$) and size of the Gaussian noise ($\sigma > 0$).
Then apply \cref{alg:stochastic-path} as in Section~\ref{sec:tfs-sgd}.
We obtain a sequence $v = (v_0,v_1,\ldots,v_N) \subset \abs{\Delta^{p-1}}$ (see Section~\ref{sec:gradient-path}).
Consider $\zeta = (\zeta_1,\ldots,\zeta_N)$ as a random variable,
$\zeta: (\Omega,P) \to \R^{N(p-1)}$, where $(\Omega,P)$ is an abstract probability space, and $v$ as a function of $\zeta$.
Then we have the \emph{random gradient path},
\begin{equation}
    v(\zeta): (\Omega,P) \xrightarrow{\zeta} \R^{N(p-1)} \xrightarrow{v} \R^{(N+1)p}.
\end{equation}
Note that $(v(\zeta))_i = v_i(\zeta)$.

\begin{remark}
    Instead of using fixed Gaussian noise, one may use noise satisfying the much weaker assumption of Section~\ref{sec:subgradient}.
\end{remark}

For gradient descent, choose $N$.
Then apply \cref{alg:gradient-path-exact} to obtain a function $v: W^p \to \abs{\Delta^{p-1}}^{N+1} \subset \R^{p(N+1)}$, where $v = (v_0,\ldots,v_N)$, $v_i: W^p \to \abs{\Delta^{p-1}} \subset \R^p$.

\begin{lemma} \label{lem:continuous-piecewise}
    Let $0 \leq i \leq N$. 
    The map $v_i: W^p \to \abs{\Delta^{p-1}}$ is piecewise continuous.
\end{lemma}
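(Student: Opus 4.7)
I would prove this by induction on $i$, producing at each stage a finite partition $\mathcal{Q}_i$ of $W^p$ such that $v_i|_Q$ is continuous for every $Q \in \mathcal{Q}_i$. The base case $i = 0$ is immediate: $v_0$ is the constant map to the barycenter, so the trivial partition $\{W^p\}$ works.

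For the inductive step, focusing first on \cref{alg:gradient-path-steps}, suppose the claim holds for $v_j$ with partition $\mathcal{Q}_j$. The key ingredient is the explicit gradient formula from Section~\ref{sec:tfs-pl}: since $F$ is PL for $\hat{\mathcal{P}}_\trianglelefteq$, on each region $A \in \hat{\mathcal{P}}_\trianglelefteq$ there exist $a_A \in \R^{2^m}$ and $b_A \in \R$ with $F(\PD(K,u)) = u \cdot a_A + b_A$ for $u \in A$. Setting $f_w(v) := F(\PD(K, v \cdot w))$, whenever $v \cdot w \in A$ we have $f_w(v) = \sum_i v_i (w_i \cdot a_A) + b_A$, so $\nabla f_w(v) = (w_1 \cdot a_A, \ldots, w_p \cdot a_A)^\tran$ is linear, hence continuous, in $w$.

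I would then refine $\mathcal{Q}_j$ by setting $Q_A := \{w \in Q : v_j(w) \cdot w \in A\}$ for each $Q \in \mathcal{Q}_j$ and $A \in \hat{\mathcal{P}}_\trianglelefteq$. Since $\hat{\mathcal{P}}_\trianglelefteq$ is finite, this yields a finite partition $\mathcal{Q}_{j+1}$. On $Q_A$ the region $A$ is constant, so $w \mapsto \nabla f_w(v_j(w))$ is continuous, obtained by composing the continuous $v_j|_{Q}$ with the linear-in-$w$ formula above. The remaining operations in the update, namely applying the linear $\nabla_{\proj}$, adding $v_j(w)$, scaling by $\eta_{j+1}$, and applying the continuous simplex projection $P$, all preserve continuity, so $v_{j+1}|_{Q_A}$ is continuous. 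For \cref{alg:gradient-path-exact} the scheme is analogous, with an additional refinement required to make constant both whether $|\mathcal{R}| = 0$ or $1$ and, in the latter case, which polytope of $\mathcal{S}$ is hit by the ray $v_j(w) + t \nabla_{\proj} f_w(v_j(w))$; on each such piece $\hat{t}$ is the continuous solution of a linear equation.

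The main obstacle is bookkeeping rather than any genuinely hard step. The pieces $Q_A$ need not be polyhedral: since $w \mapsto v_j(w) \cdot w$ is typically bilinear where $v_j|_Q$ is linear, the $Q_A$ are in general only semi-algebraic. However, the lemma demands only continuity on each piece and not any richer geometric structure, so this is harmless. A secondary complication is resolving ties where $v_j(w) \cdot w$ lies on the boundary between multiple regions of $\hat{\mathcal{P}}_\trianglelefteq$, but a fixed tie-breaking rule consistent with the linear extension $\trianglelefteq$ handles this without disturbing piecewise continuity.
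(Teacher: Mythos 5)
The paper states \cref{lem:continuous-piecewise} without any proof, so there is nothing to compare your argument against directly; judged on its own, your induction on the step index is the natural argument and is essentially correct. The base case is trivial, and the inductive step correctly isolates the only source of discontinuity in \cref{alg:gradient-path-steps}: the dependence of the gradient on which region $A \in \hat{\mathcal{P}}_\trianglelefteq$ contains $v_j(w)\cdot w$. Your observation that, with $A$ fixed, $\nabla f_w = (w_1\cdot a_A,\ldots,w_p\cdot a_A)^\tran$ is linear in $w$ and independent of $v$, so that refining by the region $A$ restores continuity of the update, is exactly right, and your remark that the resulting pieces are only semi-algebraic (because $w\mapsto v_j(w)\cdot w$ is not affine) but that this is harmless for mere piecewise continuity is a worthwhile point the paper leaves implicit.

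Two places deserve slightly more care than you give them. First, for \cref{alg:gradient-path-exact}, the partition $\mathcal{S}$ itself depends on $w$ (it is the pullback of $\hat{\mathcal{P}}_\trianglelefteq$ along $-\cdot w$), so the refinement should be indexed by regions of $\hat{\mathcal{P}}_\trianglelefteq$ rather than by polytopes of $\mathcal{S}$, with membership conditions of the form $(v_j(w)+t\,\nabla_{\proj}f(v_j(w)))\cdot w \in A$. Second, $\hat{t}$ is not the solution of a single linear equation but a minimum over facets of $\overline{S}$ of ratios whose denominators can vanish; you need one further refinement by which facet is exited first before $\hat{t}$ becomes a continuous (rational) function of $w$ on each piece. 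Both are bookkeeping issues of the kind you already anticipate, and neither affects the validity of the conclusion.
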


Let $\zeta: (\Omega,P) \to W^p$ be a random variable with values in $W^p \subset \R^{p2^m}$, where $(\Omega,P)$ is an abstract probability space.
By \cref{lem:continuous-piecewise}, for $0 \leq i \leq N$, 
$v_i(\zeta) = v_i \circ \zeta$ is a random variable with values in $\abs{\Delta^{p-1}} \subset \R^p$
and
$v(\zeta) = v \circ \zeta$ is a random variable with values in $\abs{\Delta^{p-1}}^{N+1} \subset \R^{p(N+1)}$.

For both stochastic gradient descent and gradient descent,
since $\abs{\Delta^{p-1}}$ is bounded, all moments of $v_i(\zeta)$ and $v(\zeta)$ are finite.
For a random variable $Y$, let $E(Y)$ denote its expectation.
We refer to the sequence
$E(v(\zeta)) = (E(v_0(\zeta)),\ldots,E(v_N(\zeta)))$ as the \emph{mean gradient path}.
We also use this term to refer to the corresponding PL path $[0,N] \to \abs{\Delta^{p-1}}$.
As a special case, $E(v_N(\zeta))$ is the \emph{mean score} of $w = (w_1,\ldots,w_p)$ and 
for $1 \leq j \leq p$, we call $s_j$ the \emph{mean score} of the $j$th weight $w_j$.

Since we have finite first moments, we have a strong law of numbers, 
which says that empirical mean gradient paths converge to the mean gradient path with probability one.
As a special case, the empirical mean score converges to the mean score with probability one.

\begin{theorem}[Strong Law of Large Numbers] \label{thm:slln}
    Suppose the we have a sequence of independent $N(p-1)$-dimensional normally distributed random variables, $\zeta^{(1)},\zeta^{(2)},\ldots \sim N(0,\sigma^2 I)$.
    Then 
    for $1 \in 1,\ldots, N$,
    \begin{gather*}
        \frac{1}{n}\left( v_i(\zeta^{(1)}) + \cdots + v_i(\zeta^{(n)}) \right) \to E( v_i(\zeta^{(1)}) ) \text{ almost surely, and}\\
        \frac{1}{n}\left( v(\zeta^{(1)}) + \cdots + v(\zeta^{(n)}) \right) \to E( v(\zeta^{(1)}) ) \text{ almost surely.}
    \end{gather*}
    That is, 
    $P(\frac{1}{n}\left( v_i(\zeta^{(1)}) + \cdots + v_i(\zeta^{(n)}) \right) \to E( v_i(\zeta^{(1)}) )) = 1$
    and
    $P(\frac{1}{n}\left( v(\zeta^{(1)}) + \cdots + v(\zeta^{(n)}) \right) \to E( v(\zeta^{(1)}) )) = 1$.
\end{theorem}

Since we have finite second moments, we also have a central limit theorem.

\begin{theorem}[Central Limit Theorem] \label{thm:clt}
    Suppose the we have a sequence of independent $N(p-1)$-dimensional normally distributed random variables, $\zeta^{(1)},\zeta^{(2)},\ldots \sim N(0,\sigma^2 I)$.
    Then 
    for $1 \in 1,\ldots, N$,
    \begin{equation*}
        \sqrt{n} \left( \frac{1}{n} \left( v_i(\zeta^{(1)}) + \cdots + v_i(\zeta^{(n)}) \right) - E(v_i(\zeta^{(1)})) \right) \to N(0,\Sigma_i) \text{ in distribution,}
    \end{equation*}
    where $\Sigma_i$ denotes the covariance matrix of $v_i(\zeta^{(1)})$
    and 
    \begin{equation*}
        \sqrt{n} \left( \frac{1}{n} \left( v(\zeta^{(1)}) + \cdots + v(\zeta^{(n)}) \right) - E(v(\zeta^{(1)})) \right) \to N(0,\Sigma) \text{ in distribution,}
    \end{equation*}
    where $\Sigma$ denotes the covariance matrix of $v(\zeta^{(1)})$.
\end{theorem}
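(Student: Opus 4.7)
The plan is to reduce the statement directly to the classical multivariate Lindeberg–Lévy central limit theorem on $\R^p$ and $\R^{p(N+1)}$. The main point to verify is that the induced random variables $v_i(X_j)$ and $v(X_j)$ satisfy the hypotheses of that theorem: that they are i.i.d.\ random vectors with finite second moment. Once this is established, the conclusion is immediate.

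First, I would observe that by \cref{lem:continuous-piecewise}, the map $v_i\colon W^p \to \abs{\Delta^{p-1}}$ is piecewise continuous on a partition of $W^p$ into finitely many Borel pieces (the pullbacks along $-\cdot w$ of the regions of $\hat{\mathcal{P}}_\trianglelefteq$). In particular, $v_i$ is Borel measurable, and likewise so is the concatenation $v = (v_0,\ldots,v_N)\colon W^p \to \abs{\Delta^{p-1}}^{N+1}$. Because the $X_j$ are i.i.d., the composites $v_i(X_j) = v_i \circ X_j$ for $j = 1, 2, \ldots$ are i.i.d.\ random vectors in $\R^p$, and similarly $v(X_j)$ for $j = 1, 2, \ldots$ are i.i.d.\ random vectors in $\R^{p(N+1)}$.

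Next, since $\abs{\Delta^{p-1}}$ is compact, $v_i(X_1)$ is a bounded random vector, so its mean $E(v_i(X_1))$ exists and its covariance matrix $\Sigma_i$ is well-defined and finite; the same holds for $v(X_1)$ with covariance $\Sigma$. The classical multivariate CLT applied to the i.i.d.\ sequence $\{v_i(X_j)\}_{j\geq 1}$ then yields
\begin{equation*}
\sqrt{n}\left(\frac{1}{n}\sum_{j=1}^n v_i(X_j) - E(v_i(X_1))\right) \to N_p(0, \Sigma_i)
\end{equation*}
in distribution, and analogously for $v$ with $N_{p(N+1)}(0,\Sigma)$. This gives both assertions of the theorem.

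The only subtle point is confirming measurability of $v_i$ and $v$, which would be the main obstacle if \cref{lem:continuous-piecewise} were not already available; given that lemma, the step is routine because a function that is continuous on each piece of a finite Borel partition is Borel measurable. Everything else is a direct application of standard limit theorems in $\R^d$, so I would present the proof as a short paragraph invoking \cref{lem:continuous-piecewise}, the boundedness of $\abs{\Delta^{p-1}}$, and the multivariate Lindeberg–Lévy theorem.
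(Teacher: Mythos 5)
Your proposal is correct and matches the paper's (implicit) argument exactly: the paper establishes measurability of $v_i$ and $v$ via \cref{lem:continuous-piecewise}, notes that boundedness of $\abs{\Delta^{p-1}}$ gives finite second moments, and then invokes the classical multivariate CLT without further comment. Your write-up simply makes these same steps explicit.
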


\section{Applications} \label{sec:examples}

In this section, we consider synthetic and biological data sets and apply our method to assign scores to individual time series based on their contribution to global dynamics. 
For each example, if an update step leaves the simplex, we apply an orthogonal projection back to the simplex. 

In each example, we choose the initial learning rate $\eta_0$ to be such that the first step of the optimization has length $10^{-3}$. We choose the decay rate $d$ such that the final step length is approximately $\tfrac{1}{2} 10^{-3}$. The gradient-noise scale $\sigma$ is set to one tenth of the norm of the projected subgradient barycenter so that the injected noise is an order of magnitude smaller than the initial gradient signal.

\subsection{Synthetic data}

We first consider two synthetic data sets.

\subsubsection{Three sine curves with varying amounts of noise}

This is an instance of \cref{problem:tfs-time-series} with $p = 3$ and $n = 300$.
Consider the time series $f\colon \R\to\R^3$ with components 
\[f_i(t)=\sin\left(\tfrac{2\pi}{50}(t-K_i)\right)\hspace{1cm} i=1,2,3,\]
where each $K_i\in\R$ is a randomly chosen phase shift.
We restrict to the finite domain $D=\{ 1, \dots, 300\}$ and add iid Gaussian noise with standard deviation $\tau=1$, $\tau = 2$, and $\tau = 3$ to the signals, respectively. These time series and the underlying sines are displayed in Figure~\ref{fig:noisy_sines}~(left). 
With window length $L=250$, we obtain the sliding window point cloud consisting of $51$ points in $\R^{750}$. 
This window length (a large multiple of the period) is chosen to maximize maximum persistence. 
The augmented combinatorial
simplex $K$ therefore has dimension $50$, and the optimization variable
$v \in \abs{\Delta^{p-1}} = \abs{\Delta^2}$ represents a convex combination of the three pairwise distance matrices.
We take $F$ to be the maximum persistence of the degree-$1$ persistence diagram.

\begin{figure}[ht]
\centering
\includegraphics[width=1\textwidth]{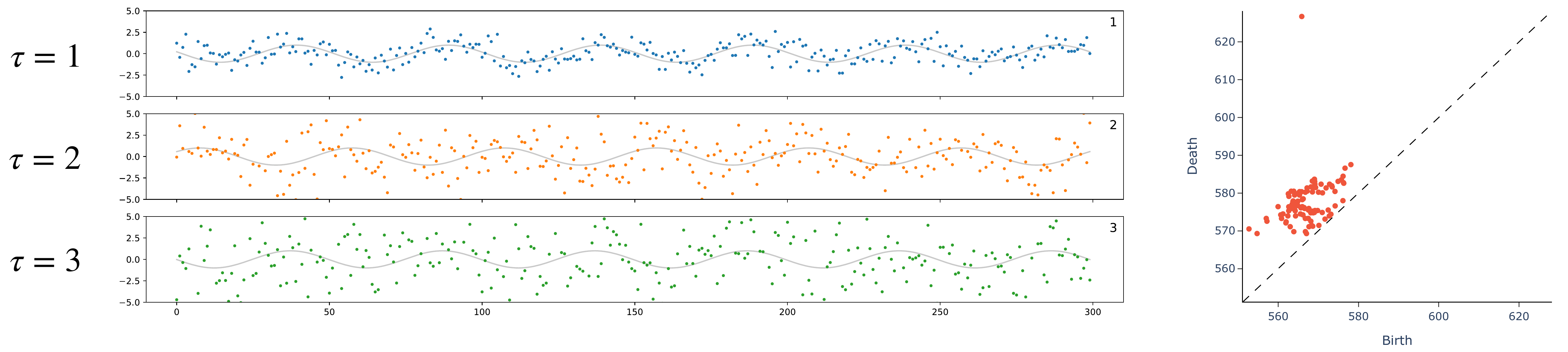}
\caption{Samples from three sine waves together with varying amounts of iid Gaussian noise (left) and the persistence diagram in degree one of the high-dimensional embedded data (right).}\label{fig:noisy_sines}
\end{figure}

Next, we compute the degree-$1$ persistent homology (Figure~\ref{fig:noisy_sines}, right). 
Each embedded sine wave traces a loop in $\R^{750}$~\cite{perea2015sliding}.
This loop manifests as a single prominent point in the degree-$1$ persistence diagram; its lifetime  is larger for cleaner signals.
Since all three components share the same period, the convex-combination distance matrix at any $v \in \abs{\Delta^2}$ also yields a single loop rather than multiple independent ones, and the degree-$1$ diagram (Figure~\ref{fig:noisy_sines}, right) correspondingly shows one outstanding feature.

Initializing at the barycenter $\lambda=(\tfrac13,\tfrac13,\tfrac13)$, we optimize for maximum persistence using the projected stochastic subgradient descent of Algorithm~\ref{alg:stochastic-path} with initial learning rate $\eta_0\approx 6\times 10^{-5}$, decay factor $d=10^{-3}$, and gradient-noise scale $\sigma\approx 1.6$. The noise scale $\sigma$ is set to one tenth of the norm of the projected subgradient at the barycenter. With these choices the step length is $10^{-3}$ at the first iterate and decays to roughly half of this value by the final iterate. A path through the geometric $2$-simplex leading to a local maximum is visualized in Figure~\ref{fig:noisy_sines_path} (left). The algorithm reaches the vertex corresponding to the least-noisy signal after $530$ steps. The numerical value of the objective function throughout the descent is shown in Figure~\ref{fig:noisy_sines_path} (right).

\begin{figure}[ht]
\centering
\includegraphics[width=0.75\textwidth]{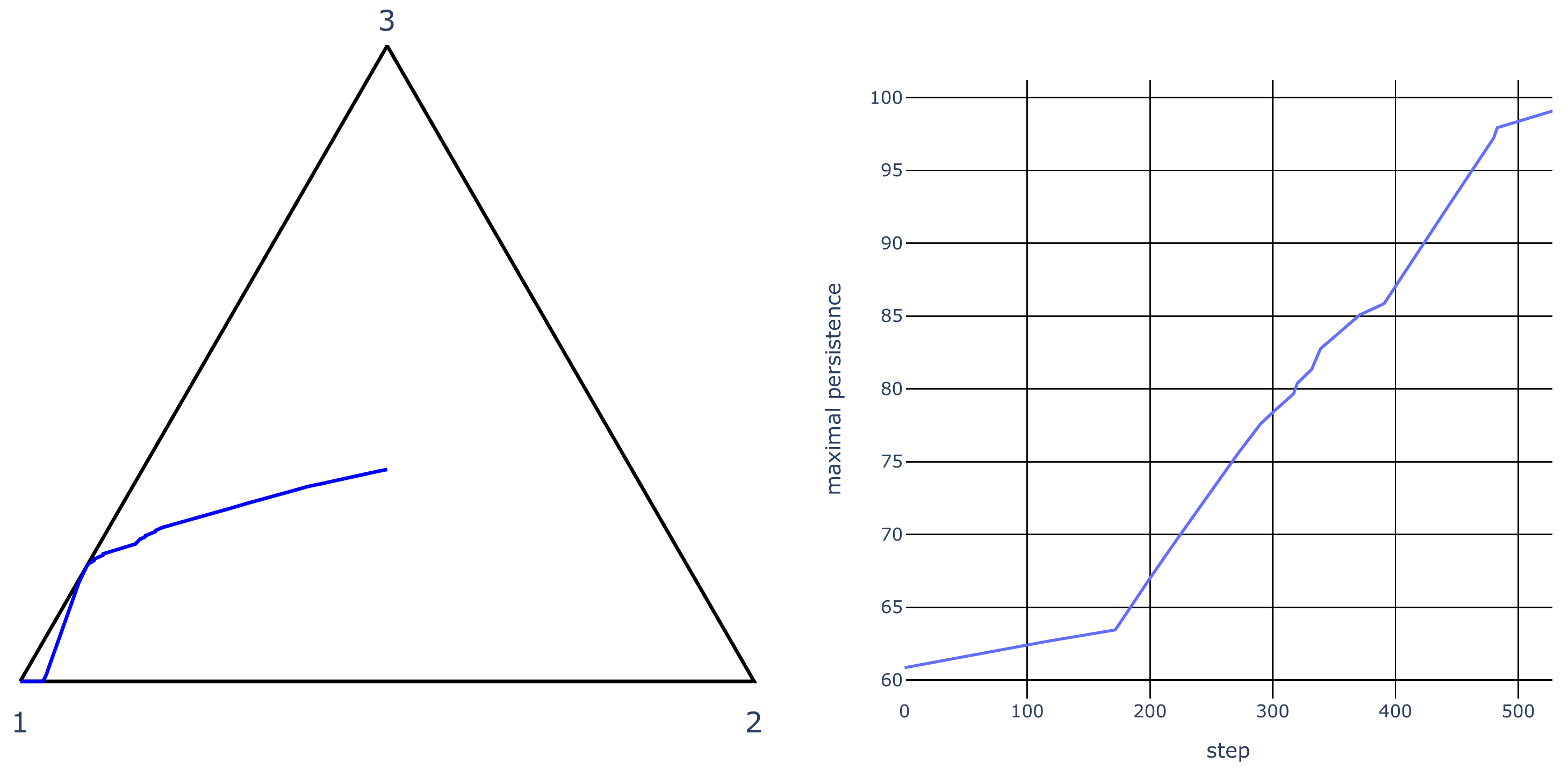}
\caption{A visualization of the gradient path (left) and the graph of the functional that is being optimized (right).
The algorithm begins at the barycenter $(\tfrac{1}{3},\tfrac{1}{3},\tfrac{1}{3})$ of the $2$-simplex and converges to the vertex corresponding to the least-noisy signal. 
}\label{fig:noisy_sines_path}
\end{figure}

\subsubsection{Detecting periodicity}\label{ssec:periodicity}
This is an instance of \cref{problem:tfs-time-series} with $p = 10$ and $n = 300$.
Consider the time series $f\colon \R\to\R^{10}$ with components 
\[f_i(t)=\sin\left(\tfrac{2\pi}{50}(t-K_i)\right)\hspace{1cm} i=1,2,\dots, 10,\]
where each $K_i\in\R$ is a randomly chosen phase shift.
We restrict to the finite domain $D=\{ 1, \dots, 300\}$ (Figure~\ref{fig:synth_data}, (a)). 
We randomly permute the values of seven of the signals (Figure~\ref{fig:synth_data}, (b)). 
We then add iid Gaussian noise with standard deviation $\tau=1.5$ to each signal (Figure~\ref{fig:synth_data}, (c)). 

With window length $L=250$, we obtain the sliding window point cloud consisting of $51$ points in $\R^{2500}$. 
As in the previous example, the simplex $K$ has dimension $50$. The optimization variable $v \in \abs{\Delta^9}$ represents a convex combination of ten pairwise distance matrices.
We take $F$ to be the maximum persistence of the degree-$1$ persistence diagram.
Without \emph{a priori} knowledge of the underlying period of the signals, an appropriate window length may be identified by searching for the value that approximately maximizes the maximum persistence in the resulting persistence diagrams. In this case, the value $L=250$ maximizes the maximum persistence.

\begin{figure}[ht]
\centering
\includegraphics[width=\textwidth]{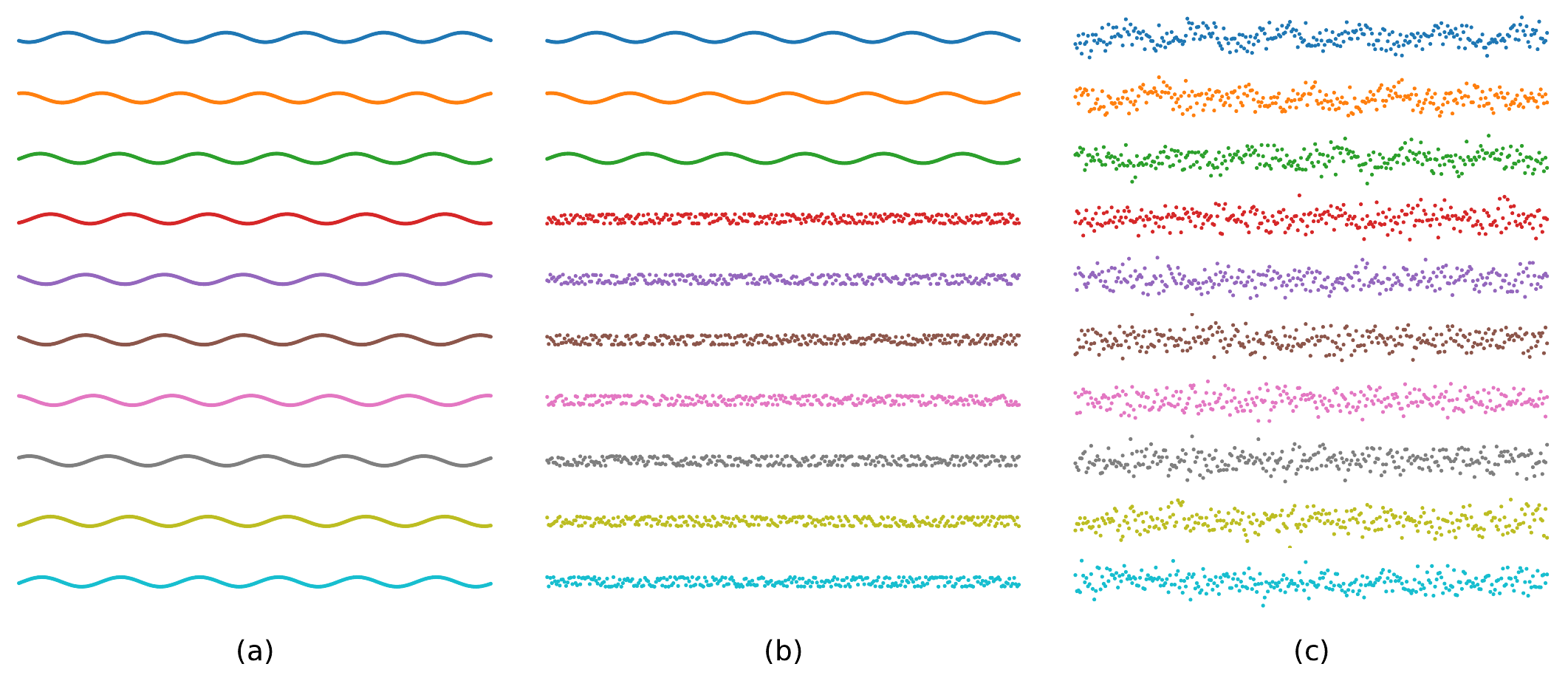}
\caption{The data set used in Example~\ref{ssec:periodicity}. 
Beginning with samples from ten sine waves (a), we randomly permute the values of seven of the signals (b) and add iid Gaussian noise with standard deviation $\tau = 1.5$ (c).} 
\label{fig:synth_data}
\end{figure}

Next, we compute the degree-$1$ persistent homology (Figure~\ref{fig:synth_pds}, left). 
Initializing at the barycenter, we optimize for maximum persistence using Algorithm~\ref{alg:stochastic-path} with initial learning rate $\eta_0\approx 8\times 10^{-6}$, decay factor $d=5\times 10^{-4}$, and gradient-noise scale $\sigma\approx 12.7$. As before, $\sigma$ is taken to be one tenth of the norm of the projected subgradient at the barycenter. With these choices the step length is $10^{-3}$ at the first iterate and decays to about half that value by the final iterate.
After performing 100 trials, we average the resulting paths through the geometric 9-simplex (Figure~\ref{fig:synth_pds} (left)). 
By \cref{thm:slln}, this sample mean gradient path converges to the mean gradient path.
Furthermore, by \cref{thm:clt}, the suitably normalized difference between the sample mean gradient path and the mean gradient path is normally distributed. 
The mean sample covariance of scores across these trials is approximately $2.4\times 10^{-6}$, and the covariance matrix is displayed in Figure~\ref{fig:synth_covariance}.

\begin{figure}[ht]
\centering
\includegraphics[width=.5\textwidth]{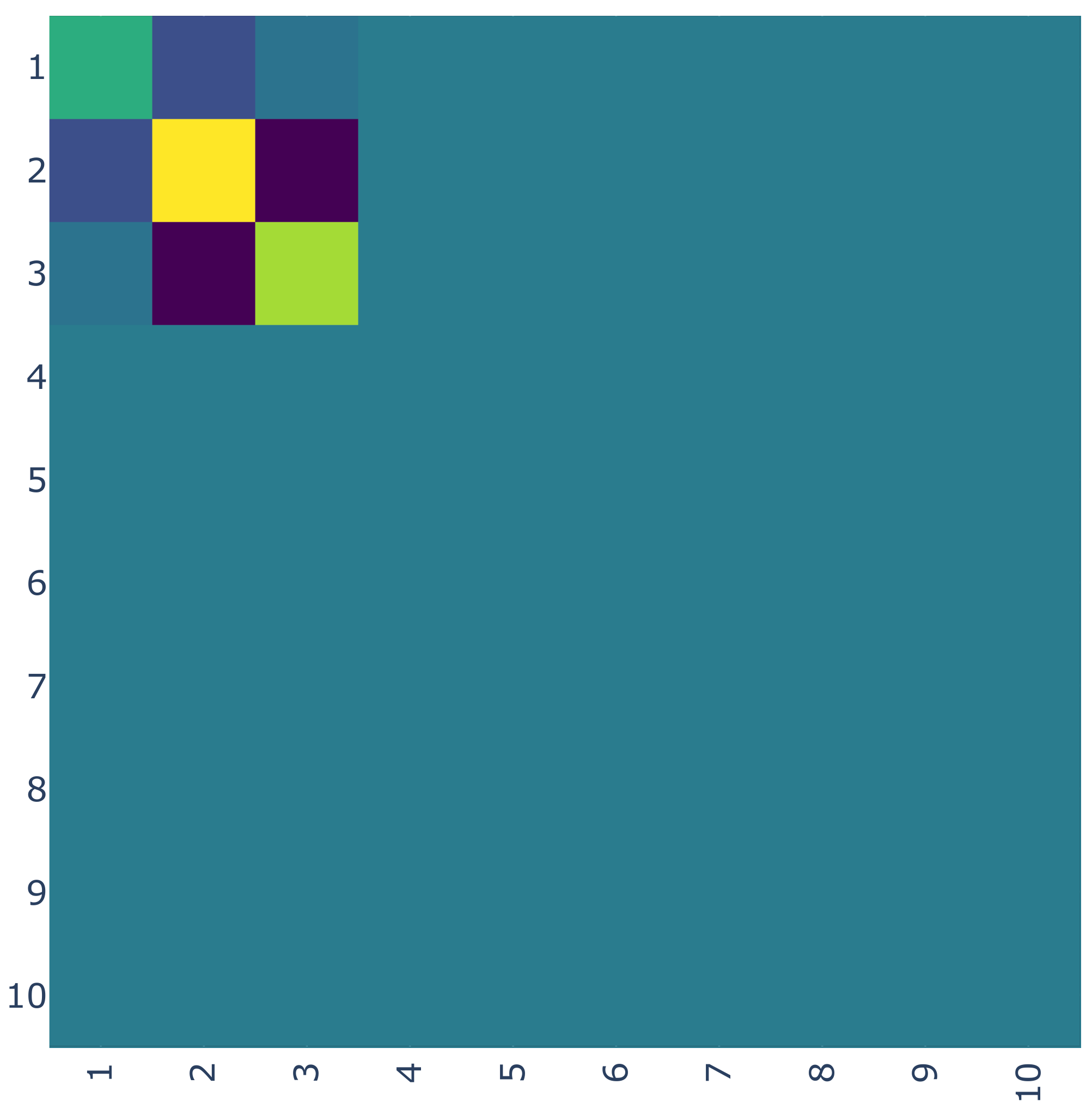}
\caption{The sample covariance matrix of  scores across 100 trials. }\label{fig:synth_covariance}
\end{figure}

The persistence diagram of the final convex-combination sliding window embedding is shown in Figure~\ref{fig:synth_pds}~(right), and the averaged path through the geometric $9$-simplex is visualized in Figure~\ref{fig:synth_path}.
The sample mean scores on the signals and standard deviations are shown in Table~\ref{table:scores}. 

\begin{table}[htbp]
\centering
\begin{tabular}{l|cccccccccc}
signal index & 1 & 2 & 3 & 4 & 5 & 6 & 7 & 8 & 9 & 10 \\
\hline
mean score & 0.317 & 0.296 & 0.387 & 0.000 & 0.000 & 0.000 & 0.000 & 0.000 & 0.000 & 0.000 \\
SD & 0.003 & 0.005 & 0.005 & 0.000 & 0.000 & 0.000 & 0.000 & 0.000 & 0.000 & 0.000 \\
\end{tabular}
\caption{Mean scores and standard deviations across 100  projected subgradient descents.}
\label{table:scores}
\end{table}

\begin{figure}[ht]
\centering
\includegraphics[width=.8\textwidth]{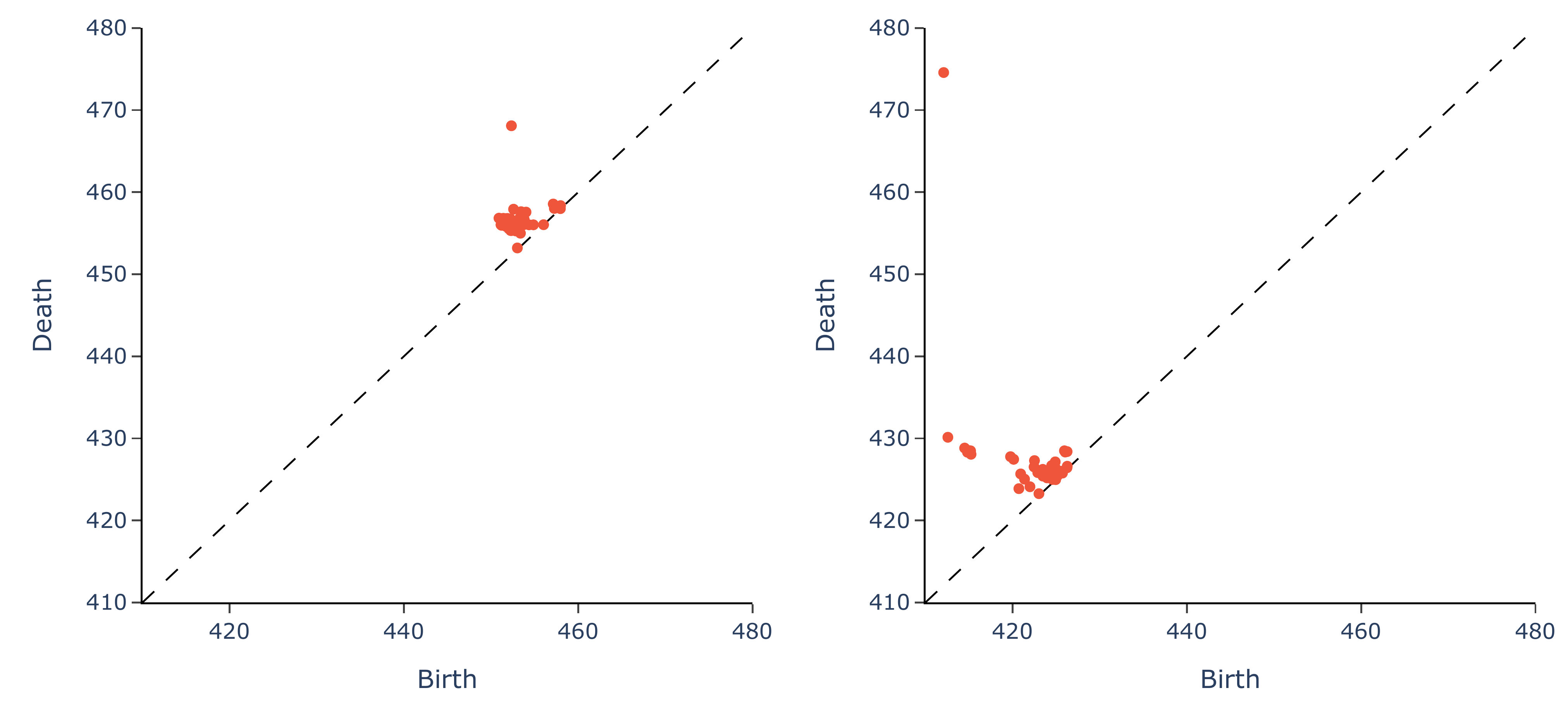}
\caption{The persistence diagram before optimization (left) and after optimization (right).}\label{fig:synth_pds}
\end{figure}

\begin{figure}[ht]
\centering
\includegraphics[width=0.9\textwidth]{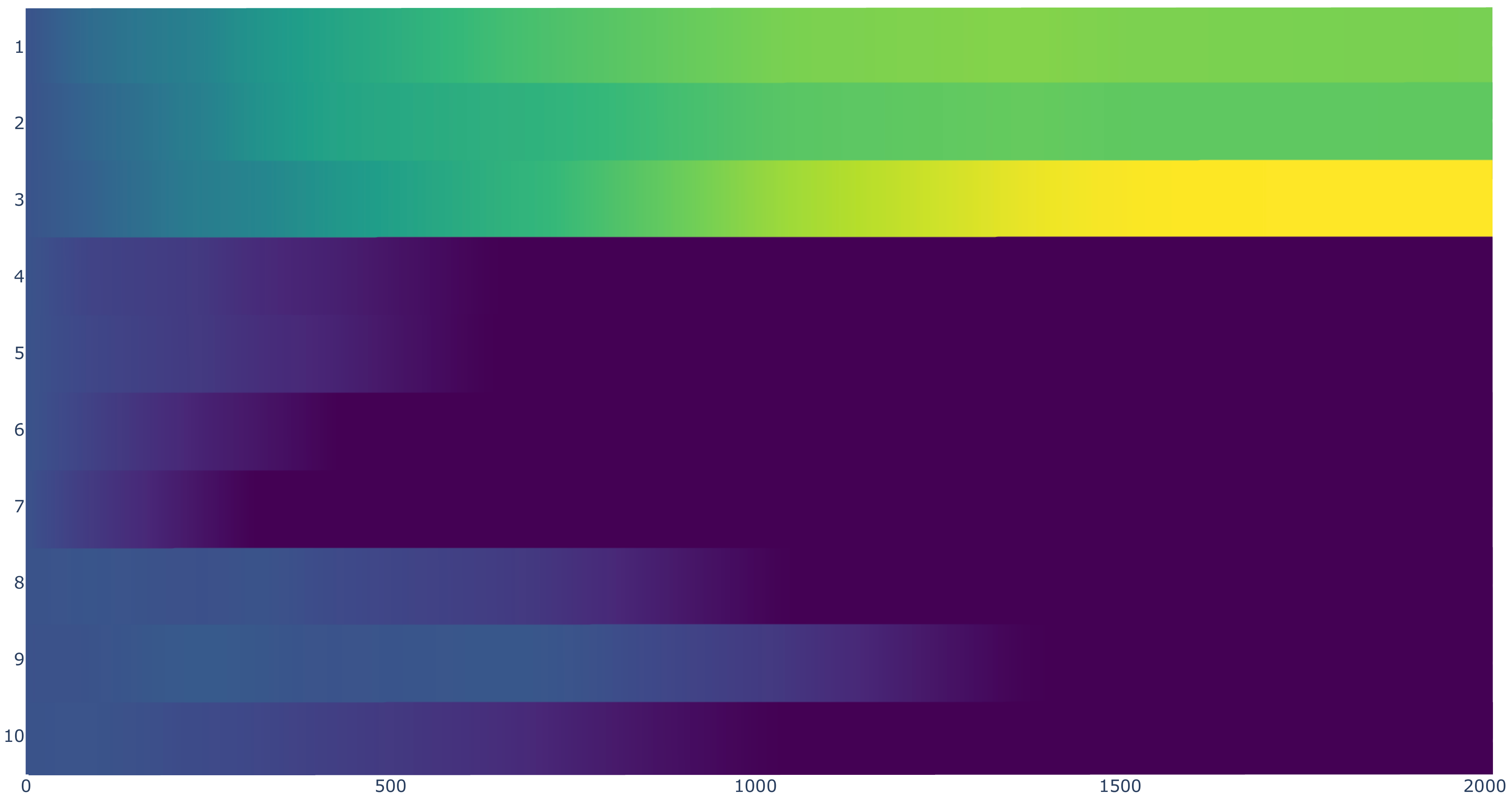}
\caption{A visualization of the average gradient path through the geometric $9$-simplex. 
The path starts at the barycenter (left edge) and ends (right edge) near the face spanned by the first three vertices.}\label{fig:synth_path}
\end{figure}

\subsection{Neuronal data}\label{ex:neuro} 

Finally, we apply our method to the scientific data that motivated our mathematical research.
The example in this section is based on neural activity within the model organism \emph{C.\thinspace elegans} obtained by Chaudhary et al.~\cite{chaudhary2021graphical}.
These organisms have been genetically modified to express GCaMP6s, a calcium-sensitive fluorescent protein that increases in fluorescence in response to calcium ion influx triggered by neuronal activation.
Because \emph{C.\thinspace elegans} is translucent, the intensity of this fluorescence provides an indirect measure of neuron-level activity.

For this dataset, the activity of $72$ neurons was recorded over a 278-second session at a resolution of one frame per second. 
During the recording, a microfluidic device was used to contain and apply \emph{E.\thinspace coli} OP50 to the nose tip of the animal to evoke chemosensory responses. 
This periodic stimulus (five seconds on, five seconds off) was applied starting at 100 seconds and ending at 180 seconds. 
Figure~\ref{fig:neuron_heatmap} shows a heatmap of the data set. 

\begin{figure}[ht]
\centering
\includegraphics[width=\textwidth]{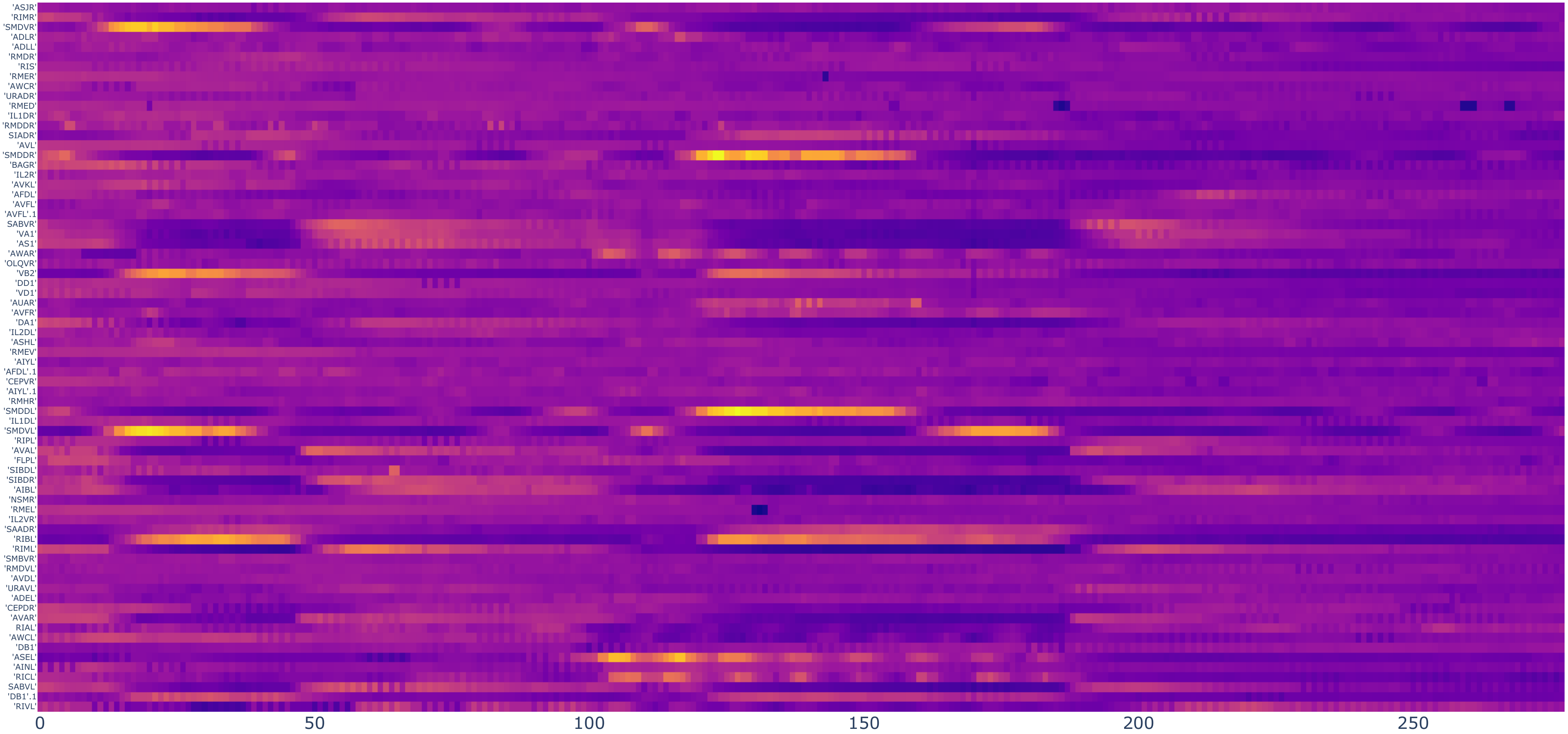}
\caption{Heatmap of the \emph{C.\thinspace elegans} neuronal data set. 
}\label{fig:neuron_heatmap}
\end{figure}

The sliding window embedding with window length $L=10$ yields a point cloud consisting of $269$ points in $\R^{720}$. 
This window length was chosen to align with the  $10$s period of the periodic stimulus. 
The augmented combinatorial simplex $K$ has dimension $268$. The optimization variable $v \in \abs{\Delta^{p-1}} = \abs{\Delta^{71}}$ represents a convex combination of the $72$ individual neuronal distance matrices.
The persistence diagram is displayed in Figure~\ref{fig:neuron_pds} (left). 
There are two large loops detected by persistence; indeed, the periodic stimulus applied throughout the middle third of the recording dramatically affects the trajectory of the orbit, forming the second loop. 
We therefore take $F$ to be the sum of the two largest lifetimes in the degree-$1$
persistence diagram, targeting the two dominant loops.

\begin{figure}[ht]
\centering
\includegraphics[width=.8\textwidth]{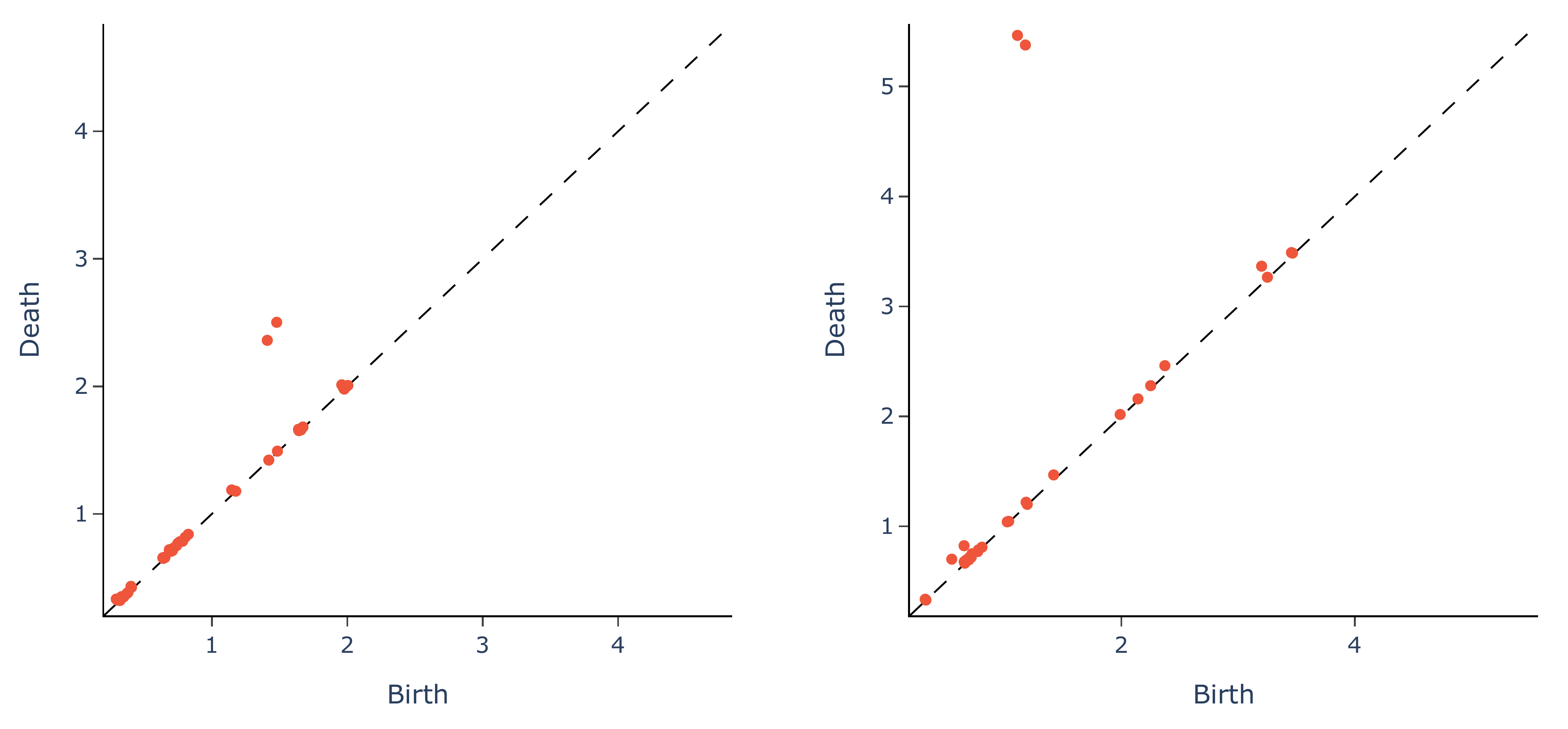}
\caption{The persistence diagram before optimization (left) and after optimization (right). 
}\label{fig:neuron_pds}
\end{figure}

We apply Algorithm~\ref{alg:stochastic-path} to optimize for the sum of the two longest lifetimes in the persistence diagram, using $1000$ steps with initial learning rate $\eta_0\approx 3\times 10^{-5}$, decay factor $d= 10^{-3}$, and gradient-noise scale $\sigma\approx 3.6$. Again $\sigma$ is chosen to be one tenth of the norm of the projected subgradient at the barycenter. With these choices the step length is $10^{-3}$ at the first iterate and decays to about half that value over the course of the descent. 
After performing $100$ trials, we average the resulting paths through the geometric $71$-simplex (Figure~\ref{fig:neuron_path}). 
The mean sample covariance is approximately $3.3\times 10^{-6}$.
The covariance matrix is displayed in Figure~\ref{fig:bio_covariance}.

\begin{figure}[ht]
\centering
\includegraphics[width=\textwidth]{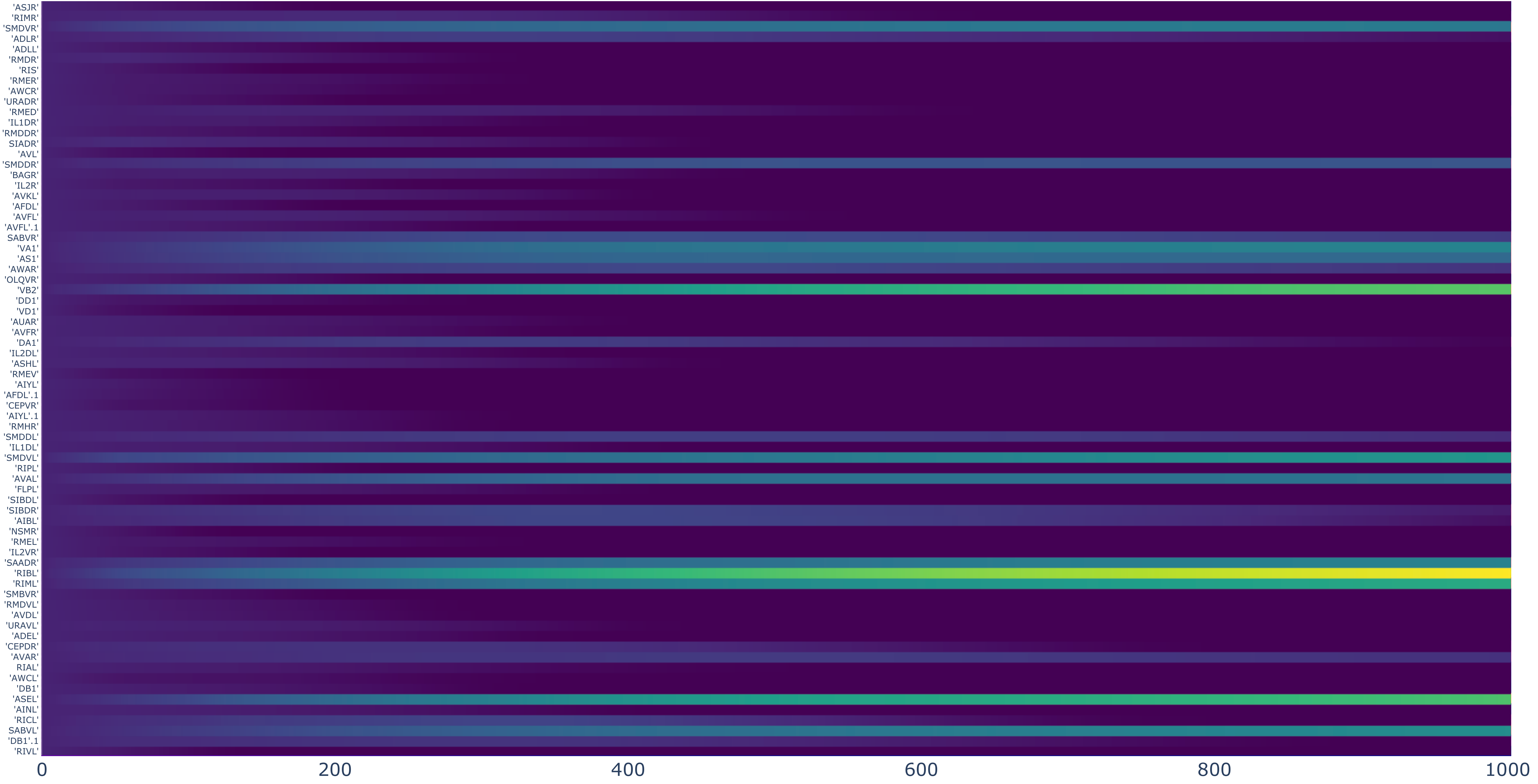}
\caption{A visualization of the sample mean gradient path through the geometric $71$-simplex. 
The path starts at the barycenter (left edge) and ends (right edge) close to the face given by the convex hull of the vertices corresponding to the neurons in \cref{fig:final_scores}.}\label{fig:neuron_path}
\end{figure}

\begin{figure}[ht]
\centering
\includegraphics[width=\textwidth]{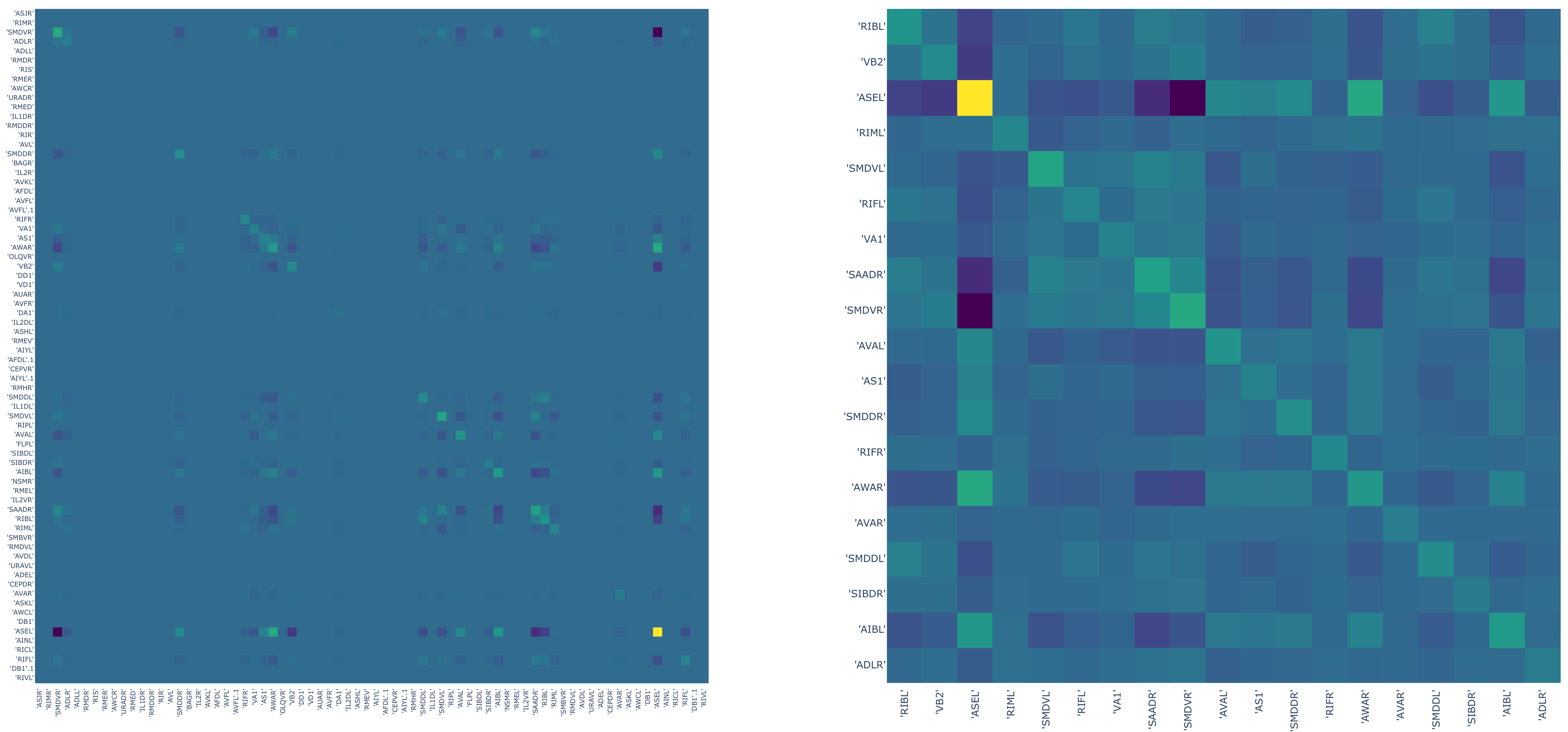}
\caption{The sample covariance matrix of scores across 100 trials (left), and the matrix restricted to neurons with a score greater than $10^{-3}$ (right). }\label{fig:bio_covariance}
\end{figure}

We find that $19$ of the $72$ neurons have a sample mean score greater than $10^{-3}$. 
These scores are plotted in Figure~\ref{fig:final_scores}. 
The most-highly scored neurons align with biologically significant components of the \emph{C.\thinspace elegans} nervous system, particularly those involved in sensory integration, motor control, and behavioral modulation. 
The top-scored neuron, RIBL, is a head interneuron known to integrate inputs from various sensory modalities and to contribute to locomotor regulation. 
Its prominent score suggests a central role in coordinating behavioral responses to environmental stimuli. 
Also highly ranked is ASEL, a chemosensory neuron specialized in detecting sodium ions, directly related to the periodic chemical stimulus used in the experiment. 
Several motor-related neurons score highly, including SMDVR, SMDVL, SMDDR, SMDDL, which are involved in head movement and posture, as well as VA1 and VB2, which belong to motor neuron classes responsible for backward and forward locomotion, respectively. 
The appearance of interneurons such as RIML, RIFL, and 
RIFR,
which participate in motor coordination and neuromodulatory pathways, indicates that the method selects not only sensory inputs but also downstream neurons involved in generating context-dependent behavior.

We also note that among the neurons with the highest score across the runs we observe many left/right pairs such as SMDVL/SMDVR, 
RIFL/RIFR,
SMDDL/SMDDR, and AVAL/AVAR. These neurons exhibit strongly correlated activity patterns, reflecting their anatomical symmetry and shared functional roles in motor control and sensory processing. Thus, this may be explained by the fact that their coordinated activity leads to similar contributions to the topological structure.

\begin{figure}[ht]
\centering
\includegraphics[width=.8\textwidth]{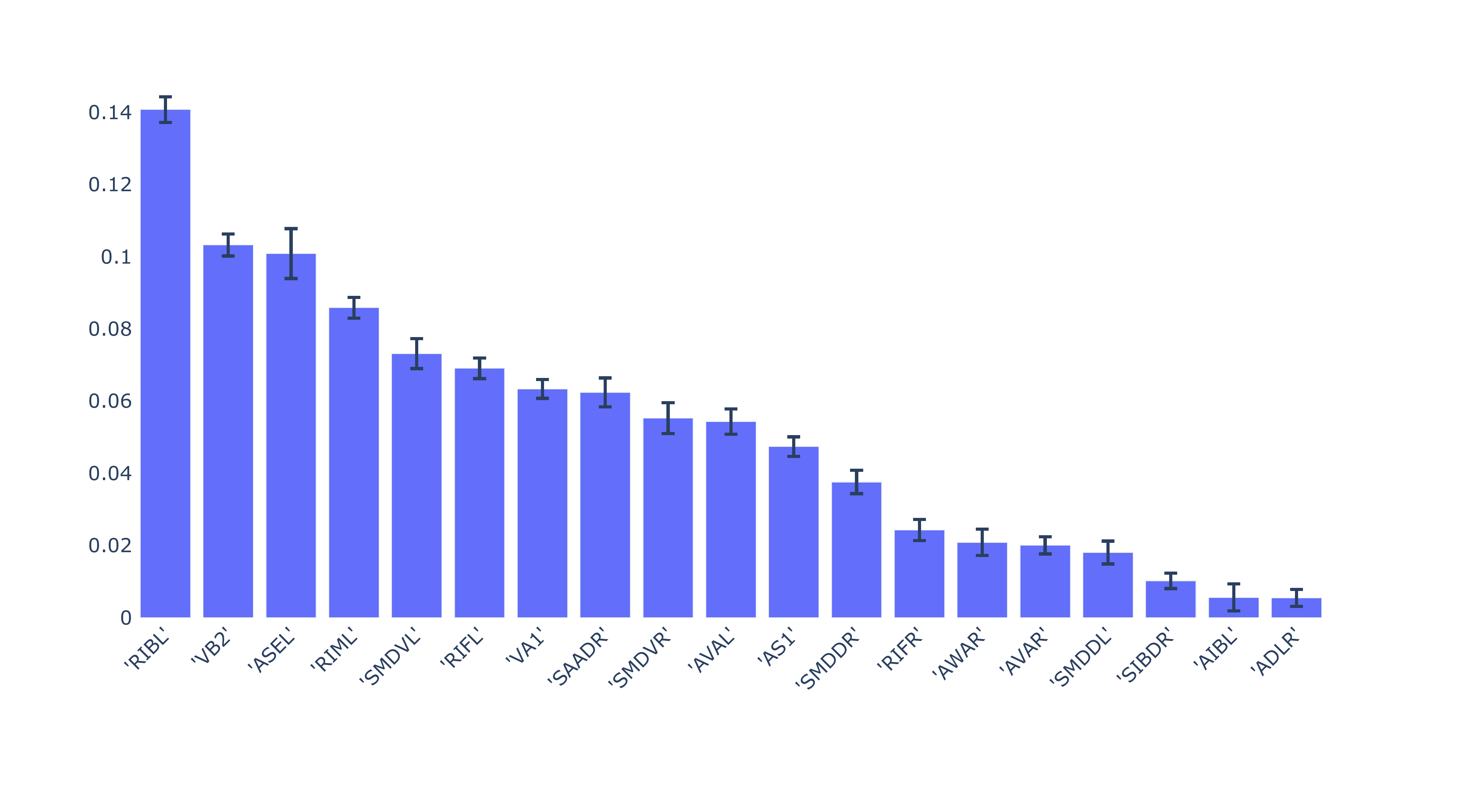}
\caption{The sample mean scores of the neurons with a score greater than $10^{-3}$, together with standard deviation error bars. 
}\label{fig:final_scores}
\end{figure}

To assess the robustness of our analysis with respect to changes in window length, we compared the sets of neurons with sample mean score greater than $10^{-3}$ across window lengths $L = 8, 9, 10, 11, 12$. See \cref{table:jaccard}.
The Jaccard index, defined to be the ratio of the cardinality of the intersection of two sets to the cardinality of their union, shows overlap across window lengths, suggesting that the results of this analysis are not particularly sensitive to modest variations in window length. 
\begin{table}[ht]
    \centering
    \begin{tabular}{l|ccccc}
Jaccard index & 8 & 9 & 10 & 11 & 12 \\
\hline
8  & 1.000 & 0.952 & 0.952 & 0.857 & 0.905 \\
9  &       & 1.000 & 1.000 & 0.900 & 0.950 \\
10 &       &       & 1.000 & 0.900 & 0.950 \\
11 &       &       &       & 1.000 & 0.947 \\
12 &       &       &       &       & 1.000 \\
\end{tabular}
    \caption{There is a high degree of overlap in the sets of neurons with sample mean score greater that $10^{-3}$ across window lengths $8$ through $12$.}
    \label{table:jaccard}
\end{table}

We note that while the neurons identified by our method appear active in the heatmap of Figure~\ref{fig:neuron_heatmap}, activity level and contribution to coordinated dynamics are distinct:\ the heatmap captures the amplitude and timing of individual neuronal responses, whereas the loop structure in Figure~\ref{fig:neuron_pds} reflects the geometry of the collective neural trajectory.

\section*{Discussion}

The two primary hyperparameters of our method are the window length $L$ and the objective function $F$.
When the period of the underlying dynamics is unknown, $L$ may be selected by computing persistence diagrams across a range of candidates; in our examples, $L$ was determined either by  identifying the one that maximized the quantity being optimized or by domain knowledge of the stimulus frequency.
The choice of $F$ is guided by the structure of the persistence diagram:\ in the \emph{C.~elegans} example, the presence of two prominent degree-$1$ features (Figure~\ref{fig:neuron_pds}) motivated our choice of $F$ as the sum of the two largest degree-$1$ lifetimes.

While our motivating examples focus on periodic and quasiperiodic dynamics, the method applies more broadly to any setting in which persistent homology provides a meaningful summary of the data, and any homological degree may be targeted.
A systematic comparison with other approaches to multivariate time series feature selection is an interesting direction for future work.
Optimizing the speed of our optimization would also be good direction for future work~\cite{MR4753555}.

\section*{Code availability}
Code to reproduce the numerical experiments in this paper is available at
\url{https://github.com/J-Bush/TFS}.

\section*{Acknowledgments}
We are grateful to Hang Lu and Sihoon Moon for many helpful discussions regarding the physiology of \emph{C.\thinspace elegans}. 
We are also grateful to Zev Woodstock for helpful discussions regarding optimization.
We thank the referees whose suggestions led to many improvements.
This research was partially supported by the National Science
Foundation (NSF) grant DMS-2324353 and by the Southeast Center
for Mathematics and Biology, an NSF-Simons Research Center for Mathematics of Complex
Biological Systems, under National Science Foundation Grant No.\ DMS-1764406 and Simons Foundation Grant No.~594594.

\bibliographystyle{plain}
\bibliography{references}

\end{document}